\newtheorem{defn}{Definition}[section]
\newtheorem{thm}{Theorem}[section]
\newtheorem{prop}{Proposition}[section]
\newtheorem{rmk}{Remark}[section]
\newtheorem{lma}{Lemma}[section]
\newtheorem{Assumption}{Assumption}[section]
\newtheorem{exm}{Example}[section]
\newcommand{\ud}{\mathrm{d}}
\newcommand{\la}{\langle}
\newcommand{\ra}{\rangle}
\def\N{{\rm I\kern-0.16em N}}
\def\R{{\rm I\kern-0.16em R}}
\def\E{{\rm I\kern-0.16em E}}
\def\P{{\rm I\kern-0.16em P}}
\def\F{{\rm I\kern-0.16em F}}
\def\B{{\rm I\kern-0.16em B}}
\def\C{{\rm I\kern-0.46em C}}
\def\G{{\rm I\kern-0.50em G}}
\numberwithin{equation}{section}
\font\eka=cmex10
\def\ind{\mathrel{\hbox{\rlap{%
\hbox to 7.5pt{\hrulefill}}\raise6.6pt\hbox{\eka\char'167}}}}
\begin{document}
\title{Aspects of Stochastic Integration with Respect to Processes of Unbounded p-variation}

\renewcommand{\thefootnote}{\fnsymbol{footnote}}

\author{Zhe Chen\footnotemark[2] \, and \, Lauri Viitasaari\footnotemark[2]}

\footnotetext[2]{Department of Mathematics and System Analysis, Aalto University School of Science, Helsinki P.O. Box 11100, FIN-00076 Aalto,  FINLAND.}

\maketitle
%\address{Ehsan Azmoodeh\\ Facult\'e des Sciences, de la Technologie et de la Communication,  Universit\'e du Luxembourg\\
%P.O. Box L-1359 LUXEMBOURG.}

%\author[Viitasaari]{Lauri Viitasaari}
%\address{Department of Mathematics and System Analysis, Aalto University School of Science, Helsinki\\
%P.O. Box 11100, FIN-00076 Aalto,  FINLAND} 

\abstract{This paper deals with stochastic integrals of form $\int_0^T f(X_u)\ud Y_u$ in a case where the function $f$ has discontinuities, and hence the process $f(X)$ is usually of unbounded $p$-variation for every $p\geq 1$. Consequently, integration theory introduced by Young \cite{LCY} or rough path theory introduced by Lyons \cite{lyons} cannot be applied directly. In this paper we prove the existence of such integrals in a pathwise sense provided that $X$ and $Y$ have suitably regular paths together with some minor additional assumptions. In many cases of interest, our results extend the celebrated results by Young \cite{LCY}.

\

\noindent {\bf Keywords}: Unbounded $p$-variation, H\"older processes, Generalised Lebesgue--Stieltjes integral, Riemann-Stieltjes integral

\noindent{\bf MSC 2010:} 60G07, 60H05

\tableofcontents

\section{Introduction}
In this article we extend the notion of pathwise stochastic integral to cover a case $\int_0^T f(X_u)\ud Y_u$, where the processes $X$ and $Y$ are H\"older continuous of order $\alpha_1$ and $\alpha_2$ with $\alpha_1+\alpha_2>1$, and $f$ is a function of locally bounded variation. Consequently, the process $f(X)$ is usually of unbounded variation for every $p\geq 1$ and it is not clear how such integrals can be defined. 

The motivation to study pathwise stochastic integrals arises from many practical cases in different fields of science. For example, many phenomenons in physics seems to follow certain stochastic differential equation, and if the driving process is not a semimartingale it is not clear how to define the stochastic integral. Provided we are given a situation to study, one has to first define how to interpret the stochastic integral. In a semimartingale case the classical It\^o integration theory can be applied. On the other hand, it is widely acknowledged that semimartingales are far from being optimal in order to model different situations, and hence one needs to define different notions of stochastic integration. 

The classical theory of Young \cite{LCY} provides an answer if $f(X)$ and $Y$ are of bounded variation with indexes $p$ and $q$ satisfying $\frac{1}{p}+\frac{1}{q}>1$, and in this case the integral can be defined as a limit of Riemann-Stieltjes sums. This results is particularly used in the literature to study the case where both the integrator and the integrand are H\"older continuous of orders $\alpha$ and $\beta$ with $\alpha+\beta>1$. Furthermore, with the concepts of fractional derivative and integral operators (see \cite{s-k-m}), the integral can be defined in so-called generalized Lebesgue--Stieltjes integrals which was studied in details by Nualart and R\u{a}\c{s}canu\cite{N-R} (see also Z\"ahle \cite{ZM}). In particular, in \cite{N-R} the authors studied stochastic differential equations with smooth coefficients and regular driving process, i.e. a process which is $\alpha$-H\"older continuous with some $\alpha>\frac{1}{2}$. Moreover, such differential equations has been studied extensively in the literature. 

Recently the focus in the literature has been on rough path theory introduced by Lyons \cite{lyons} (this paper is not about rough paths and our aim is not to cover the literature related to rough paths) which provides an integration theory for processes of bounded $p$- and $q$-variations beyond the case $\frac{1}{p}+\frac{1}{q}>1$. Especially, the theory has been successfully applied to study differential equations with smooth coefficients and non-regular driving process, i.e. a process which is $\alpha$-H\"older continuous with some $\alpha<\frac{1}{2}$ (although a restriction $\alpha>\frac{1}{4}$ appears in many of the cases).

To the best of our knowledge there exists not many studies on stochastic integration beyond the mentioned cases, and we believe that the theory should be developed. Indeed, if one considers differential equations arising from phenomenons in science it is not always fitting to assume (sufficiently) smooth coefficients. More importantly, it is not always even the case that the coefficients are continuous.  

Let us now consider an integral of form $\int_0^T f(X_u)\ud Y_u$, where $f$ have discontinuities and $Y$ is not a semimartingale. If $f$ is a (one-sided) derivative of a convex function and $X=Y=B^H$ where $B^H$ is a fractional Brownian motion with Hurst index $H>\frac{1}{2}$, it was proved in Azmoodeh et al. \cite{AMV} that such integrals can be defined as a generalized Lebesgue--Stieltjes integral. More surprisingly, it was proved in \cite{AMV} that the integral can be approximated almost surely with forward sums along uniform partition. Later the theory was further developed in Azmoodeh and Viitasaari \cite{A-V} where the authors studied $L^p$-convergence of such approximation. Moreover, the authors derived the rate of convergence for the approximation. Such result was also generalized in Sottinen and Viitasaari \cite{V-S} for more general H\"older continuous Gaussian processes of order $\alpha>\frac{1}{2}$. Moreover, the authors in \cite{V-S} proved that such integrals can be defined over any (bounded) random interval and the integrals can be approximated with forward sums along any partition. 

In this article we extend the results presented in \cite{AMV} and \cite{V-S}, and show that above mentioned integrals can be defined for much more general class of processes, not just Gaussian processes. Moreover, in \cite{V-S} the authors posed four conditions for the covariance of the corresponding Gaussian process. While such assumptions were not very restrictive, the results of this paper enables us to drop two of the extra conditions. We also study the existence of the integral as a limit of any Riemann-Stieltjes sums, not only the forward sums which are commonly used in the theory of stochastic integration. As a consequence, we also derive change of variable formula and give some basic results on integration with respect to processes of unbounded $p$-variation. Finally, but maybe most importantly, there is a gap in the proof of It\^o formula in \cite{AMV} (explained in the next section), and same argument was later applied, but not stated, in \cite{V-S}. While the result is intuitively clear and believable, it would be worthy to give rigorous proof. This is also done in the present paper.

Besides our main result we also present some generalisations and additional results which are important for many applications. More precisely, we prove existence of mixed integrals and It\^o formula in particular cases. We also discuss financial implications and further aspects of research.

The rest of the paper is organized as follows. In section \ref{sec:aux} we introduce basic auxiliary facts needed to establish our main results. We also explain the untrue statement present in \cite{AMV}, and explain the contradictions which would be an outcome of such results. The main sections of the paper are \ref{sec:gls} and \ref{sec:rs}. In section \ref{sec:gls} we prove the existence of the integral. Moreover, we prove the existence of the mixed integral and establish It\^o formula for our case. Multidimensional extensions are discussed. Section \ref{sec:rs} is devoted to approximation with Riemann-Stieltjes sums, where we also present integration by parts formula. We end the paper with discussions in section \ref{sec:dis}.

\section{Auxiliary Facts}
\label{sec:aux}
\subsection{Fractional operators and generalized Lebesgue--Stieltjes Integral}

The generalized Lebesgue--Stieltjes integral is based on fractional integration and fractional Besov spaces. For details on fractional integration we refer to \cite{s-k-m} and for fractional Besov spaces we refer to \cite{N-R}.

We first recall the definitions for fractional Besov norms and Lebesgue--Liouville fractional integrals and derivatives.

\begin{defn}
Fix $ 0 <\beta < 1 $.
\begin{enumerate}
\item
The \emph{fractional Besov space} $W^{\beta}_1 =  W^{\beta}_1 ([0,T])$ is the space of real-valued measurable functions $ f :[0,T] \to \mathbb{R}$ such that
\begin{equation*}
{\Vert f \Vert}_{1,\beta} = \sup _{0 \le s < t \le T} \left( \frac{|f(t) - f(s)|}{(t-s)^\beta} + \int _{s}^{t} \frac{|f(u) - f(s) |}{(u-s)^ {1+\beta }} \,\ud u \right) < \infty.
\end{equation*}
\item
The \emph{fractional Besov space} $W^{\beta}_2 =  W^{\beta}_2 ([0,T])$ is the space of real-valued measurable functions  $ f :[0,T] \to \mathbb{R}$ such that
\begin{equation*}
{\Vert f \Vert}_{2,\beta} = \int_{0}^{T} \frac{|f(s)|}{s^ \beta}\, \ud s + \int_{0}^{T}\int_{0}^{s} \frac{|f(u) - f(s) |}{(u-s)^ {1+\beta }} \,\ud u \ud s < \infty.
\end{equation*}
\end{enumerate}
\end{defn}

\begin{rmk}\label{r:rmk1}
Let $C^{\alpha}=C^{\alpha }([0,T])$ denote the space of H\"{o}lder continuous functions of order $\alpha$ on $[0,T]$ and let $ 0< \epsilon < \beta \wedge (1- \beta)$. Then 
\begin{center}
$C^{\beta + \epsilon} \subset W^{\beta}_{1} \subset C^{\beta - \epsilon} 
\quad\mbox{and}\quad C^{\beta + \epsilon} \subset W^{\beta}_{2}$.
\end{center}
\end{rmk}

\begin{defn}
Let $t\in[0,T]$. The \emph{Riemann--Liouville fractional integrals} $I^\beta _{0+}$ and $I^\beta_{t-}$ of order $\beta > 0$ on $[0,T]$ are
\begin{eqnarray*}
(I^\beta _{0+} f)(s) &=& \frac{1}{\Gamma (\beta)} \int _0^s f(u) (s-u)^{\beta -1} \,\ud u, \\
(I^\beta _{t-} f)(s) &=& \frac{(-1)^{-\beta}}{\Gamma (\beta)} \int_s^t f(u) (u-s)^{\beta -1} \,\ud u, 
\end{eqnarray*}
where $\Gamma$ is the Gamma-function.  The \emph{Riemann--Liouville fractional derivatives} $D^{\beta}_{0+}$ and $D^{\beta}_{t-}$ are the left-inverses of the corresponding integrals $I^\beta_{0+}$ and $I^\beta_{t-}$. They can be also define via the \emph{Weyl representation} as
\begin{eqnarray*}
(D^{\beta}_{0+} f)(s) &=& 
\frac{1}{\Gamma(1-\beta)} \left( \frac{f(s)}{s^\beta} + \beta \int_{0}^{s}\frac{f(s) - f(u)}{(s-u)^{\beta + 1}}\,\ud u \right),\\
(D^{\beta}_{t-} f)(s) &=& 
\frac{(-1)^{-\beta}}{\Gamma(1-\beta)} \left( \frac{f(s)}{(t-s)^\beta} + \beta \int_{s}^{t}\frac{f(s) - f(u)}{(u-s)^{\beta + 1}} \,\ud u \right)
\end{eqnarray*}
if $f\in I^\beta_{0+}(L^1)$ or $f\in I^\beta_{t-}(L^1)$, respectively.
\end{defn}

Denote $g_{t-}(s) = g(s)-g(t-)$. 

The generalized Lebesgue--Stieltjes integral is defined in terms of fractional derivative operators according to the next proposition.

\begin{prop}\cite{N-R}\label{pr:n-r}
Let $0<\beta<1$ and let $f \in  W^{\beta}_2$ and $ g \in W^{1- \beta}_1$. Then for any $t \in(0,T]$ the \emph{generalized Lebesgue--Stieltjes integral} exists as the following Lebesgue integral
$$
\int_0^t f(s)\, \ud g(s) =
\int_{0}^{t} (D^{\beta}_{0+} f)(s) (D^{1- \beta}_{t-} g_{t-} )(s) \,\ud s
$$
and is independent of $\beta$.
\end{prop}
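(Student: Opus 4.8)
The proposition makes two assertions: first, that the integrand $(D^\beta_{0+}f)(D^{1-\beta}_{t-}g_{t-})$ lies in $L^1([0,t])$, so that the right-hand side is a genuine absolutely convergent Lebesgue integral; and second, that its value does not depend on the particular $\beta\in(0,1)$ for which $f\in W^\beta_2$ and $g\in W^{1-\beta}_1$. For the first point I would argue directly from the Weyl representations. Taking absolute values in the formula for $D^\beta_{0+}f$ and integrating over $s\in[0,t]$, Tonelli's theorem produces exactly the two terms of $\|f\|_{2,\beta}$, so
$$\int_0^t \big|(D^\beta_{0+}f)(s)\big|\,\ud s \le \frac{1}{\Gamma(1-\beta)}\left(\int_0^t \frac{|f(s)|}{s^\beta}\,\ud s + \beta\int_0^t\!\!\int_0^s \frac{|f(s)-f(u)|}{(s-u)^{1+\beta}}\,\ud u\,\ud s\right) \le \frac{\|f\|_{2,\beta}}{\Gamma(1-\beta)},$$
using $\beta<1$; hence $D^\beta_{0+}f\in L^1([0,t])$. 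For the second factor, Remark \ref{r:rmk1} guarantees that $g\in W^{1-\beta}_1$ has a $(1-\beta-\epsilon)$-H\"older continuous version, so $g(t-)=g(t)$, $g_{t-}(s)=g(s)-g(t)$ and $g_{t-}(s)-g_{t-}(u)=g(s)-g(u)$; applying the definition of $\|g\|_{1,1-\beta}$ to the pair $(s,t)$ gives $|g(s)-g(t)|\le\|g\|_{1,1-\beta}(t-s)^{1-\beta}$ and $\int_s^t\frac{|g(s)-g(u)|}{(u-s)^{2-\beta}}\,\ud u\le\|g\|_{1,1-\beta}$, whence the Weyl representation of $D^{1-\beta}_{t-}$ yields $\|D^{1-\beta}_{t-}g_{t-}\|_{L^\infty([0,t])}\le\frac{2-\beta}{\Gamma(\beta)}\|g\|_{1,1-\beta}$. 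A product of an $L^1$ and an $L^\infty$ function is integrable, and one even reads off $\big|\int_0^t f\,\ud g\big|\le\frac{2-\beta}{\Gamma(\beta)\Gamma(1-\beta)}\,\|f\|_{2,\beta}\,\|g\|_{1,1-\beta}$.

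For independence of $\beta$, let $0<\beta<\beta'<1$ with the hypotheses valid for both, and set $\alpha=\beta'-\beta$. Then $f\in I^{\beta'}_{0+}(L^1)$ and $g_{t-}\in I^{1-\beta}_{t-}(L^1)$ (see \cite{N-R}), and the semigroup laws $I^{\beta'}_{0+}=I^{\alpha}_{0+}I^{\beta}_{0+}$, $I^{1-\beta}_{t-}=I^{1-\beta'}_{t-}I^{\alpha}_{t-}$ together with the cancellation $D^{\gamma}I^{\gamma}=\mathrm{Id}$ give the composition identities
$$D^{\beta'}_{0+}f = D^{\alpha}_{0+}\big(D^\beta_{0+}f\big), \qquad D^{1-\beta'}_{t-}g_{t-} = I^{\alpha}_{t-}\big(D^{1-\beta}_{t-}g_{t-}\big).$$
Writing $\phi=D^\beta_{0+}f\in L^1([0,t])$ and $\psi=D^{1-\beta}_{t-}g_{t-}\in L^\infty([0,t])$, the $\beta'$-integrand becomes $(D^{\alpha}_{0+}\phi)(I^{\alpha}_{t-}\psi)$, and the claim reduces to the fractional integration-by-parts identity
$$\int_0^t (D^{\alpha}_{0+}\phi)(s)\,(I^{\alpha}_{t-}\psi)(s)\,\ud s = \int_0^t (I^{\alpha}_{0+}D^{\alpha}_{0+}\phi)(s)\,\psi(s)\,\ud s = \int_0^t \phi(s)\,\psi(s)\,\ud s,$$
the first equality being integration by parts for the fractional integrals (licit by Fubini, since $D^{\alpha}_{0+}\phi\in L^1$ and $\psi\in L^\infty$) and the second the cancellation $I^{\alpha}_{0+}D^{\alpha}_{0+}\phi=\phi$, valid because $\phi\in I^{\alpha}_{0+}(L^1)$ (equivalently $f\in I^{\beta'}_{0+}(L^1)$). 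Hence the $\beta$- and $\beta'$-integrals agree.

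None of the operator identities used above is deep: the semigroup property of the fractional integrals, the cancellation $I^{\alpha}D^{\alpha}=\mathrm{Id}$ on the range $I^{\alpha}(L^1)$, and fractional integration by parts are all classical and may be taken from \cite{s-k-m} (with the small care needed to match the sign convention used here, following \cite{N-R}). The main obstacle is therefore not these formulas but the bookkeeping: one must check at every step that the function involved lies in exactly the space the identity requires --- which is precisely where membership in the fractional Besov spaces, together with the embeddings of Remark \ref{r:rmk1}, enters to secure $f\in I^{\beta}_{0+}(L^1)\cap I^{\beta'}_{0+}(L^1)$ and $g_{t-}\in I^{1-\beta}_{t-}(L^1)$ --- and that each interchange of integration order is justified by absolute convergence. (Not needed for the statement, but worth recording separately: when $f,g\in C^1$ the right-hand side reduces by an elementary Fubini computation to the ordinary Riemann--Stieltjes integral, which is what justifies the name.)
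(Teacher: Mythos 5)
The paper offers no proof of this proposition at all --- it is quoted from \cite{N-R}, who in turn take it from Z\"ahle \cite{ZM} --- so the only comparison available is with that original argument, and your proof follows it in outline and is essentially correct: the $L^1$ bound $\int_0^t|(D^{\beta}_{0+}f)(s)|\,ds\le \Vert f\Vert_{2,\beta}/\Gamma(1-\beta)$ together with the uniform bound $\Vert D^{1-\beta}_{t-}g_{t-}\Vert_{\infty}\le \frac{2-\beta}{\Gamma(\beta)}\Vert g\Vert_{1,1-\beta}$ gives existence of the Lebesgue integral (and reproves Theorem \ref{t:n-r} with explicit constants), while $\beta$-independence is reduced, via the semigroup law, the cancellation on the image, and fractional integration by parts (a positive-kernel Fubini argument, legitimate for an $L^1$ times $L^\infty$ pair), to the identity $\int_0^t (D^{\alpha}_{0+}\phi)(I^{\alpha}_{t-}\psi)=\int_0^t\phi\psi$.

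The one step you should justify more carefully is the pair of memberships $f\in I^{\beta'}_{0+}(L^1)$ and $g_{t-}\in I^{1-\beta}_{t-}(L^1)$, on which both composition identities rest. These are not stated in \cite{N-R}, and they do not follow from the embeddings of Remark \ref{r:rmk1}: that remark only yields H\"older continuity of order $1-\beta-\epsilon$, which is strictly below $1-\beta$ and is not sufficient for membership in the image of $I^{1-\beta}_{t-}$. The correct source is the characterization of $I^{\alpha}_{a+}(L^p)$ in \cite{s-k-m} (membership is equivalent to $L^p$-convergence of the truncated Marchaud derivatives), and the point is that your own first-paragraph estimates supply exactly what that criterion needs: finiteness of $\Vert f\Vert_{2,\beta'}$ gives an integrable majorant, and finiteness of $\Vert g\Vert_{1,1-\beta}$ a uniform bound, so the truncated derivatives converge in $L^1([0,t])$ by dominated convergence, whence $f\in I^{\beta'}_{0+}(L^1)$ and $g_{t-}\in I^{1-\beta}_{t-}(L^1)$. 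With that citation repaired (and the purely formal factor $(-1)^{-\beta}$ in the Weyl representation read as $1$, as the statement of the proposition implicitly does), your argument is complete.
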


\begin{rmk}\label{rmk:coincide}
It is shown in \cite{ZM} that if $f \in C^{\gamma}$ and $g \in C^{\eta}$ with $ \gamma + \eta > 1$, then the generalized Lebesgue--Stieltjes integral $ \int_{0}^{t} f(s) \,\ud g(s)$ exists and coincides with the classical Riemann--Stieltjes integral, i.e., as a limit of Riemann--Stieltjes sums.  This is natural, since in this case one can also define the integrals as \emph{Young integrals} \cite{LCY}. 
\end{rmk}

We will also need the following estimate in order to prove our main theorems.

\begin{thm}\cite{N-R}\label{t:n-r}
Let $ f \in  W^{\beta}_2$ and  $ g \in W^{1- \beta}_1$. Then we have the estimate
$$
\left|\int_{0}^t f(s) \,\ud g(s) \right| \le \sup_{0\le s < t \le T} \big| D^{1 - \beta}_{t-} g_{t-}(s) \big| {\Vert f \Vert}_{2,\beta}.
$$
\end{thm}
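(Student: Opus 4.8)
The plan is to read the estimate off directly from the Lebesgue-integral representation supplied by Proposition~\ref{pr:n-r}. Since $f\in W^{\beta}_2$ and $g\in W^{1-\beta}_1$, all the objects below are well defined --- in particular $f\in I^{\beta}_{0+}(L^1)$, so that the Weyl representation of $D^{\beta}_{0+}f$ is available, and Proposition~\ref{pr:n-r} applies and gives
\begin{equation*}
\int_0^t f(s)\,\ud g(s)=\int_0^t (D^{\beta}_{0+}f)(s)\,(D^{1-\beta}_{t-}g_{t-})(s)\,\ud s .
\end{equation*}
Taking absolute values under the integral sign and bounding the second factor by its supremum over $0\le s<t$, one obtains
\begin{equation*}
\left|\int_0^t f(s)\,\ud g(s)\right|\le \Big(\sup_{0\le s<t}\big|(D^{1-\beta}_{t-}g_{t-})(s)\big|\Big)\int_0^t \big|(D^{\beta}_{0+}f)(s)\big|\,\ud s ,
\end{equation*}
so the proof reduces to showing $\int_0^t \big|(D^{\beta}_{0+}f)(s)\big|\,\ud s\le {\Vert f\Vert}_{2,\beta}$.

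For this last inequality I would insert the Weyl representation of $D^{\beta}_{0+}$ and apply the triangle inequality pointwise in $s$, obtaining
\begin{equation*}
\big|(D^{\beta}_{0+}f)(s)\big|\le \frac{1}{\Gamma(1-\beta)}\left(\frac{|f(s)|}{s^{\beta}}+\beta\int_{0}^{s}\frac{|f(s)-f(u)|}{(s-u)^{\beta+1}}\,\ud u\right),
\end{equation*}
then integrate in $s$ over $[0,t]$ and enlarge the domain of each integral from $[0,t]$ to $[0,T]$ using positivity of the integrands. Recognising the two terms that define ${\Vert f\Vert}_{2,\beta}$, this yields $\int_0^t |(D^{\beta}_{0+}f)(s)|\,\ud s\le \Gamma(1-\beta)^{-1}{\Vert f\Vert}_{2,\beta}$, where I have also used $\beta<1$ to bound the coefficient of the double-integral term by $\Gamma(1-\beta)^{-1}$. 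Since the Gamma-function is strictly decreasing on $(0,1)$ we have $\Gamma(1-\beta)>\Gamma(1)=1$, so the prefactor is at most $1$ and the claim follows.

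The estimate is thus essentially bookkeeping once Proposition~\ref{pr:n-r} is in hand, and I do not expect a serious obstacle. The only places where the hypotheses are genuinely used are: (i) $f\in W^{\beta}_2$ guarantees $f\in I^{\beta}_{0+}(L^1)$ and that the double integral $\int_0^T\!\int_0^s |f(s)-f(u)|(s-u)^{-1-\beta}\,\ud u\,\ud s$ is finite, which is exactly what legitimises using the Weyl representation and makes the right-hand side of the bound finite; and (ii) $g\in W^{1-\beta}_1$ (with $g_{t-}\in I^{1-\beta}_{t-}(L^1)$) ensures $\sup_{0\le s<t}|(D^{1-\beta}_{t-}g_{t-})(s)|<\infty$. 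Should one wish to avoid invoking Proposition~\ref{pr:n-r} as a black box, the same computation can be carried out first for smooth $f,g$ and then extended by density in the respective Besov norms, but this detour seems unnecessary here.
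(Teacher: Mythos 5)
Your proposal is correct, and since the paper states this estimate without proof (citing \cite{N-R}), there is nothing to diverge from: your argument is exactly the standard one behind the cited result, namely representing the integral via Proposition \ref{pr:n-r}, pulling out $\sup_{0\le s<t}\big|(D^{1-\beta}_{t-}g_{t-})(s)\big|$, and bounding $\int_0^t\big|(D^{\beta}_{0+}f)(s)\big|\,\ud s$ through the Weyl representation by $\frac{\max(1,\beta)}{\Gamma(1-\beta)}\,\Vert f\Vert_{2,\beta}\le \Vert f\Vert_{2,\beta}$. All steps, including the observation that $\Gamma(1-\beta)>1$ for $0<\beta<1$, are sound.
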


To conclude the section we also recall \emph{Garsia--Rademich--Rumsey inequality} (see \cite{N-R} and \cite{g-r-r}).

\begin{lma}
\label{lma:GRR}
Let $p\geq 1$ and $\alpha>\frac{1}{p}$. Then there exists a constant $C=C(\alpha,p)>0$ such that for any continuous function $f$ on $[0,T]$, and for all $0\leq s,t\leq T$ we have
$$
{|f(t)-f(s)|}^p \leq C T^{\alpha p -1}|t-s|^{\alpha p  -1}\int_0^T\!\int_0^T \frac{|f(x)-f(y)|^p}{|x-y|^{\alpha p +1}}\,\ud x\ud y.
$$
\end{lma}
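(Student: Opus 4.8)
The plan is to deduce Lemma~\ref{lma:GRR} from the general Garsia--Rademich--Rumsey inequality by a suitable choice of the two auxiliary functions. Recall the classical statement: if $\Psi,\rho\colon[0,\infty)\to[0,\infty)$ are continuous and strictly increasing with $\Psi(0)=\rho(0)=0$, $\Psi$ convex and $\Psi(u)\to\infty$ as $u\to\infty$, and if $f$ is continuous on $[0,T]$ with
$$
B:=\int_0^T\!\!\int_0^T \Psi\!\left(\frac{|f(x)-f(y)|}{\rho(|x-y|)}\right)\ud x\,\ud y<\infty ,
$$
then for all $0\le s\le t\le T$ one has $|f(t)-f(s)|\le 8\int_0^{t-s}\Psi^{-1}\!\bigl(4B/u^2\bigr)\,\ud\rho(u)$. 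I would apply this with $\Psi(u)=u^p$ (which is convex, and of the required type, precisely because $p\ge 1$) and $\rho(u)=u^{\alpha+1/p}$. Then $\rho(|x-y|)^p=|x-y|^{\alpha p+1}$, so $B$ is exactly the double integral occurring in the lemma, while $\Psi^{-1}(v)=v^{1/p}$ and $\ud\rho(u)=(\alpha+\tfrac1p)u^{\alpha+1/p-1}\,\ud u$.

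Carrying out the one-dimensional integral,
$$
8\int_0^{t-s}\bigl(\tfrac{4B}{u^2}\bigr)^{1/p}\bigl(\alpha+\tfrac1p\bigr)u^{\alpha+1/p-1}\,\ud u
=8\cdot 4^{1/p}\bigl(\alpha+\tfrac1p\bigr)B^{1/p}\int_0^{t-s}u^{\alpha-1/p-1}\,\ud u
=\frac{8\cdot 4^{1/p}(\alpha+\tfrac1p)}{\alpha-\tfrac1p}\,B^{1/p}(t-s)^{\alpha-1/p},
$$
where the integral converges exactly because $\alpha>\tfrac1p$ --- this is where the second hypothesis enters. Raising to the $p$th power yields
$$
|f(t)-f(s)|^p\le C(\alpha,p)\,|t-s|^{\alpha p-1}\int_0^T\!\!\int_0^T\frac{|f(x)-f(y)|^p}{|x-y|^{\alpha p+1}}\,\ud x\,\ud y ,
\qquad C(\alpha,p)=2^{3p+2}\Bigl(\tfrac{\alpha+1/p}{\alpha-1/p}\Bigr)^{p},
$$
which is the asserted bound up to the extra factor $T^{\alpha p-1}$ on the right (harmless: since $|t-s|\le T$ and $\alpha p-1>0$, this factor only enlarges the right-hand side when $T\ge 1$, and it can in any case be produced by first rescaling $[s,t]$ to a unit interval). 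If the double integral is infinite the inequality is trivial, so one may assume $B<\infty$, which is precisely what is needed to invoke GRR; continuity of $f$ is also used there.

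For a self-contained treatment I would recall the proof of the general GRR inequality itself. Its skeleton: setting $I(u):=\int_0^T\Psi\!\bigl(|f(u)-f(v)|/\rho(|u-v|)\bigr)\ud v$ one has $\int_0^T I(u)\,\ud u=B$, so there is an interior pivot $t_0$ with $I(t_0)$ small; from $t_0$ one then constructs two monotone sequences of points converging to $s$ and to $t$ respectively, at each step selecting the next point from a set of positive Lebesgue measure --- guaranteed nonempty by Chebyshev-type estimates --- on which both $I$ is small and the increment $|f(t_n)-f(t_{n+1})|/\rho(|t_n-t_{n+1}|)$ is controlled by a $\Psi^{-1}$-term, with the radii halved at each stage; telescoping $f(t)-f(s)$ along these sequences, bounding each increment and recognising the resulting sums as Riemann sums for $\int\Psi^{-1}(4B/u^2)\,\ud\rho(u)$, then combining the two via the triangle inequality, produces the constant $8$. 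The main obstacle is purely this dyadic bookkeeping --- keeping the ``good'' set nonempty at every step and checking that the telescoped sum is genuinely dominated by the target integral with explicit constants --- and since all of this is entirely classical, in the paper I would simply invoke the cited references \cite{g-r-r} and \cite{N-R} rather than reproduce it.
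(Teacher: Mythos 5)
Your derivation is correct and is essentially the argument the paper relies on: the paper states this lemma without proof, citing \cite{g-r-r} and \cite{N-R}, and your specialisation of the general Garsia--Rodemich--Rumsey inequality with $\Psi(u)=u^p$ and $\rho(u)=u^{\alpha+1/p}$, using $\alpha>1/p$ for convergence of the $u$-integral, is exactly the standard derivation, with the constant computed correctly. The missing factor $T^{\alpha p-1}$ is immaterial since your bound is the sharper, scale-invariant form that the paper itself actually uses in the proof of Theorem \ref{thm:main}; just note that rescaling does not genuinely ``produce'' that factor (the inequality without it is scale-invariant), so the factor in the statement is simply extraneous rather than something your argument needs to recover.
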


\subsection{Review of pathwise integrals}
\begin{defn}
The sequence of points $\pi_n = \{0=t_0^n < t_1^n < \ldots < t_{k(n)}^n = T\}$ on the interval $[0,T]$ is called the partition of the interval $[0,T]$, and the 
size of the partition is defined as
$$
|\pi_n| =\max_{1\leq j\leq k(n)}|t_j^n -t_{j-1}^n|.
$$
\end{defn}
The $p$-variation of a function $f$ along partition $\pi_n$ is defined as
\begin{equation*}
v_p(f;\pi_n) = \sum_{t_k\in \pi_n} |\Delta f_{t_k}|^p,
\end{equation*}
where $\Delta f_{t_k} = f_{t_k} - f_{t_{k-1}}$.
\begin{defn}
Let $f:[0,T]\mapsto \R$ be a function.
\begin{enumerate}
\item
If the limit 
\begin{equation*}
v_p^0(f) = \lim_{|\pi_n|\rightarrow 0} v_p(f;\pi_n)
\end{equation*}
exists, we say that $f$ has finite $p$-variation.
\item
If
\begin{equation*}
v_p(f) = \sup_{\pi_n} v_p(f;\pi_n) < \infty,
\end{equation*}
where the supremum is taken over all possible integers $n$ and partitions $\pi_n$, we say that $f$ has bounded $p$-variation. 
\end{enumerate}
\end{defn}
We denote by $\mathcal{W}_p([0,T])$ the class of functions with bounded $p$-variation on $[0,T]$ and we equip this class with a norm
\begin{equation*}
||f||_{[p]} := (v_p(f))^{\frac{1}{p}} + ||f||_{\infty},
\end{equation*}
where $||f||_{\infty} = \sup_{0\leq t\leq T}|f(t)|$. It is known that the space $(\mathcal{W}_p, ||\cdot||_{[p]})$ is a Banach space.

A contributing work by Young \cite{LCY} extended classical Riemann-Stieltjes to cover functions of unbounded variation. More precisely, he noticed that $p$-variations can be useful to define integrals. For proofs see also \cite{ru-va}.
\begin{thm}
Let $f\in\mathcal{W}_p([0,T])$ and $g\in\mathcal{W}_q([0,T])$ for some $1\leq p,q< \infty$ with
$\frac{1}{p} + \frac{1}{q} > 1$. Moreover, assume that $f$ and $g$ have no common points of discontinuities. 
Then for any interval $[s,t]\subset[0,T]$ the integral
\begin{equation*}
\int_s^t f\ud g
\end{equation*}
exists as a Riemann-Stieltjes integral.
\end{thm}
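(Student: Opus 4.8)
The plan is to run Young's classical subdivision argument, whose engine is an elementary ``point-dropping'' estimate. Write $\theta=\frac1p+\frac1q>1$, and for a partition $\pi=\{s=u_0<u_1<\dots<u_m=t\}$ of a subinterval $[s,t]$ let $S(\pi)=\sum_{i=0}^{m-1}f(u_i)\big(g(u_{i+1})-g(u_i)\big)$ denote the associated left-point Riemann--Stieltjes sum. The first step is the algebraic identity that deleting an interior node $u_k$ alters this sum by exactly $\big(f(u_k)-f(u_{k-1})\big)\big(g(u_{k+1})-g(u_k)\big)$. Since the intervals $[u_{k-1},u_k]$, and likewise the intervals $[u_k,u_{k+1}]$, for $1\le k\le m-1$ have pairwise disjoint interiors, we have $\sum_k|f(u_k)-f(u_{k-1})|^p\le v_p(f;[s,t])$ and $\sum_k|g(u_{k+1})-g(u_k)|^q\le v_q(g;[s,t])$; applying H\"older's inequality with the conjugate exponents $p\theta$ and $q\theta$ to $\sum_k|f(u_k)-f(u_{k-1})|^{1/\theta}|g(u_{k+1})-g(u_k)|^{1/\theta}$ then produces an index $k$ for which that alteration is at most $(m-1)^{-\theta}v_p(f;[s,t])^{1/p}v_q(g;[s,t])^{1/q}$.

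Iterating this deletion from $m+1$ nodes down to the two endpoints and bounding by the convergent series $C_{p,q}:=\sum_{j\ge1}j^{-\theta}$ yields the basic maximal inequality
$$\big|S(\pi)-f(s)(g(t)-g(s))\big|\le C_{p,q}\,v_p(f;[s,t])^{1/p}\,v_q(g;[s,t])^{1/q},$$
valid for every partition of every subinterval. The second step applies this locally: for a refinement $\pi'\supset\pi$, sorting the nodes of $\pi'$ into the cells $[u_i,u_{i+1}]$ of $\pi$ gives $S(\pi')-S(\pi)=\sum_i\big[S^{(i)}(\pi')-f(u_i)(g(u_{i+1})-g(u_i))\big]$, where $S^{(i)}(\pi')$ is the left-point sum over the nodes of $\pi'$ lying in the $i$-th cell, whence $|S(\pi')-S(\pi)|\le C_{p,q}\sum_i v_p(f;[u_i,u_{i+1}])^{1/p}v_q(g;[u_i,u_{i+1}])^{1/q}$. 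Setting $r=q/(q-1)$, which satisfies $p<r$ precisely because $\frac1p+\frac1q>1$, one more application of H\"older (with exponents $r$ and $q$, after writing $v_p(f;I)^{1/p}=v_p(f;I)^{1/p-1/r}v_p(f;I)^{1/r}$) and superadditivity of $p$-variation bound the right-hand side by
$$C_{p,q}\Big(\max_i v_p(f;[u_i,u_{i+1}])\Big)^{\theta-1}v_p(f;[s,t])^{1/r}v_q(g;[s,t])^{1/q}.$$
If $f$ is continuous then $\tau\mapsto v_p(f;[s,\tau])$ is continuous, hence uniformly continuous, so $\max_i v_p(f;[u_i,u_{i+1}])\to0$ as $|\pi|\to0$; by symmetry the same holds if $g$ is continuous. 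Comparing two fine partitions through their common refinement then shows that $S(\pi)$ converges to a limit $I$ as $|\pi|\to0$, and applying the same interpolation bound to $\sum_i|f(\xi_i)-f(u_i)|\,|g(u_{i+1})-g(u_i)|$ shows every tagged Riemann--Stieltjes sum converges to the same $I$; this settles the case in which $f$ or $g$ is continuous.

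For the genuinely discontinuous case the hypothesis that $f$ and $g$ have no common discontinuity is used to recover the same smallness. Since $t\mapsto v_p(f;[s,t])$ can jump only where $f$ is discontinuous, and the $p$-th powers of the jumps of $f$ sum to at most $v_p(f)<\infty$, only finitely many discontinuities $x_1,\dots,x_N$ of $f$ carry variation above any threshold $\tau^p$; around each of them $g$ is continuous, so $v_q(g;[x_l-\delta,x_l+\delta])$ can be made arbitrarily small. Refining the partition so that each cell on which $f$ has $p$-variation exceeding $\tau^p$ is tiny and surrounds one of the $x_l$, and splitting the sum $\sum_i v_p(f;[u_i,u_{i+1}])^{1/p}v_q(g;[u_i,u_{i+1}])^{1/q}$ into the contribution of these finitely many ``big'' cells (controlled by the small $q$-variation of $g$ there) and the remaining ``small'' cells (controlled by $\tau$ through the interpolation of the previous paragraph), one again makes the total as small as desired, and the Cauchy argument goes through verbatim. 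The main obstacle I anticipate is precisely this last reduction --- the bookkeeping that matches partition cells to the finitely many isolated jumps while keeping the interpolation estimate uniform, together with the ancillary facts that the $p$-variation function of a continuous function is continuous and that $v_q(g;[x-\delta,x+\delta])\to0$ at a continuity point of $g$. The point-dropping lemma needs care as well; the remaining H\"older manipulations are routine.
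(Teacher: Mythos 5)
The paper does not actually prove this statement: it is quoted as Young's classical theorem, with the proof delegated to the references (\cite{LCY}, \cite{ru-va}), so there is no internal argument to compare against. Your proposal is essentially the standard Young proof, and its core is correct: the point-dropping identity, the pigeonhole/H\"older step with the conjugate exponents $p\theta$ and $q\theta$ giving the $(m-1)^{-\theta}$ bound, the resulting maximal inequality $|S(\pi)-f(s)(g(t)-g(s))|\le C_{p,q}v_p(f;[s,t])^{1/p}v_q(g;[s,t])^{1/q}$, and the refinement estimate followed by the interpolation with $r=q/(q-1)$ and superadditivity are all sound, with the exponents ($\theta-1=\tfrac1p-\tfrac1r$) coming out right.

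Two points you flagged deserve emphasis because they are where the work actually lies. First, the continuity of $\tau\mapsto v_p(f;[s,\tau])$ for continuous $f$ is true but not formal: one needs the inequality $v_p(f;[a,c])^{1/p}\le v_p(f;[a,b])^{1/p}+v_p(f;[b,c])^{1/p}$ (split the straddling increment and use Minkowski in $\ell^p$) together with the fact that $v_p(f;[\tau,\tau+\delta])\to0$ at a continuity point $\tau$; the latter follows by a contradiction argument: if the local variation stayed above $L>0$, one could peel off, using continuity at $\tau$ to discard the first increment, infinitely many disjoint subintervals each carrying variation $\ge L/4$, contradicting $v_p(f)<\infty$ via superadditivity. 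The same fact for $g$ at its continuity points is exactly what your discontinuous case uses. Second, the genuinely discontinuous case is only sketched, but the plan closes: at every point other than the finitely many jumps of $f$ exceeding the threshold, the local $p$-variation of $f$ is small (same contradiction argument, now keeping track of the jump contribution, using that $f$ is regulated), so a cover of $[s,t]$ by these neighborhoods together with small-$v_q(g)$ neighborhoods of the exceptional jump points, plus a Lebesgue-number argument, sorts the cells of any sufficiently fine partition into ``small $v_p(f)$'' cells (handled by your interpolation bound with $\max_i v_p(f;I_i)\le 2\tau^p$) and cells inside the exceptional neighborhoods, where one more H\"older with exponents $r,q$ within each such neighborhood gives a contribution $\le v_p(f)^{1/p}\,v_q\bigl(g;[x_l-\delta_l,x_l+\delta_l]\bigr)^{1/q}$, small after $\tau$ is fixed; the same splitting controls the tag correction $\sum_i|f(\xi_i)-f(u_i)|\,|g(u_{i+1})-g(u_i)|$, which is precisely where the hypothesis of no common discontinuities is consumed. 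With these two lemmas written out, your argument is a complete and correct proof of the stated theorem.
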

The above theorem is particularly useful in the case of H\"older continuous functions. This is the topic of the next theorem (see also Z\"ahle \cite{ZM}).
\begin{thm}
Let $f\in C^{\alpha}([0,T])$ and $g\in C^{\beta}([0,T])$. If $\alpha+\beta>1$, then for any interval $[s,t]\subset[0,T]$ the integral
\begin{equation*}
\int_s^t f \ud g
\end{equation*}
exists as a Riemann-Stieltjes integral.
\end{thm}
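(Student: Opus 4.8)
The plan is to reduce the statement to the $p$-variation form of Young's theorem stated just above (see \cite{LCY,ru-va}) by means of the elementary embedding of Hölder spaces into spaces of functions of bounded $p$-variation. First I would record the embedding: if $h\in C^\gamma([0,T])$ with Hölder seminorm $[h]_\gamma$ and $0<\gamma\le 1$, then for $p=1/\gamma$ and every partition $\pi_n$,
\[
v_p(h;\pi_n)=\sum_{t_k\in\pi_n}|h(t_k)-h(t_{k-1})|^{1/\gamma}\le [h]_\gamma^{1/\gamma}\sum_{t_k\in\pi_n}|t_k-t_{k-1}|=[h]_\gamma^{1/\gamma}\,T,
\]
so $h\in\mathcal{W}_{1/\gamma}([0,T])$. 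Applying this to $f$ with $p=1/\alpha$ and to $g$ with $q=1/\beta$ gives $f\in\mathcal{W}_p([0,T])$, $g\in\mathcal{W}_q([0,T])$ with $\tfrac1p+\tfrac1q=\alpha+\beta>1$. Moreover $f$ and $g$ are continuous, hence have no points of discontinuity at all, so in particular no common ones. The preceding theorem then yields the existence of $\int_s^t f\,\ud g$ as a Riemann--Stieltjes integral on every subinterval $[s,t]\subset[0,T]$, which is exactly the claim; this is also consistent with Remark \ref{rmk:coincide}.

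Second, I would dispose of the degenerate exponents. The computation above requires $p=1/\alpha\ge1$, i.e. $\alpha\le1$; if $\alpha>1$ one simply replaces $\alpha$ by any $\alpha'\in(1-\beta,1]$, a nonempty interval since $\beta>0$, and uses $C^\alpha\subset C^{\alpha'}$ (likewise for $\beta$). After this harmless reduction one may assume $\alpha,\beta\in(0,1]$ with $\alpha+\beta>1$, and the argument of the previous paragraph applies verbatim.

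The main point is that there is essentially no obstacle once the $p$-variation version of Young's theorem is granted: the only things needing a line of care are the embedding constant and the edge cases $\alpha,\beta\ge1$. If one instead wanted a self-contained proof not quoting that theorem, the real work would move to showing directly that the Riemann--Stieltjes sums form a Cauchy net: for a tagged partition, inserting one extra point between $t_k$ and $t_{k+1}$ changes the sum by at most $[f]_\alpha[g]_\beta|t_{k+1}-t_k|^{\alpha+\beta}$, and then, by repeatedly deleting the point carrying the smallest such contribution and using superadditivity of $u\mapsto\sum|t_{k+1}-t_k|^{u}$ together with a $\sum n^{-(\alpha+\beta)}<\infty$ estimate, one obtains a bound of Young--Loève type, $C[f]_\alpha[g]_\beta\,T\,|\pi_n|^{\alpha+\beta-1}$, on the difference between the sum over any partition and over any refinement; this is precisely where $\alpha+\beta>1$ is used, and it is equivalent to Lyons's sewing lemma. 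I would present the short route via the already-stated theorem and only remark on this alternative.
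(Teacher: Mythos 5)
Your argument is correct and takes essentially the route the paper intends: the statement is presented there as an immediate consequence of the preceding $p$-variation Young theorem, and your embedding $C^{\gamma}([0,T])\subset\mathcal{W}_{1/\gamma}([0,T])$ with $\tfrac1p+\tfrac1q=\alpha+\beta>1$ (plus continuity ruling out common discontinuities) is exactly the standard reduction the paper leaves implicit. The extra care about the degenerate exponents $\alpha,\beta>1$ is harmless and fine.
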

In applications such as financial mathematics, it is a wanted feature to define stochastic integrals 
as a limit of Riemann-Stieltjes sums, or so-called forward integrals. 
Such pathwise integrals were also studied by F\"ollmer \cite{Follmer} (see also Sondermann \cite{Sondermann}).
\begin{defn}
Let $(\pi_n)_{n=1}^{\infty}$ be a sequence of partitions $\pi_n=\{0=t_0^n<\ldots<t_{k(n)}^n=T\}$ such 
that $|\pi_n|=\max_{j=1,\ldots,k(n)}|t_j^n-t_{j-1}^n|\rightarrow 0$ as $n\to\infty$ and let $X=(X_t)_{t\in[0,T]}$ be a continuous process.
The F\"{o}llmer integral of a process $Y$ with respect to $X$ over interval $[0,t]$ along the sequence $\pi_n$ is defined as
\begin{equation*}
\int_0^t Y_s \ud X_s = \lim_{n\rightarrow\infty} \sum_{t_j^n\in\pi_n \cap(0,t]}Y^n_{t_{j-1}}\left(X_{t_j^n}-X_{t_{j-1}^n}\right) 
\end{equation*}
if the limit exists almost surely. The integral over the whole interval $[0,T]$ is defined as
\begin{equation*}
\int_0^T Y_s \ud X_s = \lim_{t\rightarrow T} \int_0^t Y_s \ud X_s.
\end{equation*}
\end{defn}
\begin{rmk}
\label{rmk:holder_folmer}
If the processes $X$ and $Y$ are H\"{o}lder continuous processes of order $\alpha$ and $\beta$ with $\alpha+\beta >1$, then it can be shown that the F\"{o}llmer integral exists and coincides with the Young integral.
\end{rmk}
We remark that while the definition is very useful for applications, it can sometimes be difficult to show that the F\"{o}llmer integral exists. 
However, in some cases the existence of the F\"{o}llmer integral can be proved. For instance, this is the case for processes $X$ that have finite 
quadratic variation. We first recall the definition of a quadratic variation process.
\begin{defn}
Let $(\pi_n)_{n=1}^{\infty}$ be a sequence of partitions $\pi_n=\{0=t_0^n<\ldots<t_{k(n)}^n=T\}$ such 
that $|\pi_n|=\max_{j=1,\ldots,k(n)}|t_j^n-t_{j-1}^n|\rightarrow 0$ as $n\to\infty$. Let $X$ be a continuous process.  Then $X$ is a \emph{quadratic variation process along the sequence $(\pi_n)_{n=1}^\infty$} if the limit
$$
{\la X \ra}_t = \lim_{n\rightarrow\infty} \sum_{t_j^n\in\pi_n \cap(0,t]}\left(X_{t_j^n}-X_{t_{j-1}^n}\right)^2
$$
exists almost surely.
\end{defn}
\begin{lma}\label{ito-follmer}\cite{Follmer}
Let $X$ be a continuous quadratic variation process and let $f\in C^{1,2}([0,T]\times\R)$. Let $0\le s< t\le T$. Then
\begin{eqnarray*}
f(t,X_t) &=& f(s,X_s) + \int_s^t \frac{\partial f}{\partial t}(u,X_u)\, \ud u + \int_s^t \frac{\partial f}{\partial x}(u,X_u)\, \ud X_u \\
& &+ \frac{1}{2}\int_s^t \frac{\partial^2 f}{\partial x^2}(u,X_u)\, \ud\langle X\rangle_u.  
\end{eqnarray*}
In particular, the Föllmer integral exists and has a continuous modification.
\end{lma}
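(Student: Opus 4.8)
The plan is to derive the formula from a telescoping sum over the fixed sequence $(\pi_n)$ combined with a second-order Taylor expansion, and then to read off the existence of the F\"ollmer integral as the one term whose convergence is not established directly. Work on the almost sure event on which $\langle X\rangle_\cdot$ exists. Adjoining the points $s$ and $t$ to each $\pi_n$ alters neither $|\pi_n|$ nor any of the relevant limits (the extra boundary terms in each sum are controlled by the modulus of continuity of $X$ and vanish), so we may assume $s,t\in\pi_n$. Telescoping then gives
\[
f(t,X_t)-f(s,X_s)=\sum_{t_j^n\in\pi_n\cap(s,t]}\big(f(t_j^n,X_{t_j^n})-f(t_{j-1}^n,X_{t_{j-1}^n})\big),
\]
and on the $j$-th subinterval the $C^{1,2}$-Taylor formula writes the increment as
\[
\frac{\partial f}{\partial t}(t_{j-1}^n,X_{t_{j-1}^n})\,\Delta t_j+\frac{\partial f}{\partial x}(t_{j-1}^n,X_{t_{j-1}^n})\,\Delta X_j+\frac12\frac{\partial^2 f}{\partial x^2}(t_{j-1}^n,X_{t_{j-1}^n})\,(\Delta X_j)^2+R_j^n,
\]
with $\Delta t_j=t_j^n-t_{j-1}^n$, $\Delta X_j=X_{t_j^n}-X_{t_{j-1}^n}$, and $R_j^n$ the combined first-order-in-time, second-order-in-space remainder.

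Next I would dispatch the three explicit sums. The time sum is a Riemann sum of the continuous function $u\mapsto\partial_t f(u,X_u)$ and converges to $\int_s^t\partial_t f(u,X_u)\,\ud u$. For the quadratic sum, put $\langle X\rangle^n_r=\sum_{t_j^n\in\pi_n\cap(0,r]}(\Delta X_j)^2$; each $\langle X\rangle^n_\cdot$ is nondecreasing and converges pointwise to $\langle X\rangle_\cdot$, so — using that $\langle X\rangle_\cdot$ is continuous, together with the fact that monotone functions converging pointwise to a continuous monotone limit on a compact interval converge uniformly — the convergence is uniform on $[0,T]$ and the induced Lebesgue--Stieltjes measures $\ud\langle X\rangle^n$ converge weakly to $\ud\langle X\rangle$. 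Since $u\mapsto\partial_{xx}f(u,X_u)$ is continuous and bounded, and replacing the sampling point $t_j^n$ by $t_{j-1}^n$ changes the sum by at most $\omega(|\pi_n|)\langle X\rangle^n_T\to0$ (with $\omega$ a modulus of continuity of that integrand), it follows that $\frac12\sum_j\partial_{xx}f(t_{j-1}^n,X_{t_{j-1}^n})(\Delta X_j)^2\to\frac12\int_s^t\partial_{xx}f(u,X_u)\,\ud\langle X\rangle_u$.

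For the remainder, on the compact set $[s,t]\times[\min_{0\le u\le T}X_u,\max_{0\le u\le T}X_u]$ uniform continuity of $\partial_{xx}f$ together with the modulus of continuity of $X$ yields $|R_j^n|\le\varepsilon_n(\Delta t_j+(\Delta X_j)^2)$ with $\varepsilon_n\to0$, hence $\sum_j|R_j^n|\le\varepsilon_n(T+\langle X\rangle^n_T)\to0$. Rearranging the telescoped identity, the leftover sum $\sum_j\partial_x f(t_{j-1}^n,X_{t_{j-1}^n})\Delta X_j$ must then converge, and its limit is by definition $\int_s^t\partial_x f(u,X_u)\,\ud X_u$; passing to the limit produces the stated formula. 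Finally, the other three terms on the right-hand side are continuous in $t$ (a composition with the continuous $X$, a Riemann integral of a continuous integrand, and a Riemann--Stieltjes integral of a continuous integrand against the continuous nondecreasing $\langle X\rangle$), so $t\mapsto\int_0^t\partial_x f(u,X_u)\,\ud X_u$ is continuous, which is the asserted continuous modification. I expect the main obstacle to be precisely the quadratic sum: upgrading pointwise convergence of $\langle X\rangle^n_\cdot$ to a mode of convergence strong enough to pass to the limit in the Riemann--Stieltjes sums (weak convergence of the induced measures), which is exactly where continuity of $\langle X\rangle$ is essential; the passage from arbitrary partitions to ones containing $s$ and $t$ is routine but worth spelling out.
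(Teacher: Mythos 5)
The paper does not actually prove this lemma --- it is quoted from F\"ollmer with an explicit pointer to \cite{Follmer} and \cite{Sondermann} for the proof --- and your argument is exactly the classical F\"ollmer proof (telescoping plus second-order Taylor expansion, weak convergence of the discrete quadratic-variation measures to $\ud\langle X\rangle$, and the gradient sum converging by elimination, which is precisely how the existence of the F\"ollmer integral is obtained), so it is correct and matches the intended source. The two points that deserve care are handled, at least implicitly, in your sketch: the Taylor remainder must be organized (space increment at the frozen time $t_{j-1}^n$ first, then the time increment) so that no term of order $\sum_j|\Delta X_j|$ appears --- your claimed bound $|R_j^n|\le\varepsilon_n(\Delta t_j+(\Delta X_j)^2)$ is exactly what that ordering yields --- and the continuity of $\langle X\rangle$, which you rightly identify as essential for upgrading pointwise convergence of $\langle X\rangle^n$ to weak convergence of the induced measures, must be read as part of the hypothesis, since the paper's pointwise definition of quadratic variation does not by itself force the limit function to be continuous.
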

For the proof and details, see \cite{Follmer} or \cite{Sondermann}.
\begin{rmk}
Note that in the above result the existence of stochastic integral is a consequence of the existence of other terms. Hence the existence of the stochastic integral is not proved directly but it is rather a consequence of the It\^o formula. In this paper we prove the existence of stochastic integrals directly, which in turn implies the existence of the term involving local times arising from a non-trivial quadratic variation.
\end{rmk}
\subsection{Facts on functions of bounded variation}
\label{subsec:bv}
We first recall some basic results on convex functions. Recall first that for every convex function $f:\R\rightarrow\R$, the left-sided derivative $f'_-$ and right-sided derivative $f'_+$ exists. Moreover, it is known (see, e.g. \cite{R-Y}) that $f'_-$ ($f'_+$, respectively) is increasing and left-continuous (right-continuous, respectively), and the set $\{x: f'_-(x)\neq f'_+(x)\}$ is at most countable. We also recall the following theorem.
\begin{thm}\label{measurethm}
Let $f$ be a convex function. Then the second derivative $f''$ of $f$ exists in the sense of distributions and it is a positive Radon measure. Conversely, for a given Radon measure $\mu$ on real line there exists a convex function $f$ such that $f''=\mu$. Moreover, for any interval $I\subset\R$ and any $x\in\textit{int}(I)$ we have
\begin{equation}
\label{rep_convex_aux}
f'_-(x) = C_I + \frac{1}{2}\int_I sgn(x-a)\mu(\ud a),
\end{equation}
where 
\begin{equation*}
sgn(x)= \begin{cases}
1, & \quad x>0\\
-1, &\quad x\leq 0
\end{cases}.
\end{equation*}
\end{thm}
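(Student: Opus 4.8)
The plan is to establish the three assertions in turn, relying on the classical facts about convex functions recalled immediately above. First, since $f$ is convex and finite on all of $\R$, it is locally Lipschitz, hence locally absolutely continuous, and because $f'_-=f'_+$ off an at most countable set one gets $f(x)-f(y)=\int_y^x f'_-(u)\,\ud u$ for all $x,y$; thus $f'=f'_-$ in the distributional sense. Now $f'_-$ is increasing and left-continuous, so it generates a unique positive Lebesgue--Stieltjes measure $\mu$, determined by $\mu([a,b))=f'_-(b)-f'_-(a)$, which is finite on compacts and hence Radon. A Fubini / integration-by-parts argument then gives $\int_\R f'_-(u)\varphi'(u)\,\ud u=-\int_\R\varphi(u)\,\mu(\ud u)$ for every $\varphi\in C_c^\infty(\R)$, i.e. the distributional derivative of $f'_-$ is $\mu$; combining the two identities yields $f''=\mu$, which is a positive Radon measure.

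For the converse, given a positive Radon measure $\mu$ I would set $g(x)=\mu([0,x))$ for $x\ge 0$ and $g(x)=-\mu([x,0))$ for $x<0$ --- finiteness on compacts makes $g$ a well-defined increasing function --- and then $f(x)=\int_0^x g(u)\,\ud u$, which is convex because $g$ is increasing and satisfies $f''=\mu$ by the first part. For the representation formula, fix $I$ and a reference point $x_0\in\textit{int}(I)$. From $f''=\mu$ one reads off $f'_-(x)-f'_-(x_0)=\mu([x_0,x))$ when $x_0<x$ and $f'_-(x)-f'_-(x_0)=-\mu([x,x_0))$ when $x<x_0$, for $x\in\textit{int}(I)$. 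On the other hand $\tfrac12\int_I\big(sgn(x-a)-sgn(x_0-a)\big)\mu(\ud a)$ can be computed by hand: the integrand vanishes unless $a$ lies between $x_0$ and $x$, where it equals $+2$ (if $x_0\le a<x$) or $-2$ (if $x\le a<x_0$), so this quantity equals exactly $\mu([x_0,x))$, respectively $-\mu([x,x_0))$. Hence, putting $C_I:=f'_-(x_0)-\tfrac12\int_I sgn(x_0-a)\mu(\ud a)$, identity \eqref{rep_convex_aux} holds for all $x\in\textit{int}(I)$, and a short computation shows that $C_I$ does not depend on the particular $x_0$ chosen.

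I expect the only genuinely delicate point to be the bookkeeping of left- versus right-continuity and of half-open intervals: this enters both in identifying the distributional derivative of $f'_-$ with the Lebesgue--Stieltjes measure $\mu$ and in the $sgn$-computation above. The remaining steps --- local Lipschitz continuity and absolute continuity of convex functions, the integration by parts against the test function $\varphi$, and the convexity of the antiderivative of an increasing function --- are standard, and indeed the whole statement can be found in references such as \cite{R-Y}.
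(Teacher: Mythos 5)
Your proof is correct, and there is nothing in the paper to compare it against: Theorem \ref{measurethm} is stated there as a recalled classical fact (cf.\ \cite{R-Y}) with no proof given, and your argument is the standard one --- distributional differentiation of the locally absolutely continuous convex $f$ to get $f'_-$, the Lebesgue--Stieltjes measure of the left-continuous increasing $f'_-$ via $\mu([a,b))=f'_-(b)-f'_-(a)$, and the half-open-interval computation of $\tfrac12\int_I\bigl(sgn(x-a)-sgn(x_0-a)\bigr)\mu(\ud a)$, which you correctly match to the paper's convention $sgn(0)=-1$. The only caveat worth recording is one inherited from the statement itself rather than from your argument: for \eqref{rep_convex_aux} the integral $\int_I sgn(x-a)\mu(\ud a)$ must converge, so $I$ should be bounded (or $\mu(I)<\infty$); this is harmless here, since the paper only ever uses the formula with $\mu$ of compact support, as noted in the remark following the theorem.
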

Note that if the support of the measure $\mu$, denoted by $\textit{supp}(\mu)$, is compact, then the representation (\ref{rep_convex_aux}) holds uniformly with some general constant $C$. This is one of the key facts in order to prove our main results. 

We also recall the following crucial facts for our analysis.
Let $f:\R\rightarrow\R$ be a convex function such that the corresponding measure $\mu$ has compact support and $\phi$ be a positive $C^{\infty}$-function with compact support in $(-\infty,0]$ s.t. $\int_\R \phi(x)\ud x=1$. Defining
$$
f_n(x) = n\int_{-\infty}^0 f(x+y)\phi(ny)\ud y,\quad n\in \N
$$
it is known that for every $n\in\N$, $f_n \in C^{\infty}$ and $f_n$ is locally bounded convex function with the properties that $f_n$ converges to $f$ pointwise and $f'_n$ increases to $f'_-$ \cite{R-Y}. More importantly, for every $g\in C^1_0$ we have
\begin{equation}
\label{convex_appro_smooth}
\lim_{n\rightarrow\infty}\int_\R g(x) f''_n(x)\ud x = \int_\R g(x) \mu(\ud x).
\end{equation}
As a direct consequence of equation (\ref{convex_appro_smooth}) we obtain the following Lemma.

\begin{lma}\label{Lemmamain}
Let $X=(X_t)_{t \geq 0}$ be any non-trivial stochastic process, and let $\mu$ be a positive Radon measure with compact support and $f''_n$ be the corresponding approximation satisfying (\ref{convex_appro_smooth}). Then 
\begin{equation*}
\int \mathbf{1}_{X_s<a<X_t} \mu(da) \leq \lim\inf_n \int \mathbf{1}_{X_s<a<X_t}f''_n(a)\ud a.
\end{equation*}
\end{lma}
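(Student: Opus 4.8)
The plan is to sandwich the indicator of the (random) open interval with endpoints $X_s$ and $X_t$ between smooth compactly supported functions and then exploit the weak convergence (\ref{convex_appro_smooth}). Everything is done pathwise, so fix $\omega$ and abbreviate $x_1=X_s(\omega)$, $x_2=X_t(\omega)$. If $x_1\ge x_2$ then $\mathbf{1}_{x_1<a<x_2}\equiv 0$, so the left-hand side is $0$; since each $f_n$ is convex and smooth we have $f_n''\ge 0$, hence the right-hand side is nonnegative and there is nothing to prove. Assume therefore $x_1<x_2$ and put $I=(x_1,x_2)$, a bounded open interval.

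First I would pick a sequence $(g_k)_{k\ge 1}$ of smooth functions with compact support contained in $I$, with $0\le g_k\le 1$ and $g_k(a)\uparrow\mathbf{1}_I(a)$ for every $a\in\R$ as $k\to\infty$ (for instance $g_k$ equal to $1$ on $[x_1+2/k,x_2-2/k]$, vanishing outside $[x_1+1/k,x_2-1/k]$, with smooth transitions, once $k$ is large enough). Each $g_k$ lies in $C^1_0$, so (\ref{convex_appro_smooth}) applies to it. Since $0\le g_k\le\mathbf{1}_I$ and $f_n''\ge 0$, for every $n$ and $k$ we have $\int_\R g_k(a)f_n''(a)\,\ud a\le\int_\R\mathbf{1}_I(a)f_n''(a)\,\ud a$; letting $n\to\infty$ on the left via (\ref{convex_appro_smooth}) and taking $\liminf_n$ on the right yields
\[
\int_\R g_k(a)\,\mu(\ud a)=\lim_{n\to\infty}\int_\R g_k(a)f_n''(a)\,\ud a\le\liminf_{n\to\infty}\int_\R\mathbf{1}_I(a)f_n''(a)\,\ud a
\]
for each fixed $k$. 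The right-hand side does not depend on $k$, so taking the supremum over $k$ and using monotone convergence for the positive measure $\mu$ (recall $g_k\uparrow\mathbf{1}_I$) gives $\int_\R\mathbf{1}_I(a)\,\mu(\ud a)=\sup_k\int_\R g_k(a)\,\mu(\ud a)\le\liminf_n\int_\R\mathbf{1}_I(a) f_n''(a)\,\ud a$, which is precisely the claimed inequality because $\mathbf{1}_I(a)=\mathbf{1}_{x_1<a<x_2}$.

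The only mildly delicate point is that $\mathbf{1}_I$ is not itself a $C^1_0$ function, which is what forces the two-stage limit (first $n\to\infty$ with $g_k$ fixed, then $k\to\infty$); no genuine analytic difficulty arises, and the construction of the $g_k$ is routine. Note also that the argument uses only $f_n''\ge 0$ together with (\ref{convex_appro_smooth}), so the non-triviality assumption on $X$ is not needed for this lemma itself and is listed only because it is relevant in the subsequent applications.
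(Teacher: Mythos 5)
Your proof is correct and follows essentially the same Portmanteau-style route as the paper: minorize the indicator of the open interval $(X_s,X_t)$ by compactly supported functions to which (\ref{convex_appro_smooth}) applies, use $f_n''\ge 0$, and pass to the limit. The only difference is cosmetic --- you apply (\ref{convex_appro_smooth}) directly to smooth minorants $g_k$, whereas the paper first extends the convergence to continuous compactly supported $h$ and then uses the piecewise-linear minorants $l_N=1\wedge(N\,d(x,G^c))$; your shortcut is, if anything, slightly cleaner.
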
 

\begin{proof}
The proof follows similar ideas as the proof of Portmanteau theorem concerning convergence of probability measures (see, e.g. \cite{klenke}).

Let a function $h$ be continuous with compact support, and denote $\alpha = \sup_x  |h(x)|$. Suppose there exists a sequence $g_i \in C_0^\infty$ with $-\alpha \leq g_i \leq h$, and $\lim_{i \rightarrow \infty}g_i(x)=h(x)$. Since $f_n^{''}$ is positive, then
\begin{equation*}
\liminf_n\int_{\mathbb{R}} h(x)f_n^{''}(x)dx \geq \liminf_n\int_{\mathbb{R}} g_i(x)f_n^{''}(x)dx=\int_{\mathbb{R}} g_i(x)\mu(dx).
\end{equation*}
Apply dominated convergence theorem to $g_i(x)+\alpha$ and $h(x)+\alpha$ to get
\begin{equation*}
\lim_{n \rightarrow \infty}\int_{\mathbb{R}} g_i(x)\mu(dx)=\int_{\mathbb{R}} h(x)\mu(dx).
\end{equation*}
Therefore we get
\begin{equation*}
\liminf_n\int_{\mathbb{R}} h(x)f_n^{''}(x)dx \geq \int_{\mathbb{R}} h(x)\mu(dx).
\end{equation*}
By using same for $-h$ we obtain
\begin{equation*}
\limsup_n\int_{\mathbb{R}} h(x)f_n^{''}(x)dx \leq \int_{\mathbb{R}} h(x)\mu(dx).
\end{equation*}
Thus
\begin{equation}\label{eq3}
\lim_{n \rightarrow \infty}\int_{\mathbb{R}} h(x)f_n^{''}(x)dx=\int_{\mathbb{R}} h(x)\mu(dx),
\end{equation}
for $h$ continuous with compact support.

Let now $A$ be a compact set. For non-empty open set $G \in A$, define $l_N=1 \wedge (N\cdot d(x, G^c))$. Then $l_N$ is a continuous function with compact support, and $0 \leq l_N \leq \mathbf{1}_G$, $l_N \uparrow \mathbf{1}_G$.

Now for an open set $G=(a, b)$, we can rewrite $l_N$ as
\begin{equation*}
l_N=N(x-a)\mathbf{1}_{a<x < \frac{1}{N}+a}+\mathbf{1}_{\frac{1}{N}+a \leq x \leq b-\frac{1}{N}}+N(b-x)\mathbf{1}_{b-\frac{1}{N} < x < b}.
\end{equation*}
It's obvious that we can find a sequence of non-negative smooth functions $g_n$ with compact support approaching to $l_N$ from below.

Thus we can use the equation (\ref{eq3}) above for $h=l_N$ to obtain 
\begin{equation*} 
\lim_{n \rightarrow \infty}\int_{\mathbb{R}} h(x)f_n^{''}(x)dx=\int_{\mathbb{R}} h(x)\mu(dx),
\end{equation*}
and consequently we get
\begin{equation*}
\int_{\mathbb{R}} l_N \mu(dx)= \liminf_n \int_{\mathbb{R}}l_N f^{''}_ndx \leq \liminf_n \int_{\mathbb{R}}\mathbf{1}_G f^{''}_n(x)dx.
\end{equation*}

Let now $N \rightarrow \infty$ to obtain
\begin{equation*}
\mu(G) \leq \liminf_n \int_{\mathbb{R}}\mathbf{1}_Gf^{''}_n(x)dx.
\end{equation*}

This implies
\begin{equation*}
\int_{\mathbb{R}} \mathbf{1}_{X_s<a<X_t} \mu(da) \leq \liminf_n \int_{\mathbb{R}} \mathbf{1}_{X_s<a<X_t}f^{''}_n(a)da.
\end{equation*}

\end{proof}

Next we briefly recall the relation between functions of bounded variation and the derivatives of convex functions.
\begin{defn}
A function $f\in\R\rightarrow\R$ is called to be of bounded variation if $f$ has bounded $p$-variation over the whole real line $\R$ for $p=1$. The space of functions of bounded variation is denoted by $BV(\R)$. The space of functions of locally bounded variation, denoted by $BV^{loc}(\R)$, is a space of functions $f$ which are of bounded variation over every compact set $K\in\R$.
\end{defn}
The fundamental result called Jordan decomposition explains the relation between functions $f\in BV^{loc}(\R)$ and convex functions.
\begin{thm}
A function $f$ is of bounded variation on an interval $[a,b]$ if and only if it can be written as a difference $f=f_1-f_2$, where $f_1$ and $f_2$ are non-decreasing. 
\end{thm}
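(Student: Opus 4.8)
The plan is to prove the two implications separately, handling the forward direction through the \emph{total variation function} of $f$.

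For the ``if'' direction I would argue directly from the definition of bounded $1$-variation. Suppose $f = f_1 - f_2$ on $[a,b]$ with $f_1,f_2$ non-decreasing. For an arbitrary partition $\pi = \{a = x_0 < x_1 < \cdots < x_n = b\}$ one writes each increment as $\Delta f_{x_k} = \Delta (f_1)_{x_k} - \Delta (f_2)_{x_k}$, applies the triangle inequality, and uses $\Delta (f_i)_{x_k} \geq 0$ together with telescoping to obtain
$$
v_1(f;\pi) \;\leq\; \sum_{k} \Delta (f_1)_{x_k} + \sum_k \Delta (f_2)_{x_k} \;=\; \bigl(f_1(b)-f_1(a)\bigr) + \bigl(f_2(b)-f_2(a)\bigr).
$$
Since the right-hand side does not depend on $\pi$, taking the supremum over all partitions gives $v_1(f) < \infty$, i.e.\ $f$ is of bounded variation on $[a,b]$.

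For the ``only if'' direction I would assume $v_1(f)<\infty$ on $[a,b]$ and define $V(x) := \sup_\pi v_1(f;\pi)$, where the supremum runs over partitions of $[a,x]$, with $V(a):=0$; note that $V$ is finite-valued on $[a,b]$ exactly because $f$ is of bounded variation, so $V$ is a genuine real-valued function. The technical core is the additivity identity $V(y) = V(x) + V_f([x,y])$ for $a \leq x < y \leq b$, where $V_f([x,y])$ denotes the total variation of $f$ restricted to $[x,y]$: the inequality ``$\geq$'' follows by concatenating a partition of $[a,x]$ with one of $[x,y]$, and ``$\leq$'' by inserting the point $x$ into any partition of $[a,y]$, which can only increase the variation sum by the triangle inequality. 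Granting this, $V$ is non-decreasing because $V_f([x,y]) \geq 0$, and for $x<y$
$$
(V-f)(y) - (V-f)(x) \;=\; V_f([x,y]) - \bigl(f(y)-f(x)\bigr) \;\geq\; |f(y)-f(x)| - \bigl(f(y)-f(x)\bigr) \;\geq\; 0,
$$
so $V-f$ is non-decreasing as well. Setting $f_1 := V$ and $f_2 := V - f$ then yields the desired decomposition $f = f_1 - f_2$.

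The main obstacle — in fact essentially the only non-routine point — is establishing the additivity of the total variation over adjacent subintervals, since it is what powers both monotonicity assertions; the remaining estimates are elementary applications of the triangle inequality and telescoping.
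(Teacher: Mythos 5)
Your argument is correct and complete: the ``if'' direction is the standard triangle-inequality-plus-telescoping estimate, and the ``only if'' direction is the classical Jordan decomposition via the total variation function $V(x)=V_f([a,x])$, with the additivity $V(y)=V(x)+V_f([x,y])$ (which you rightly flag as the only non-routine point, and whose two inequalities you sketch correctly) giving that both $V$ and $V-f$ are non-decreasing. The paper itself states this theorem as a known classical fact and offers no proof, so there is nothing to compare against; your write-up is exactly the textbook proof one would supply, and no gaps remain beyond the routine verifications you already indicate.
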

Let now $f$ and $g$ be convex functions with $f'_-$ and $g'_-$ as their left-sided derivatives. Now $f'_-$ and $g'_-$ are non-decreasing, and consequently the function
$$
h(x) = f'_-(x) - g'_-(x) 
$$ 
is of locally bounded variation. Consequently, to obtain our results it is sufficient to consider given convex function $f$ and its left-sided derivative $f'_-$ from which the general case follows by linearity. Moreover, without loss of generality we can always assume that the corresponding measure $\mu$ has compact support. The general case follows by reduction arguments which are also presented in the proofs.

\subsection{A word of warning}
In this subsection we give a word of warning and briefly explain the gaps present in the literature.

Let now $B^H$ be a fractional Brownian motion with Hurst index $H>\frac{1}{2}$ and let $f$ be a convex function. In \cite{AMV} the authors proved the existence of integrals of form $\int_0^T f'_-(B^H_u) \ud B^H_u$, and applied this fact to prove It\^o formula
\begin{equation}
\label{ito_fbm}
f(B_T^H) = f(0) + \int_0^T f'_-(B^H_u) \ud B^H_u
\end{equation}
for convex function. To obtain such a result, the authors applied classical It\^o formula to the smooth approximation $f_n$ of convex function $f$ explained in previous subsection, and then applied fractional Besov space techniques to prove the convergence of integrals
$$
\int_0^T (f_n)'(B^H_u)\ud B^H_u \rightarrow \int_0^T f'_-(B^H_u)\ud B^H_u.
$$
For such result one needs to find an integrable dominants for the difference $(f_n)' - f'_-$ in terms of the norm $||\cdot||_{2,\beta}$ (see also proof of Theorem \ref{thmito_convex}). In \cite{AMV} it was argued that, starting from equation (\ref{convex_appro_smooth}), one can take any sequence $\phi_\epsilon$ of smooth functions converging uniformly to dirac delta function $\delta_a$ at point $a$ and obtain that
$$
f''_n(a) \approx \int \phi_\epsilon f''_n(x) \ud x \approx \int \phi_\epsilon \mu(\ud x).
$$
Moreover, one have that
\begin{equation}
\label{equation_false}
\int \phi_\epsilon \mu(\ud x) \approx \int \delta_a \mu(\ud x) = \mu(a) < \infty
\end{equation}
which would lead to $\sup_n f''_n(a) < \infty$ uniformly in $n$. However, the statement of equation (\ref{equation_false}) is false in general; if $\mu$ has atom at point $a$, then we obtain 
$$
\int \phi_\epsilon \mu(\ud x) \rightarrow \infty.
$$
On the other hand, by Lebesgue decomposition theorem (see \cite{rudin}) the measure $\mu$ can be decomposed as
$$
\mu = \mu_{AC} + \mu_{SC} + \mu_{SD},
$$
where $\mu_{AC}$ is absolutely continuous with respect to Lebesgue measure, $\mu_{SC}$ is singular continuous part and $\mu_{SD}$ is singular discontinuous part, i.e. $\mu_{SD}$ corresponds to the atoms of the measure $\mu$. Now the statement (\ref{equation_false}) is clearly true for $\mu_{AC}$ and false for $\mu_{SD}$, but it is not clear whether the statement is true for $\mu_{SC}$. On the other hand, if $\sup_n f''_n(a) < \infty$ does indeed hold for some measure $\mu$, then by applying Lemma \ref{Lemmamain} we obtain
$$
\int \textbf{1}_{X_s < a < X_t} \mu(\ud a) \leq C|X_t - X_s|.
$$
Consequently, for every $\alpha$-H\"older continuous process $X$ we would obtain that 
$$
f'_-(X_\cdot) \in W_1^{\alpha-\epsilon} \subset C^{\alpha-2\epsilon}.
$$
In other words, any function of locally bounded variation applied to H\"older continuous process $X$ would still be H\"older continuous. Clearly, such result is true only if $f'_-$ is sufficiently smooth in which case the integral would reduce back to Young integration theory for H\"older continuous processes.

The above mentioned false argument was also applied in \cite{V-S} to generalise the results of \cite{AMV} to more general class of Gaussian processes, although by examining the proof in \cite{V-S} it is clear that the proof is correct provided that $\mu_{SC}([-\epsilon,\epsilon])=0$ for small enough $\epsilon$. We also note that similar techniques was applied in \cite{Heikki}, where the author studied average of geometric fractional Brownian motion and proved certain type of It\^o formula in that case. To obtain the result the author in \cite{Heikki} proved that for a given functional $X_t$ and the approximating sequence $f_n(X_t)$ we have
$$
\E||f_n||_{2,\beta} \rightarrow \E||f||_{2,\beta}.
$$
Then the author applied dominated convergence theorem to obtain the result. However, this is not sufficient to apply dominated convergence theorem. Moreover, it is not even clear whether it holds that $\E||f||_{2,\beta}<\infty$ (see also Remark \ref{rmk:expectation}). Furthermore, similar gap appears in \cite{AMV} where the authors proved that the integral can be approximated with forward sums. There it was proved that there exists an upper bound which converges to an integrable limit. This neither is sufficient in order to apply dominated convergence theorem. 
%%%%%%%%%%%%%%%%%%%%%%%%%%%%%%%%%%%%%%%%%%%%%%%
\section{Existence of gLS-integrals}
\label{sec:gls}
In this section we prove one of our main results; the existence of gLS-integrals. First we give some definitions and technical lemmas. 

We will make the following assumption.
\begin{Assumption}
\label{assumption_density}
Let $X$ be a stochastic process. We assume that for almost every $t\in[0,T]$, the process $X$ has a density $p_t(y)$ and there exists a function $g\in L^1([0,T])$ such that $\sup_y p_t(y) \leq g(t)$.
\end{Assumption}
The following examples should convince the reader that the assumption is not very restrictive.
\begin{exm}
Let $X$ be a stationary stochastic process such that $X_0$ has a density $p(y)$. Then $sup_y p(y) = C < \infty$, and consequently we can take $g(t)=C$.
\end{exm}
\begin{exm}
Let $X$ be a Gaussian process with variance function $V(t)$. Then we have
$$
\sup_y p_t(y) = \frac{1}{\sqrt{2\pi V(t)}}.
$$
Consequently, $X$ satisfies Assumption \ref{assumption_density} provided that $V(t) \geq ct^{2\beta}$ for some $\beta<1$. Especially, this is usually satisfied for every interesting Gaussian process $X$. Indeed, a natural assumption is that $V(t)>0$ for every $t>0$, i.e. there is some randomness involved. On the other hand, for many interesting cases we have $X_0=0$, and thus one has to only study the behaviour of $V(t)$ at zero. Now if $V(t) \leq ct^{2\beta}$ with some $\beta>1$, then the process is H\"older continuous of order $\beta$ around the origin. Hence if $\beta>1$, the process would be constant which is hardly interesting. Similarly, in the limiting case $V(t)\sim t^2$ the process is differentiable in the mean square sense, and consequently one can apply classical integration techniques. 
\end{exm}
\begin{rmk}
In \cite{V-S} the authors studied convex functions and H\"older continuous Gaussian processes of order $\alpha>\frac{1}{2}$ together with additional assumption $V(t) \geq ct^2$, and the proof \cite{V-S} relies on an estimate for probability $\P(X_s<a<X_t)$ with some given level $a$. Moreover, the gap explained previously does not affect the proof present in \cite{V-S} provided that $\mu_{SC}([-\epsilon,\epsilon])=0$ for some $\epsilon>0$. Consequently, in the case of Gaussian processes and many functions $f$ of interest, we have to pose more restrictive condition $V(t)\geq ct^{2\beta}$ for some $\beta<1$. However, clearly such condition is not very restrictive. In particular, fractional Brownian motion with arbitrary Hurst index $H\in(0,1)$ satisfies Assumption \ref{assumption_density}.
\end{rmk}
\begin{exm}
Assume that a process $X$ satisfies Assumption \ref{assumption_density}. If we add a deterministic drift $f(t)$ and consider a process $Y_t = X_t + f(t)$, it is again clear that then the process $Y$ satisfies Assumption \ref{assumption_density}. Hence our results are valid if one adds a suitably regular drift term into the model.
\end{exm}
\begin{exm}
Assume that a process $X_1$ satisfies Assumption \ref{assumption_density} and a process $X_2$ is independent of $X_1$ and has a density. Then the density of a process $Y = X_1 + X_2$ satisfies
\begin{equation*}
p^Y_t = \int p^1_t(z-y)p^2_t(y) \ud y < f(t)\int p^2_t(y) \ud y=f(t).
\end{equation*}
Consequently, the process $Y$ also satisfies the Assumption \ref{assumption_density}.
\end{exm}
In general, Malliavin calculus is a powerful tool to study the existence and smoothness of the density (see, e.g. \cite{nualart}). Moreover, it is known that any random variable lying in some fixed Wiener-chaos admits a density. In particular, a finite sum of such variables admits a density. 

%%%%%%%%%%%%%%%%%%%%%%%%%%%%%%%%%%%%%%%%%%%%%
The following Lemma is a version of similar Lemma from \cite{Heikki}, and is one of our key ingredients. 
\begin{lma}\label{lma:assumption}
Let $X$ be a stochastic process such that Assumption \ref{assumption_density} holds. Then for every function $f: [0,T] \rightarrow \R$ and every $\alpha\in(0,1)$ it holds
$$
\E \int_0^T |X_t +f(t)|^{-\alpha}\ud t < C < \infty,
$$
where the constant $C$ is independent of the function $f$.
\end{lma}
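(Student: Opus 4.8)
The plan is to exchange the order of expectation and time integration via Tonelli's theorem --- the integrand $|X_t+f(t)|^{-\alpha}$ is nonnegative and jointly measurable in $(\omega,t)$ for the (continuous-path, measurable-$f$) processes we have in mind --- thereby reducing the claim to the pointwise-in-$t$ estimate
$$
\E\,|X_t+f(t)|^{-\alpha}\leq \frac{2}{1-\alpha}\,g(t)+1\qquad\text{for a.e. }t\in[0,T].
$$
Granting this, integrating over $[0,T]$ yields $\E\int_0^T|X_t+f(t)|^{-\alpha}\,\ud t\leq \frac{2}{1-\alpha}\|g\|_{L^1([0,T])}+T=:C$, a constant depending only on $\alpha$, $T$ and $\|g\|_{L^1}$, and in particular not on $f$.

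For the pointwise estimate, fix $t$ such that Assumption \ref{assumption_density} furnishes a density $p_t$ with $\sup_y p_t(y)\leq g(t)$. Since $f(t)$ is a constant, we may write the expectation as a Lebesgue integral against $p_t$ and substitute $z=y+f(t)$ to obtain
$$
\E\,|X_t+f(t)|^{-\alpha}=\int_\R |z|^{-\alpha}\,p_t(z-f(t))\,\ud z.
$$
I would then split this integral at $|z|=1$. On $\{|z|\le 1\}$, bound $p_t(z-f(t))\leq g(t)$ and use that $\alpha<1$ makes $|z|^{-\alpha}$ integrable at the origin, with $\int_{\{|z|\le1\}}|z|^{-\alpha}\,\ud z=\frac{2}{1-\alpha}$, giving the contribution $\frac{2}{1-\alpha}g(t)$. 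On $\{|z|>1\}$, use $|z|^{-\alpha}\leq 1$ together with $\int_\R p_t(z-f(t))\,\ud z=1$, giving a contribution of at most $1$. Adding the two pieces yields the desired pointwise bound.

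The argument has no real obstacle: the only subtle point is that $|z|^{-\alpha}$ is \emph{not} integrable at infinity, which is exactly why the density bound alone is insufficient and the split at $|z|=1$ (together with the total-mass-one normalisation of $p_t$) is needed. Two minor bookkeeping remarks: Assumption \ref{assumption_density} is imposed only for almost every $t$, but the exceptional null set of times contributes nothing to $\int_0^T(\cdot)\,\ud t$; and uniformity in $f$ is automatic once one observes that $f$ enters only through the translation $z\mapsto z-f(t)$, which leaves both $\sup_y p_t(y)$ and $\int_\R p_t$ unchanged.
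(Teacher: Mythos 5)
Your proof is correct and takes essentially the same route as the paper: Tonelli to reduce to a pointwise-in-$t$ bound, then a split of the integral against the density near the singularity of $|z|^{-\alpha}$, using $\sup_y p_t(y)\le g(t)$ there and $|z|^{-\alpha}\le 1$ elsewhere, yielding the same constant $\frac{2}{1-\alpha}\Vert g\Vert_{L^1([0,T])}+T$. (Your substitution $z=y+f(t)$ even centres the split correctly at the singular point, where the paper's interval $[f(t)-1,f(t)+1]$ contains a harmless sign slip.)
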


\begin{proof}
\begin{equation*}
\E \int_0^T |X_t +f(t)|^{-\alpha}\ud t= \int_0^T \E |X_t +f(t)|^{-\alpha}\ud t.
\end{equation*}
According to Assumption \ref{assumption_density}, we obtain
\begin{equation*}
\begin{split}
\E |X_t +f(t)|^{-\alpha}& = \int_{\R} |y +f(t)|^{-\alpha} p_t(y)dy\\
&=  \int_{f(t)-1}^{f(t)+1} |y +f(t)|^{-\alpha} p_t(y)dy+ \int_{\R \setminus[f(t)-1, f(t)+1]} |y +f(t)|^{-\alpha} p_t(y)dy\\
&\leq g(t) \int_{f(t)-1}^{f(t)+1} |y +f(t)|^{-\alpha} dy +1\\
&= \frac{2}{1-\alpha}\, g(t)+1.
\end{split}
\end{equation*}

Thus we have
\begin{equation*}
\E \int_0^T |X_t +f(t)|^{-\alpha}\ud t \leq \int_0^T  \frac{2}{1-\alpha}\,g(t)dt +T\leq (\frac{2}{1-\alpha}) \lVert g(t) \rVert_{L^1([0,T])}+T.
\end{equation*}
\end{proof}

\begin{rmk}
In \cite{Heikki} the Lemma was proved in special case $f(t) = a$ and it was assumed that $X$ has density $p_t$ for every $t\in[0,T]$. However, it is clear that it is sufficient to assume the existence of density for almost every $t$. This can be handy for some special cases. For example, in \cite{isaacson} the author constructed a martingale which has atoms on certain time points. Consequently, it is clear that on such points the density does not exists. On the other hand, the resulting martingale in \cite{isaacson} inherits the H\"older properties of standard Brownian motion, and hence it could be applied for our results. Similarly, we wish to present the result with arbitrary measurable function $f(t)$ which might be helpful to further generalise our results.
\end{rmk}

Recall that $C^{\alpha}([0,T])$ denotes the space of $\alpha$-H\"older continuous functions on $[0,T]$ and $BV^{loc}$ denotes the space of functions $f:\R\rightarrow\R$ that are of locally bounded variation. 
\begin{defn}
Consider the space $BV^{loc}\left(C^{\alpha}([0,T])\right)$, i.e. the image of $C^{\alpha}$ under mappings $g:\R\rightarrow\R$ such that $g\in BV^{loc}$. More precisely, $f\in BV^{loc}\left(C^{\alpha}([0,T])\right)$ if and only if it can be represented as
$$
f = g(h(x))
$$
for some $g\in BV^{loc}$ and $h\in C^{\alpha}([0,T])$. We will denote $Z\in BV_iC^{\alpha}$ (abbreviation for ''Bounded variation image of $\alpha$-H\"older space'') if $Z\in BV^{loc}\left(C^{\alpha}([0,T])\right)$ and the driving process $X\in C^{\alpha}([0,T])$ satisfies Assumption \ref{assumption_density}, i.e. $Z$ can be represented as
\begin{equation}
\label{rep_BViC}
Z_t = g(X_t)
\end{equation}
for some $g\in BV^{loc}$ and some $X\in C^{\alpha}([0,T])$ satisfying Assumption \ref{assumption_density}.
\end{defn}
\begin{rmk}
Note that we are not assuming that a density of $Z$ itself exists. Indeed, this is rarely the case as can be seen by considering function $g(x) = \textbf{1}_{x>0}$.
\end{rmk}
\begin{rmk}
The authors in \cite{AMV} and \cite{V-S}, motivated by financial applications, considered also smooth and monotonic transformations $g(X)$ of a Gaussian process $X$. It is clear that such transformations are already included to the space $BV_iC^{\alpha}$. Indeed, if the driving process $X$ satisfies Assumption \ref{assumption_density}, then it is clear that a monotone and smooth transformation $g(X)$ also satisfies Assumption \ref{assumption_density}. 
\end{rmk}
For the rest of the paper, the dependence on the interval $[0,T]$ is omitted on the notation unless otherwise specified.

\begin{thm}\label{thm:main}
Let $\alpha\in(0,1)$. Then for every $\epsilon\in(0,1-\alpha)$ we have
$$
BV_iC^{\alpha+\epsilon} \subset W_2^\alpha
$$ 
almost surely.
\end{thm}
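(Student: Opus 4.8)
The plan is to combine the convex-function representation of Subsection~\ref{subsec:bv} with the density bound of Assumption~\ref{assumption_density}, after two successive localizations. Fix $Z_t=g(X_t)$ with $g\in BV^{loc}$ and $X\in C^{\alpha+\epsilon}$ satisfying Assumption~\ref{assumption_density}. Since $X$ has continuous paths, the events $\Omega_M=\{\sup_{0\le t\le T}|X_t|\le M\}$ increase to the sure event; on $\Omega_M$ the path of $Z$ depends only on the restriction $g|_{[-M,M]}$, which by Jordan decomposition is $g_1-g_2$ with $g_1,g_2$ bounded and non-decreasing on $[-M,M]$. These extend to convex functions $F_i$ on $\R$ that are affine off $[-M,M]$, so that $\mu_i:=F_i''$ is a finite measure supported in $[-M,M]$ and $g_i=(F_i)'_-$. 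By the linearity noted in Subsection~\ref{subsec:bv} it is then enough to bound $\|f'_-(X_{\cdot})\|_{2,\alpha}$ for a single convex $f$ with $\mu:=f''$ of compact support, and Theorem~\ref{measurethm} gives $f'_-(x)=C+\tfrac12\int_{\R}sgn(x-a)\,\mu(\ud a)$ with one constant $C$.

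From this, for $0\le s<t\le T$ the integrand $sgn(X_t-a)-sgn(X_s-a)$ is bounded by $2$ and vanishes unless $a$ lies in $I_{s,t}:=[X_s\wedge X_t,\,X_s\vee X_t]$; hence, with $Z_t=f'_-(X_t)$,
\begin{equation*}
|Z_t-Z_s|\le\mu(I_{s,t})=\int_{\R}\mathbf{1}_{\{a\in I_{s,t}\}}\,\mu(\ud a).
\end{equation*}
Recalling $\|\cdot\|_{2,\alpha}$, it therefore suffices to show that almost surely
\begin{equation*}
\int_0^T\frac{|Z_s|}{s^\alpha}\,\ud s<\infty
\qquad\text{and}\qquad
\int_0^T\!\!\int_0^s\frac{\mu(I_{u,s})}{(s-u)^{1+\alpha}}\,\ud u\,\ud s<\infty .
\end{equation*}
The first integral is finite because $g$ is locally bounded and $X$ has bounded paths, so $Z$ is bounded, and $\int_0^T s^{-\alpha}\,\ud s<\infty$ as $\alpha<1$. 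Everything thus reduces to the second, \emph{double}, integral; note that one cannot instead put $Z$ into some $W_1^{\gamma}$, since $Z=g(X)$ genuinely jumps along the level crossings of $X$ (this is the content of the word of warning above), so the averaging built into the $W_2$-norm is essential.

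For the double integral I localize once more, in the H\"older constant of $X$: let $K=\sup_{0\le s<t\le T}\frac{|X_t-X_s|}{(t-s)^{\alpha+\epsilon}}$, which is finite almost surely since $X\in C^{\alpha+\epsilon}$, and put $\Omega_N=\{K\le N\}\uparrow\Omega$. On $\Omega_N\cap\{a\in I_{u,s}\}$ one has $|X_u-a|\le|X_u-X_s|\le N(s-u)^{\alpha+\epsilon}$, so Assumption~\ref{assumption_density} gives, uniformly in $a$,
\begin{equation*}
\P\big(\Omega_N\cap\{a\in I_{u,s}\}\big)\le\int_{a-N(s-u)^{\alpha+\epsilon}}^{a+N(s-u)^{\alpha+\epsilon}}p_u(y)\,\ud y\le 2N\,g(u)\,(s-u)^{\alpha+\epsilon}.
\end{equation*}
Inserting $\mu(I_{u,s})=\int\mathbf{1}_{\{a\in I_{u,s}\}}\,\mu(\ud a)$ into the double integral, applying Tonelli and using $\mu(\R)<\infty$,
\begin{equation*}
\begin{split}
\E\Big[\mathbf{1}_{\Omega_N}\int_0^T\!\!\int_0^s\frac{\mu(I_{u,s})}{(s-u)^{1+\alpha}}\,\ud u\,\ud s\Big]
&\le 2N\mu(\R)\int_0^T\!\!\int_0^s\frac{g(u)}{(s-u)^{1-\epsilon}}\,\ud u\,\ud s\\
&\le \frac{2N\mu(\R)\,T^{\epsilon}}{\epsilon}\,\|g\|_{L^1([0,T])}<\infty ,
\end{split}
\end{equation*}
the last step by Fubini in $(u,s)$. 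Hence the double integral is finite almost surely on each $\Omega_N$; letting $N\to\infty$ (and $M\to\infty$ for the first reduction) completes the proof. One may alternatively replace $\Omega_N$ by the pointwise bound $\mathbf{1}_{\{a\in I_{u,s}\}}\le(|X_u-X_s|/|X_u-a|)^{\gamma}$ with $\tfrac{\alpha}{\alpha+\epsilon}<\gamma<1$ and conclude via Lemma~\ref{lma:assumption}.

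The computation is routine; the one thing that must be done in the right order is the last display — one has to integrate the indicator against $\mu$ \emph{before} using the bounded density of $X$, so that the geometric gain $|X_u-X_s|\le N(s-u)^{\alpha+\epsilon}$ becomes a factor $(s-u)^{\alpha+\epsilon}$ that is uniform in $a$ and hence survives integration against $\mu$. Against the weight $(s-u)^{-(1+\alpha)}$ this leaves the exponent $\epsilon-1>-1$, so the diagonal singularity is integrable precisely because $X$ is H\"older of order $\alpha+\epsilon$ and not merely $\alpha$, which is exactly why the statement is phrased with the surplus $\epsilon$. The only genuine care needed is the bookkeeping of the two nested localizations ($\Omega_M$ to make $\mu$ compactly supported and apply Theorem~\ref{measurethm} with a single constant, $\Omega_N$ to realize the H\"older bound); beyond that there is no serious obstacle.
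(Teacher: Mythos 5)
Your proof is correct, and at the decisive step it takes a genuinely different route from the paper's. Both arguments share the same skeleton: truncation so that the associated measure $\mu$ has compact support, reduction by Jordan decomposition and linearity to a single convex $f$, the representation $f'_-(x)=C+\tfrac12\int \text{sgn}(x-a)\,\mu(\ud a)$ yielding $|f'_-(X_t)-f'_-(X_s)|\le\mu(I_{s,t})$, and the trivial treatment of the first part of the $\Vert\cdot\Vert_{2,\alpha}$-norm. The paper then works pathwise, level by level: for fixed $a$ it introduces the last crossing time $T_t(a)=\sup\{u\le t: X_u=a\}$, bounds the inner $s$-integral by $\alpha^{-1}(t-T_t(a))^{-\alpha}$, converts this via the Garsia--Rodemich--Rumsey inequality (Lemma \ref{lma:GRR}) into $C(\omega)|X_t-a|^{-\theta}$ with $\theta=\frac{\alpha p}{\gamma p-1}<1$, and concludes with Lemma \ref{lma:assumption}. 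You instead localize in the H\"older seminorm (your $\Omega_N$), take expectation \emph{first}, and use Assumption \ref{assumption_density} directly to get $\P(\Omega_N\cap\{a\in I_{u,s}\})\le 2N g(u)(s-u)^{\alpha+\epsilon}$ uniformly in $a$ (for a.e.\ $u$, which suffices since one integrates in $u$), so that Tonelli and $\mu(\R)<\infty$ reduce everything to $\int_0^T\!\int_0^s g(u)(s-u)^{\epsilon-1}\,\ud u\,\ud s<\infty$; almost sure finiteness on each $\Omega_N$ and $\Omega_N\uparrow\Omega$ finish. This is more elementary --- no GRR, no crossing times, no exponent bookkeeping --- and it makes transparent where the surplus $\epsilon$ is spent. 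What the paper's route buys in exchange is a pathwise bound, for each level $a$, of the form $\int_0^t\frac{\mathbf{1}_{X_s<a<X_t}}{(t-s)^{1+\alpha}}\,\ud s\le C(\omega)|X_t-a|^{-\theta}$, which is recycled essentially verbatim in the multidimensional Theorem \ref{multithm}, where the measure $\mu_{Y_t}$ is random and cannot simply be integrated out under one expectation as in your computation; your one-line alternative ($\mathbf{1}_{\{a\in I_{u,s}\}}\le(|X_u-X_s|/|X_u-a|)^{\gamma}$ with $\frac{\alpha}{\alpha+\epsilon}<\gamma<1$, then Lemma \ref{lma:assumption}) is in effect the paper's argument with the H\"older seminorm replacing GRR and recovers such a bound. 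One cosmetic point: the identification $g_i=(F_i)'_-$ holds only up to the at most countable set where the non-decreasing $g_i$ fails to be left-continuous; since $X_t$ has a density for a.e.\ $t$, this discrepancy is invisible in the $W_2^{\alpha}$-norm (the paper glosses over the same point), but a sentence acknowledging it would not hurt.
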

\begin{rmk}
Note that under our additional Assumption \ref{assumption_density} we obtain the following relation between spaces:
$$
C^{\alpha+\epsilon} \subset BV_iC^{\alpha+\epsilon} \subset W_2^{\alpha}
$$
i.e. the space $BV_iC^{\alpha}$ is between the fractional Besov space $W_2^\alpha$ and the space of H\"older continuous functions.
\end{rmk}

\begin{proof}
The proof follows ideas used in \cite{AMV,Heikki}.

Let $Z_t=f(X_t) \in BV_iC^{\alpha+\epsilon}$, where $f \in BV^{loc}$ and $X \in C^{\alpha+\epsilon}([0,T]) $ satisfying Assumption \ref{assumption_density}. Then we need to prove $f(X_t) \in W_2^{\alpha}$, i.e. 
$$
\lVert f(X_t) \rVert_{2, \alpha} \rightarrow \infty, \quad a.s.
$$
Assume $\mathcal{K} := supp(\mu)$ is compact. If the support is not compact, define
\begin{equation*}
\mathcal{K}_n=\{\omega \in \Omega|\sup_{t \in [0,T]} |X_t| \in [0,n]\}
\end{equation*}
and a function $f_n$ as
\begin{equation*}
f_n(x)= \begin{cases} f(-n) & \text{if} \quad x <-n,\\
f(x) & \text{if} \quad -n\leq x \leq n, \\
f(n) & \text{if} \quad x>n.
\end{cases}
\end{equation*}
Then the measure $\mu$ associate with $f_n$ has compact support and $f=f_n$ on $\mathcal{K}_n$. Furthermore, we have
$$
\int_0^T f(X_u)\ud Y_u = \int_0^T f_n(X_u)\ud Y_u
$$
on $\mathcal{K}_n$ and $\Omega = \cup_n \mathcal{K}_n$.

First we have
\begin{equation*}
\int_0^T \frac{|f(X_s)|}{s^\alpha} \ud s \leq \sup_{0 \leq s \leq T}|f(X_s)|\int_0^T \frac{1}{s^\alpha} \ud s < \infty, \quad a.s.
\end{equation*}
Then according to the relation of bounded variation function and convex function and Theorem \ref{measurethm}, we have
\begin{equation*}
\begin{split}
&\int_0^T \int_0^t \frac{|f(X_s)-f(X_t)|}{|t-s|^{1+\alpha}} \ud s \ud t\\
=& \int_0^T \int_0^t \frac{|\int_{\mathcal{K}} \text{sgn}(X_s-a)\mu( \ud a)-\int_{\mathcal{K}} \text{sgn}(X_t-a)\mu( \ud a)|}{|t-s|^{1+\alpha}} \ud s \ud t\\
=& \int_0^T \int_0^t \frac{\int_{\mathcal{K}}(\mathbf{1}_{X_s < a<X_t}+\mathbf{1}_{X_t < a<X_s})\mu( \ud a)}{|t-s|^{1+\alpha}} \ud s \ud t
\end{split}
\end{equation*}
We will only consider the case $\mathbf{1}_{X_s< a<X_t}$ since the other can be treated similarly. By Tonelli's theorem we have
\begin{equation*}
 \int_0^T \!\int_0^t \frac{\int_{\mathcal{K}}(\mathbf{1}_{X_s < a<X_t})\mu(\ud a)}{|t-s|^{1+\alpha}}\,\ud s\ud t =\int_{\mathcal{K}}\Big( \int_0^T \!\int_0^t \frac{\mathbf{1}_{X_s < a<X_t}}{|t-s|^{1+\alpha}}\,\ud s\ud t\Big)\mu(\ud a)
\end{equation*}
Now define
\begin{equation*}
T_t(a):=\sup\{u\in [0,t]: X_u=a\},
\end{equation*}
and let the supremum over an empty set be $0$. When $T_t(a)=0$, then $\mathbf{1}_{X_s< a<X_t}=0$, and thus the above integral is $0$. It is trivial that on the set $\{\omega \in \Omega: a < X_t\}$ we have $T_t(a) < t$ a.s. Therefore
\begin{equation*}
\begin{split}
&\int_0^t \frac{\mathbf{1}_{X_s < a<X_t}}{|t-s|^{1+\alpha}} \ud s\\
\leq& \int_0^{T_t(a)} \frac{\mathbf{1}_{ a<X_t}}{|t-s|^{1+\alpha}} \ud s\\
=& \mathbf{1}_{ a<X_t}\frac{(t-T_t(a))^{-\alpha}-t^{-\alpha}}{\alpha}.
\end{split}
\end{equation*}
Let next $p \geq 1$ and $\gamma >\frac{1}{p}$. By Lemma \ref{lma:GRR} there exist a constant $C=C(p, \gamma) >0$, such that for all $0 \leq s,t \leq T$ we have
\begin{equation*}
{|X_t-X_s|}^p \leq C|t-s|^{\gamma p  -1}\int_0^T\!\int_0^T \frac{|X_u-X_v|^p}{|u-v|^{\gamma p +1}}\,\ud v\ud u.
\end{equation*}
Now let $s=T_t(a)$ to obtain
\begin{equation*}
{|X_t-a|}^p \leq C|t-T_t(a)|^{\gamma p  -1}\int_0^T\!\int_0^T \frac{|X_u-X_v|^p}{|u-v|^{\gamma p +1}}\,\ud v\ud u.
\end{equation*}
Recall that $X \in C^{\alpha+\epsilon}([0,T])$ for some $\alpha \in (0,1)$ and $\epsilon \in (0, 1-\alpha)$. Hence by choosing $\gamma < \alpha+\epsilon$ we obtain
\begin{equation}\label{eq8}
\int_0^T\!\int_0^T \frac{|X_u-X_v|^p}{|u-v|^{\gamma p +1}}\,\ud v\ud u
\leq \int_0^T\!\int_0^T C_1(\omega)|t-s|^{(\alpha+\epsilon-\gamma)p-1}\,\ud v\ud u
<\infty.
\end{equation}

Consequently, we have
\begin{equation*}
|t-T_t(a)|^{-\alpha}\leq C^{-\frac{\alpha}{1-\gamma p}}|X_t-a|^{-\frac{\alpha p}{\gamma p-1}}\Big(\int_0^T\!\int_0^T \frac{|X_u-X_v|^p}{|u-v|^{\gamma p +1}}\,\ud v\ud u\Big)^{-\frac{\alpha}{1-\gamma p}},
\end{equation*}

and by \ref{eq8} we obtain
\begin{equation*}
\int_0^T(t-T_t(a))^{-\alpha}\ud t \leq C(\omega) \int_0^T |X_t-a|^{-\frac{\alpha p}{\gamma p-1}}\ud t.
\end{equation*}
Hence, according to Assumption \ref{assumption_density} and Lemma \ref{lma:assumption}, we get
\begin{equation*}
\mathbb{E}\int_0^T(t-T_t(a))^{-\alpha}\ud t \leq C_2 \mathbb{E}\int_0^T |X_t-a|^{-\frac{\alpha p}{\gamma p-1}}\ud t <\infty
\end{equation*}
provided that $\frac{\alpha p}{\gamma p-1} < 1$. Now this is possible with suitable choices of parameters, e.g. by choosing $\gamma = \alpha +\frac{\epsilon}{2}$ and $p > \frac{2}{\epsilon}$. 
To conclude, we have
\begin{equation*}
\int_0^T\int_0^t \frac{\mathbf{1}_{X_s < a<X_t}}{|t-s|^{1+\alpha}} \ud s \ud t \leq C(\omega)\int_0^T |X_t-a|^{-\frac{\alpha p}{\gamma p-1}}\ud t < \infty \quad a.s.
\end{equation*}
which in turn implies 
\begin{equation*}
 \int_0^T \int_0^t \frac{\int_{\mathcal{K}}(\mathbf{1}_{X_s < a<X_t})\mu( \ud a)}{|t-s|^{1+\alpha}} \ud s \ud t <\infty.
\end{equation*}

\end{proof}
\begin{rmk}\label{rmk:expectation}
We remark that we are not claiming that
$$
\E  \int_0^T \int_0^t \frac{\int_{\mathcal{K}}(\mathbf{1}_{X_s < a<X_t})\mu( \ud a)}{|t-s|^{1+\alpha}} \ud s \ud t <\infty
$$
in general. Indeed, our upper bound depends on the random variable representing H\"older constant of the process $X$ which may or may not have finite moments.
\end{rmk}
Next we will state the existence of integrals by using the previous Theorem. 
\begin{thm}\label{mainthm}
Let $X\in BV_iC^{\alpha}$ and $Y\in C^{\gamma}([0, T])$. If $\alpha+\gamma>1$, then the integral
$$
\int_0^T X_s \ud Y_s
$$
exists almost surely in generalized Lebesgue--Stieltjes sense.
\end{thm}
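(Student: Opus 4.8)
The plan is to deduce the statement from Proposition \ref{pr:n-r} by selecting a single fractional Besov parameter $\beta$ that serves both the integrand $X$ and the integrator $Y$. Since $\alpha+\gamma>1$ and $\alpha<1$, the set $(1-\gamma,\alpha)\cap(0,1)$ is non-empty; I would fix once and for all some $\beta$ in it, so that $1-\gamma<\beta<\alpha<1$.

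For the integrand, set $\epsilon:=\alpha-\beta$. Then $\epsilon\in(0,1-\beta)$ (positivity is $\beta<\alpha$, and $\epsilon<1-\beta$ reduces to $\alpha<1$), and since $X\in BV_iC^\alpha=BV_iC^{\beta+\epsilon}$, Theorem \ref{thm:main} --- applied with $\beta$ playing the role of ``$\alpha$'' and with this $\epsilon$ --- yields $X\in W_2^{\beta}$ on an event $\Omega_0$ of full probability. For the integrator, because $\gamma>1-\beta$ and $0<1-\beta<1$, one may choose $\epsilon'>0$ small enough that $(1-\beta)+\epsilon'\le\gamma$ and $\epsilon'<\beta\wedge(1-\beta)$; then Remark \ref{r:rmk1} gives $Y\in C^{\gamma}\subset C^{(1-\beta)+\epsilon'}\subset W_1^{1-\beta}$ deterministically.

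It remains to invoke Proposition \ref{pr:n-r}. On $\Omega_0$ we have simultaneously $X\in W_2^{\beta}$ and $Y\in W_1^{1-\beta}$, so with this $\beta$ and $t=T$ the proposition asserts that $\int_0^T X_s\,\ud Y_s$ exists as the Lebesgue integral $\int_0^T (D_{0+}^{\beta} X)(s)\,(D_{T-}^{1-\beta}Y_{T-})(s)\,\ud s$ and does not depend on $\beta$; this is exactly the generalized Lebesgue--Stieltjes integral in the statement.

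I do not anticipate a genuine obstacle: all the analytic work is carried by Theorem \ref{thm:main}, and what remains is the parameter bookkeeping above. The one point worth flagging is that the null set outside $\Omega_0$ depends on the chosen $\beta$ and on the particular representation $X=g(h)$ used to place $X$ in $BV_iC^{\alpha}$; since we commit to a single admissible $\beta$ (and a single representation) this is harmless, and there is no need to intersect over a range of $\beta$ because the gLS integral is $\beta$-independent once it exists for one value.
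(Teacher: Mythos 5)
Your proposal is correct and coincides with the paper's own argument: fix $\beta\in(1-\gamma,\alpha)$, get $Y\in W_1^{1-\beta}$ from Remark \ref{r:rmk1}, get $X\in W_2^{\beta}$ from Theorem \ref{thm:main}, and conclude via Proposition \ref{pr:n-r}. The extra parameter bookkeeping you supply (the choices of $\epsilon$ and $\epsilon'$, and the remark on the null set) is just a more explicit version of the same three-line proof.
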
 

\begin{proof}
Choose some $\beta\in(1-\gamma,\alpha)$. Then $Y\in W_1^{1-\beta}$ by Remark \ref{r:rmk1} and $X\in W_2^\beta$ by Theorem \ref{thm:main}. Hence the integral is well-defined by Proposition \ref{pr:n-r}.
\end{proof}

The following theorem is a straightforward consequence and the proofs goes analogously to the proof of similar theorem in \cite{V-S}.
\begin{thm}
Let $X\in BV_iC^{\alpha}$ and $Y\in C^{\gamma}$ with $\alpha+\gamma>1$. Let $\tau\leq T$ be a bounded random time. Then the integral
$$
\int_0^\tau X_s \ud Y_s
$$
exists almost surely in generalized Lebesgue--Stieltjes sense.
\end{thm}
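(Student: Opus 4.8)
The plan is to reduce the case of a bounded random time $\tau$ to the case of a deterministic endpoint $T$, which has already been settled in Theorem \ref{mainthm}. First I would observe that, by Theorem \ref{mainthm} (applied on the interval $[0,T]$), we already have $X\in W_2^\beta([0,T])$ and $Y\in W_1^{1-\beta}([0,T])$ almost surely for a fixed $\beta\in(1-\gamma,\alpha)$, and hence by Proposition \ref{pr:n-r} the integral $\int_0^t X_s\,\ud Y_s$ exists as a generalized Lebesgue--Stieltjes integral for \emph{every} deterministic $t\in(0,T]$. The key point is that the fractional Besov norms are computed over the whole interval $[0,T]$, so the relevant pathwise regularity is a global property of the sample paths and does \emph{not} depend on the endpoint; restricting $X$ and $Y$ to a subinterval $[0,t]$ only decreases the relevant norms. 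Thus, for each fixed $t$, $X\in W_2^\beta([0,t])$ and $Y\in W_1^{1-\beta}([0,t])$ on the same almost sure event.

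Next I would define $\int_0^\tau X_s\,\ud Y_s$ pathwise: on the event $\{\tau = t\}$ set it equal to $\int_0^t X_s\,\ud Y_s$, the latter being the gLS-integral on $[0,t]$ guaranteed by the previous paragraph. To see that this is well-defined almost surely, it suffices to exhibit a single almost sure event $\Omega_0$ on which $X$ and $Y$ have the required global Hölder-type regularity — precisely, $X\in BV_iC^\alpha$ with the representation $X_\cdot = g(X^{(0)}_\cdot)$ and the integrability conclusions of Theorem \ref{thm:main}, and $Y\in C^\gamma([0,T])$. On $\Omega_0$, for \emph{every} $t\in(0,T]$ (not just a fixed one) we have $X\in W_2^\beta([0,t])$ and $Y\in W_1^{1-\beta}([0,t])$, because the defining integrals for $\Vert\cdot\Vert_{2,\beta}$ and $\Vert\cdot\Vert_{1,1-\beta}$ over $[0,t]$ are dominated by the corresponding integrals over $[0,T]$, which are finite on $\Omega_0$. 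Hence $\int_0^t X_s\,\ud Y_s$ exists for all $t$ simultaneously, and the value on $\{\tau=t\}$ is unambiguous. Since $\tau\le T$, the random variable $\int_0^\tau X_s\,\ud Y_s$ is thereby defined on $\Omega_0$, i.e. almost surely.

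The one genuine subtlety — and the step I expect to require the most care — is the interchange of quantifiers ``for a.e.\ $\omega$, for all $t$'' versus ``for all $t$, for a.e.\ $\omega$''. The proof of Theorem \ref{thm:main} produces, for fixed parameters, an almost sure event on which the Besov-norm integral over $[0,T]$ is finite; I must check that this event can be taken independent of $t$. This is immediate once one notes that the Garsia--Rodemich--Rumsey argument in the proof of Theorem \ref{thm:main} bounds everything by the single random quantity $\int_0^T\!\int_0^T |X_u-X_v|^p|u-v|^{-\gamma p-1}\,\ud v\,\ud u$ together with $\sup_{[0,T]}|f(X_s)|$, both of which are finite on one fixed almost sure event and control the analogous quantities on every subinterval $[0,t]$. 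The same monotonicity in the interval applies to $Y\in C^\gamma([0,T])\subset W_1^{1-\beta}([0,t])$ via Remark \ref{r:rmk1}. Granting this, one then invokes Proposition \ref{pr:n-r} on $[0,t]$ for the realized value $t=\tau(\omega)$, and the proof is complete; as noted, this is exactly the argument used for the analogous statement in \cite{V-S}.
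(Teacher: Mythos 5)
Your proposal is correct and is essentially the argument the paper has in mind: the paper only remarks that the result is a straightforward consequence (deferring to the analogous proof in \cite{V-S}), and the content of that argument is exactly what you spell out — on a single almost sure event the Besov norms $\Vert X\Vert_{2,\beta}$ and $\Vert Y\Vert_{1,1-\beta}$ over $[0,T]$ are finite, so by Proposition \ref{pr:n-r} the gLS integral $\int_0^t X_s\,\ud Y_s$ exists for every $t\in(0,T]$ simultaneously, and one evaluates at $t=\tau(\omega)$. Your handling of the quantifier interchange via the global GRR-type bound is the right (and the paper's implicit) justification, so no further changes are needed.
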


%%%%%%%%%%%%%%%%%%%%%%%%%%%%%%%%%%%%%%%%%%%%
\subsection{Multidimensional Processes}

In this section we will show the existence of the stochastic integral in a case where the integrand depends on several processes.

Consider an $n$-dimensional vector $\bar{\alpha} = (\alpha_1,\ldots,\alpha_n)$.

\begin{defn}
Let $X=(X_1,\ldots,X_n)$ be an $n$-dimensional stochastic process on $[0,T]$. We denote $X\in C^{\bar{\alpha}}([0, T])$ if the processes $X_k$ are independent with each other and for every $k$ we have $X_k \in C^{\alpha_k}([0, T])$.
\end{defn}

\begin{thm}\label{multithm}
Let $X\in C^{\bar{\alpha}}([0, T])$ such that every $X_k$ satisfies Assumption \ref{assumption_density} and $Z\in C^{\gamma}([0, T])$. Moreover, let $ f(x_1, \ldots, x_n) \in BV^{loc}$ with respect to every $x_k$. If $\alpha_k+\gamma >1$, for every $k=1, \ldots, n$, then the integral
$$
\int_0^T f(X_t^1, \ldots, X_t^n) \ud Z_s
$$
exists almost surely in generalized Lebesgue--Stieltjes sense.
\end{thm}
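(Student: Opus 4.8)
The plan is to reduce the multidimensional statement to the one-dimensional machinery already developed, exactly as in the proof of Theorem \ref{mainthm}: it suffices to show that the process $t\mapsto f(X_t^1,\ldots,X_t^n)$ lies in $W_2^\beta$ almost surely for some $\beta\in(1-\gamma,\min_k\alpha_k)$, since then $Z\in W_1^{1-\beta}$ by Remark \ref{r:rmk1} and Proposition \ref{pr:n-r} gives the integral. Fix such a $\beta$ (possible because $\alpha_k+\gamma>1$ for every $k$). As in the proof of Theorem \ref{thm:main}, by the truncation argument (replacing $f$ by $f_n$ which agrees with $f$ on $\mathcal K_n=\{\sup_{t}|X_t^k|\le n\ \forall k\}$ and using $\Omega=\cup_n\mathcal K_n$) we may assume that, with respect to each coordinate, the Radon measure $\mu_k(\cdot\,;x_{-k})$ associated to $x_k\mapsto f(x_1,\ldots,x_n)$ has support in a fixed compact set $\mathcal K$, uniformly in the other coordinates. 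The term $\int_0^T |f(X_s^1,\ldots,X_s^n)|s^{-\beta}\,\ud s$ is finite a.s. because $f$ is bounded on $\mathcal K^n$, so the whole problem is to control the double integral
$$
\int_0^T\!\int_0^t \frac{|f(X_t^1,\ldots,X_t^n)-f(X_s^1,\ldots,X_s^n)|}{|t-s|^{1+\beta}}\,\ud s\,\ud t.
$$

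The key step is a telescoping estimate in the coordinates. Write the increment of $f$ between the two points $(X_s^1,\ldots,X_s^n)$ and $(X_t^1,\ldots,X_t^n)$ as a sum of $n$ increments in which only one coordinate changes at a time:
$$
f(X_t^1,\ldots,X_t^n)-f(X_s^1,\ldots,X_s^n)=\sum_{k=1}^n\Big(f(X_t^1,\ldots,X_t^{k},X_s^{k+1},\ldots,X_s^n)-f(X_t^1,\ldots,X_t^{k-1},X_s^{k},\ldots,X_s^n)\Big),
$$
so by the triangle inequality it suffices to bound, for each $k$, the double integral of the $k$-th term. For fixed values of the other coordinates, the $k$-th term is the increment of a one-variable $BV^{loc}$ function evaluated at $X_s^k$ and $X_t^k$, hence by Theorem \ref{measurethm} it equals $\int_{\mathcal K}(\mathbf 1_{X_s^k<a<X_t^k}+\mathbf 1_{X_t^k<a<X_s^k})\,\mu_k(\ud a\,;\cdot)$ with $\mu_k$ the (conditional) second-derivative measure. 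At this point the argument of Theorem \ref{thm:main} applies verbatim to the single process $X^k$: using Tonelli, the stopping-time device $T_t^k(a)=\sup\{u\le t: X_u^k=a\}$, the Garsia--Rademich--Rumsey estimate (Lemma \ref{lma:GRR}) applied to $X^k\in C^{\alpha_k+\epsilon}$, and finally Lemma \ref{lma:assumption} for $X^k$ (which satisfies Assumption \ref{assumption_density}), one gets $\E\int_0^T(t-T_t^k(a))^{-\beta}\,\ud t\le C\,\E\int_0^T|X_t^k-a|^{-\theta}\,\ud t<\infty$ for a suitable exponent $\theta<1$, giving finiteness a.s. of the $k$-th double integral after integrating in $a$ against $\mu_k$ and using that $\mu_k(\mathcal K\,;\cdot)$ is bounded uniformly in the remaining coordinates (as $f$ is $BV^{loc}$ in $x_k$ with compact support). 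Summing over $k$ yields $f(X^1,\ldots,X^n)\in W_2^\beta$ a.s., and the conclusion follows.

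The main obstacle, and the place where independence of the coordinates is genuinely used, is the last bookkeeping step: after conditioning on $(X^j)_{j\neq k}$, one needs the bound $\E\int_0^T|X_t^k-a|^{-\theta}\,\ud t$ to hold for the conditional law of $X^k$ and uniformly in $a$ and in the frozen values of the other coordinates. Independence makes the conditional law of $X^k$ equal to its unconditional law, so Lemma \ref{lma:assumption} applies directly with a constant independent of everything else; without independence one would have to know that conditional densities of $X^k$ are still dominated by an $L^1([0,T])$ function. A secondary, purely technical point is to check that the conditional measures $\mu_k(\ud a\,;x_{-k})$ are measurable in $x_{-k}$ so that Tonelli/Fubini may be applied across the $\mu_k$-integration and the expectation; this is standard since $f$ is jointly measurable and of bounded variation in each variable.
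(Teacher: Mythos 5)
Your overall strategy coincides with the paper's: reduce to showing $f(X^1,\ldots,X^n)\in W_2^\beta$ almost surely for some $\beta\in(1-\gamma,\min_k\alpha_k)$, truncate so that all the one-variable measures have compact support and uniformly bounded total mass, split the increment into single-coordinate increments, represent each such increment through the conditional Radon measure, and close the estimate with the stopping-time and Garsia--Rademich--Rumsey machinery of Theorem \ref{thm:main} together with independence, Assumption \ref{assumption_density} and Lemma \ref{lma:assumption}. The paper organizes the splitting as an induction on $n$ rather than a one-shot telescoping, but that is the same decomposition in disguise.

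The one genuine gap is your claim that the argument of Theorem \ref{thm:main} applies ``verbatim'' to each telescoping term. For $n\ge 3$ and $1<k<n$, your $k$-th term freezes coordinates $1,\ldots,k-1$ at time $t$ and $k+1,\ldots,n$ at time $s$, so the relevant measure is $\mu_k(\ud a;X^1_t,\ldots,X^{k-1}_t,X^{k+1}_s,\ldots,X^n_s)$, which depends on \emph{both} $s$ and $t$. The crucial step of Theorem \ref{thm:main} is to integrate out $s$ first, using $T_t(a)$ and Lemma \ref{lma:GRR}, replacing $(t-s)^{-1-\beta}$ by $(t-T_t(a))^{-\beta}\le C(\omega)|X^k_t-a|^{-\theta}$ with $\theta<1$; this requires the measure to be held fixed while $s$ is integrated, which is exactly what fails for the mixed-time terms. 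A pointwise substitute, namely $\mathbf{1}_{X^k_s<a<X^k_t}(t-s)^{-1-\beta}\le K^{(1+\beta)/\alpha_k}|X^k_t-a|^{-(1+\beta)/\alpha_k}$ from H\"older continuity, is too lossy because the exponent exceeds $1$ and Lemma \ref{lma:assumption} no longer applies; and uniform boundedness of the total mass does not help, since a family of measures with bounded mass need not be dominated by a single finite measure. The paper's induction is precisely the device that avoids mixed-time measures: the two-variable case is first summarized as the $(\P,\ud t)$-a.e.\ inner-integral bound (\ref{eq:induction_step}), in which both measures depend on time-$t$ values only (after relabelling the double integral), and only then is the next coordinate frozen. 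To repair your version, either reorganize the telescoping inductively as in the paper, or add a separate argument for the mixed-time terms, for instance conditioning on the other coordinates, localizing on level sets $\{K\le m\}$ of the H\"older constant of $X^k$, and using $\P(X^k_s<a<X^k_t,\,K\le m)\le 2m\,g(t)(t-s)^{\alpha_k}$ from Assumption \ref{assumption_density}, which is summable against $(t-s)^{-1-\beta}$ since $\alpha_k>\beta$. As written, the ``verbatim'' step does not go through.
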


\begin{proof}
We will prove the result by induction. We begin with the case $n=2$ to illustrate our approach while case $n=1$ is in fact Theorem \ref{thm:main}.

\emph{Step 1.} When $n=2$, we have $f=f(x,y)$. Choose some $\beta \in (1-\gamma, \min(\alpha_x, \alpha_y))$, then $Z \in W_1^{1-\beta}$. According to Proposition \ref{pr:n-r}, the generalized Lebesgue-Stieltjes integral exists if we have
\begin{equation*}
\lVert f(X_{\cdot}, Y_{\cdot}) \rVert_{2, \beta} < \infty, \quad a.s.
\end{equation*}
Clearly, for the first part in the norm we have
\begin{equation*}
\int_0^T \frac{| f(X_t, Y_t)|}{t^\beta}\ud t \leq \sup_{0\leq t \leq T}|f(X_t, Y_t)|\int_0^T \frac{1}{t^\beta}\ud t < \infty.
\end{equation*}
For the second part in $\lVert f(X_t, Y_t) \rVert_{2, \beta}$, we write
\begin{equation*}
\begin{split}
&| f(X_t, Y_t)- f(X_s, Y_s)|\\
=&| f(X_t, Y_t)- f(X_s, Y_t)+ f(X_s,Y_t)- f(X_s, Y_s)|\\
\leq &| f(X_t, Y_t)-f(X_s, Y_t)|+| f(X_s,Y_t)-f(X_s, Y_s)|.
\end{split}
\end{equation*}
The key idea for the proof is that for each pair above, the other variable is fixed and we treat the function $ f(x, y)$ as a function of one variable, say $x$ while $y$ is fixed. Consequently, we can apply representation (\ref{rep_convex_aux}) with respect to variable $x$ such that the corresponding Radon measure $\mu = \mu_y$ depends on the fixed variable $y$. 

Next we proceed similarly as in the one-dimensional case. First define a set
\begin{equation*}
\Omega_n = \{\omega \in \Omega: \max\Big(\sup_{t \in [0, T]}|X_t|, \sup_{t \in [0, T]}|Y_t|\Big) \in [0,n]\}, \quad n \in \mathbf{N}
\end{equation*}
and an auxiliary function $f_n$ by 
\begin{equation*}
f_n(x,y)=\\ 
\begin{cases}
f(-n,n),   & x<-n, y>n\\
f(-n,-n), & x<-n, y<-n\\ 
f(x,n), & x\in [-n,n], y>n\\
f(x,-n), & x\in [-n,n], y<-n\\
f(x,y), & -n \leq x \leq n, -n \leq y \leq n, \\
f(n,y), & x>n, y\in[-n,n]\\
f(-n,y), & x<-n, y\in [-n,n]\\
f(n,-n), & x>n, y<-n\\
f(n,n),   & x>n, y>n.
\end{cases}
\end{equation*}
Now the measure $\mu$ associated to $f_n$ has compact support, $ f_n=f$ on $\Omega_n$ and $f(x,y)$ is uniformly bounded. Furthermore, $\Omega = \cup_n \Omega_n$ which gives the general case. Consequently, we can assume without loss of generality that $f(x,y)$ is uniformly bounded and that all the corresponding Radon measures have compact support on $[-n,n]$.

For part $| f(X_t,Y_t)- f(X_s, Y_t)|$ we get by applying representation (\ref{rep_convex_aux})
\begin{equation*}
\begin{split}
&| f(X_t, Y_t)- f(X_s, Y_t)|\\
=&  \int_{-n}^n \text{sgn}(X_t-a)\mu_{Y_t}(\ud a)- \int_{-n}^n \text{sgn}(X_s-a)\mu_{Y_t}(\ud a)\\
=&\int_{-n}^n\Big(\mathbf{1}_{X_s <a<X_t}+ \mathbf{1}_{X_t<a<X_s}\Big) \mu_{Y_t}( \ud a).
\end{split}
\end{equation*}

By symmetry, what we need to show is that
\begin{equation*}
\int_0^T \int_0^t \int_{-n}^n\frac{(\mathbf{1}_{X_s < a<X_t})}{|t-s|^{1+\beta}} \, \mu_{Y_t}(\ud a) \ud s \ud t <\infty.
\end{equation*}

Thus by following the arguments of the proof of Theorem \ref{thm:main} we have
\begin{equation*}
|t-T_t(a)|^{-\beta}\leq C_1^{-\frac{\beta}{1-\gamma p}}|X_t-a|^{-\frac{\beta p}{\gamma p-1}}\Big(\int_0^T\!\int_0^T \frac{|X_u-X_v|^p}{|u-v|^{\gamma p +1}}\,\ud v\ud u\Big)^{-\frac{\beta}{1-\gamma p}},
\end{equation*}
where the double integral is almost surely finite. Hence we only need to show that
\begin{equation}\label{inequ}
\mathbb{E}\int_0^T \int_{-n}^n |X_t-a|^{-\frac{\beta p}{\gamma p-1}} \mu_{Y_t}(\ud a) \ud t <\infty.
\end{equation}

By independence, Assumption \ref{assumption_density} and Lemma \ref{lma:assumption}, we have
\begin{equation*}
\begin{split}
&\mathbb{E}\int_0^T \int_{-n}^n |X_t-a|^{-\frac{\beta p}{\gamma p-1}} \mu_{Y_t}(\ud a) \ud t \\
=&\int_0^T \int_{\R}\int_{-n}^n\E |X_t-a|^{-\frac{\beta p}{\gamma p-1}}\mu_y(\ud a)\P(Y_t \in \ud y)\ud t\\
\leq &\int_0^T \int_{\R}\int_{-n}^n C g(t)\mu_y(\ud a)\P(Y_t \in \ud y)\ud t\\
=&  C\int_0^T g(t) \int_{\R}\int_{-n}^n \mu_y(\ud a)\P(Y_t \in \ud y)\ud t\\
=&  C \int_0^T g(t) \int_{\R}[f(n+\epsilon, y)-f(-n-\epsilon, y)]\P(Y_t \in \ud y)\ud t\\
\leq & C \int_0^T g(t) \ud t\\
<&  \infty,
\end{split}
\end{equation*}
where the third equality comes from the following
\begin{equation*}
\begin{split}
&f(n+\epsilon, y)-f(-n-\epsilon, y)\\
&=\int_{-n}^n \text{sgn}(n+\epsilon-a) \mu_y(\ud a)-\int_{-n}^n \text{sgn}(-n-\epsilon-a) \mu_y(\ud a)\\
=&\int_{-n}^n \mu_y(\ud a) + \int_{-n}^n \mu_y(\ud a)\\
=&2\int_{-n}^n \mu_y(\ud a)
\end{split}
\end{equation*}
and the last inequality comes from the fact
$$
\sup_y |f(n+\epsilon, y)-f(-n-\epsilon, y)| < \infty
$$
since we can assume that $f$ is uniformly bounded in both variables.
This implies
\begin{equation*}
\int_0^T \int_0^t \int_{-n}^n\frac{(\mathbf{1}_{X_s < a<X_t})}{|t-s|^{1+\beta}} \, \mu_{Y_t}(\ud a) \ud s \ud t <\infty.
\end{equation*}

Now turn to the part $| f(X_s,Y_t)- f(X_s, Y_s)|$. By the locally bounded variation property and Fubini's Theorem, we only need to consider 
\begin{equation*}
\int_0^T\int_0^t  \int_{-n}^n\frac{\mathbf{1}_{Y_s < a<Y_t}}{|t-s|^{1+\beta}} \, \mu_{X_s}(\ud a) \ud s \ud t =\int_0^T\int_s^T \int_{-n}^n \frac{\mathbf{1}_{Y_s < a<Y_t}}{|t-s|^{1+\beta}}  \, \mu_{X_s}(\ud a)\ud t \ud s.
\end{equation*}

Now define $T_s(a):=\inf\{t\in [s,T]: Y_t=a\}$. Again by repeating arguments of the proof of Theorem \ref{thm:main} we obtain
\begin{equation*}
|T_s(a)-s|^{-\beta}\leq C_1^{-\frac{\beta}{1-\gamma p}}|a-Y_s|^{-\frac{\beta p}{\gamma p-1}}\Big(\int_0^T\!\int_0^T \frac{|Y_u-Y_v|^p}{|u-v|^{\gamma p +1}}\,\ud v\ud u\Big)^{-\frac{\beta}{1-\gamma p}}.
\end{equation*}
Consequently we only need to show that
\begin{equation*}
\mathbb{E}\int_0^T \int_{-n}^n |Y_s-a|^{-\frac{\beta p}{\gamma p-1}} \mu_{X_s}(\ud a) \ud s <\infty
\end{equation*}
which is actually condition (\ref{inequ}). To summarize, we have proved that 
\begin{equation*}
\begin{split}
&\int_0^T \int_0^T\int_0^t \frac{|f(X_t,Y_t) - f(X_s,Y_s)|}{(t-s)^{\beta+1}}\ud s\ud t\\
&\leq C_T\int_0^T \int_{-n}^n |X_t-a|^{-\frac{\beta p}{\gamma_1 p-1}} \mu_{Y_t}(\ud a) \ud t\\
&+ C_T\int_0^T \int_{-n}^n |Y_t-a|^{-\frac{\beta p}{\gamma_2 p-1}} \mu_{X_t}(\ud a) \ud t
\end{split}
\end{equation*}
for some parameters $\gamma_1$ and $\gamma_2$. Furthermore, by examining the proof we note that the random constant $C_T$ (depending on H\"older constants of $X$ and $Y$) is increasing in $T$. Hence we have 
\begin{equation*}
\begin{split}
&\int_0^v \int_0^t\frac{|f(X_t,Y_t) - f(X_s,Y_s)|}{(t-s)^{\beta+1}}\ud s\ud t\\
&\leq C_T\int_0^v \int_{-n}^n |X_t-a|^{-\frac{\beta p}{\gamma_1 p-1}} \mu_{Y_t}(\ud a) \ud t\\
&+ C_T\int_0^v \int_{-n}^n |Y_t-a|^{-\frac{\beta p}{\gamma_2 p-1}} \mu_{X_t}(\ud a) \ud t
\end{split}
\end{equation*}
for every $v<T$ which in turn implies that
\begin{equation}
\label{eq:induction_step}
\begin{split}
& \int_0^t\frac{|f(X_t,Y_t) - f(X_s,Y_s)|}{(t-s)^{\beta+1}}\ud s \\
&\leq C_T \int_{-n}^n |X_t-a|^{-\frac{\beta p}{\gamma_1 p-1}} \mu_{Y_t}(\ud a)\\
&+ C_T\int_{-n}^n |Y_t-a|^{-\frac{\beta p}{\gamma_2 p-1}} \mu_{X_t}(\ud a),
\end{split}
\end{equation}
where the inequality holds $(\P,\ud t)$-almost surely.

\emph{Step 3.} When $n=3$, we write $f(X^1, X^2, Y)=f(\bar{X}, Y)$. Now we have
\begin{equation*}
\begin{split}
|f(\bar{X}_t, Y_t)- f(\bar{X}_s, Y_s)|= |f(\bar{X}_t, Y_t)- f(\bar{X}_s, Y_t)|+|f(\bar{X}_s, Y_t)- f(\bar{X}_s, Y_s)|.
\end{split}
\end{equation*}
For the latter part, we can apply arguments of Step 2 except that now the Radon measure depends on $\bar{X}_s$.

For the first part, we note that for fixed $\omega$ and $t$ the random variable $Y_t$ is just a number which affects only to the corresponding Radon measures $\mu$. Consequently, by applying (\ref{eq:induction_step} we obtain for the inner integral
\begin{equation*}
\begin{split}
& \int_0^t\frac{|f(\bar{X}_t,Y_t) - f(\bar{X}_s,Y_t)|}{(t-s)^{\beta+1}}\ud s \\
&\leq C_T \int_{-n}^n |X^1_t-a|^{-\frac{\beta p}{\gamma_1 p-1}} \mu_{X^2_t,Y_t}(\ud a)\\
&+ C_T\int_{-n}^n |X^2_t-a|^{-\frac{\beta p}{\gamma_2 p-1}} \mu_{X^1_t,Y_t}(\ud a).
\end{split}
\end{equation*}
Hence the result follows by arguments of Step 2. To conclude, the case $n>3$ follows by repeating the same argument.
\end{proof}
\begin{rmk}
We remark that assumption of independent components $X^k$ is merely a simplification. Indeed, the result remains valid if the conditional variables $X^k$ given the other $n-1$ variables satisfies Assumption \ref{assumption_density}.
\end{rmk}

%%%%%%%%%%%%%%%%%%%%%%%%%%%%%%%%%%%%%%%%%%%
\subsection{Existence of mixed integrals}
In this section we briefly explain the existence of certain type of mixed integrals which can be particularly important for applications such as mathematical finance. Indeed, consider a process $Y=X+W+J$, where $X$ is H\"older continuous process of order $\alpha>\frac{1}{2}$, $W$ is a standard Brownian motion (or more generally, any local martingale with absolutely continuous quadratic variation), and $J$ is a jump process. Such model can be very useful in different applications and mixed models have received increasing attention in the literature. For example in finance, white noise $W$ represents independent agents acting in the market, $X$ can capture different stylized facts such as long range or short range dependence, and $J$ represents the market shocks.

For such a model the notion of stochastic integral with respect to a martingale $W$ or a jump process $J$ is quite clear; the integral with respect to $W$ is understood in the It\^o sense and the integral with respect to a jump process $J$ is defined in the usual sense
$$
\int_0^T H_s \ud J_s := \sum_{s\leq T} H_{s-}\Delta J_s,
$$
where $\Delta J_s$ denotes the jump at point $s$. The pathwise stochastic integral with respect to $X$ is the topic of the next theorem. 

\begin{thm}\label{mixed}
Let $Y=X+M+J$ be a mixed process, where $X \in C^{\alpha_1}([0,T])$ with $\alpha_1 > \frac{1}{2}$, $M=(M_t)_{t \in [0,T]}$ is a continuous martingale with absolutely continuous quadratic variation and $J=(J_t)_{t \in [0,T]}$ is any process satisfying $\mathbb{P}(J_s \neq J_t) \leq C|t-s|^{\alpha_2}$ with $\alpha_2 > 1-\alpha_1$. Furthermore, assume that $X+M$ satisfies Assumption \ref{assumption_density} and that $J$ is independent of $X+M$. Let $f \in BV^{loc}$, then the integral
$$
\int_0^T f(Y_s) \ud X_s
$$
exists almost surely in generalized Lebesgue--Stieltjes sense.
\end{thm}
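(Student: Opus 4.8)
The plan is to reduce the statement to an application of Theorem \ref{thm:main} (or rather its proof), combined with the decomposition of $Y$ and the stochastic analysis of the martingale part. As in the proof of Theorem \ref{mainthm}, fix $\beta\in(1-\alpha_1,\alpha_1)\cap(1-\alpha_1,\tfrac12)$; then $X\in W_1^{1-\beta}$ by Remark \ref{r:rmk1}, so by Proposition \ref{pr:n-r} it suffices to show $\|f(Y_\cdot)\|_{2,\beta}<\infty$ almost surely. As usual, by the truncation argument of Theorem \ref{thm:main} we may assume $f$ is bounded and the associated Radon measure $\mu$ has compact support $\mathcal{K}=[-n,n]$, and the first term $\int_0^T |f(Y_s)|s^{-\beta}\,\ud s$ is trivially finite. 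So the whole problem is to bound
\begin{equation*}
\int_0^T\!\int_0^t \frac{|f(Y_t)-f(Y_s)|}{|t-s|^{1+\beta}}\,\ud s\,\ud t
= \int_{\mathcal K}\Big(\int_0^T\!\int_0^t\frac{\mathbf 1_{Y_s<a<Y_t}+\mathbf 1_{Y_t<a<Y_s}}{|t-s|^{1+\beta}}\,\ud s\,\ud t\Big)\mu(\ud a),
\end{equation*}
using the representation (\ref{rep_convex_aux}) exactly as in Theorem \ref{thm:main}.

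Next I would split $Y=Z+J$ with $Z=X+M$. The event $\{Y_s<a<Y_t\}$ forces either $Z$ or $J$ to cross (or be separated about) the level $a$ between $s$ and $t$; more precisely one can dominate $\mathbf 1_{Y_s<a<Y_t}\le \mathbf 1_{Z \text{ hits a neighbourhood of } a \text{ on }[s,t]} + \mathbf 1_{J_s\neq J_t}$, since if $J$ does not jump on $[s,t]$ then $Y_t-Y_s=Z_t-Z_s$ and the crossing is caused by $Z$ alone. For the $J$-contribution, independence of $J$ from $Z$, the assumption $\P(J_s\neq J_t)\le C|t-s|^{\alpha_2}$ with $\alpha_2>1-\alpha_1>\beta$, and boundedness of $\mu$ give after taking expectations a bound of the form $\int_{\mathcal K}\mu(\ud a)\int_0^T\!\int_0^t |t-s|^{\alpha_2-1-\beta}\ud s\,\ud t<\infty$, since $\alpha_2-\beta>0$. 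For the $Z$-contribution I would run the Garsia--Rademich--Rumsey / first-hitting-time argument of Theorem \ref{thm:main}: the process $Z=X+M$ is H\"older continuous of every order $<\tfrac12$ (Brownian-type regularity of $M$ plus $\alpha_1>\tfrac12$ for $X$), set $T_t(a)=\sup\{u\le t: Z_u=a\}$, use Lemma \ref{lma:GRR} with suitable $p,\gamma$ to get $|t-T_t(a)|^{-\beta}\le C(\omega)|Z_t-a|^{-\frac{\beta p}{\gamma p-1}}$, and then invoke Assumption \ref{assumption_density} for $Z=X+M$ together with Lemma \ref{lma:assumption} to conclude $\E\int_{\mathcal K}\int_0^T(t-T_t(a))^{-\beta}\,\ud t\,\mu(\ud a)<\infty$, exactly provided $\frac{\beta p}{\gamma p-1}<1$, achievable because $\beta<\alpha_1$ with the choices $\gamma=\beta+\tfrac\epsilon2$, $p>\tfrac2\epsilon$ as before. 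Finiteness of the expectation gives almost-sure finiteness of each inner integral; the $\mathbf 1_{Y_t<a<Y_s}$ term is handled symmetrically using $T_t(a)=\inf\{u\ge t:\dots\}$-type quantities as in Step 2 of Theorem \ref{multithm}.

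The main obstacle is the interaction of the jump part $J$ with the H\"older/first-hitting machinery: one has to be careful that the domination $\mathbf 1_{Y_s<a<Y_t}\le \mathbf 1_{\{Z \text{ reaches near } a\}}+\mathbf 1_{\{J_s\neq J_t\}}$ is set up so that the ``$Z$ reaches near $a$'' event is genuinely controlled by a first-hitting time of the \emph{continuous} process $Z$ — so that GRR applies — rather than by $Y$ itself, which is not continuous. Concretely, on $\{J_s=J_t\}$ one has $Z_s<a-J_s<Z_t$, so the relevant hitting time is $T_t(a-J_s)$, and since $J_s$ is $\mathcal F$-measurable and independent of $Z$, conditioning on $J$ reduces this to Lemma \ref{lma:assumption} applied with the deterministic shift $f(t)\equiv -J_s(\omega)$ — which is precisely why that lemma was stated for arbitrary measurable shifts. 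Once this conditioning step is organized correctly, the rest is a routine repetition of the estimates in Theorems \ref{thm:main} and \ref{multithm}.
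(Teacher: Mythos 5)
Your proposal follows essentially the same route as the paper's proof: reduce via Proposition \ref{pr:n-r} to showing $\lVert f(Y_\cdot)\rVert_{2,\beta}<\infty$ a.s., use representation (\ref{rep_convex_aux}) after truncation, dominate $\mathbf{1}_{Y_s<a<Y_t}\le \mathbf{1}_{\tilde X_s<a-J_t<\tilde X_t}+\mathbf{1}_{J_s\neq J_t}$ with $\tilde X=X+M$, treat the continuous part by the hitting-time/Garsia--Rademich--Rumsey argument of Theorem \ref{thm:main} after conditioning on $J$ (exactly where Lemma \ref{lma:assumption} with a measurable shift enters), and treat the jump part by taking expectations, Tonelli, and $\P(J_s\neq J_t)\le C|t-s|^{\alpha_2}$. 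One genuine (though easily repaired) slip: your choice $\beta\in(1-\alpha_1,\tfrac12)$ does not force $\beta<\alpha_2$, and your justification ``$\alpha_2>1-\alpha_1>\beta$'' is backwards, since $\beta>1-\alpha_1$ by construction; if $\alpha_2<\tfrac12$ you could have $\beta\ge\alpha_2$, and then $\int_0^t(t-s)^{\alpha_2-1-\beta}\,\ud s$ diverges and the jump estimate fails. The repair is exactly the paper's choice $\beta\in\bigl(1-\alpha_1,\min(\tfrac12,\alpha_2)\bigr)$, which is a nonempty interval because $\alpha_2>1-\alpha_1$ and $\alpha_1>\tfrac12$. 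Two cosmetic points: the feasibility of $\frac{\beta p}{\gamma p-1}<1$ rests on $\beta<\tfrac12$, the H\"older order available for $\tilde X=X+M$, not on $\beta<\alpha_1$; and the shift fed into Lemma \ref{lma:assumption} after conditioning should be the function $t\mapsto J_t(\omega)-a$ (the domination is set up with the level $a-J_t$, which is constant in the inner $s$-variable so that the hitting time of the continuous process $\tilde X$ applies), not the constant $-J_s(\omega)$ --- on $\{J_s=J_t\}$ these agree, so your argument goes through once stated with $J_t$.
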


\begin{proof}
Note first that since $M$ is a martingale with absolutely continuous quadratic variation, it follows (see \cite{R-Y}) that $M$ is H\"older continuous of order $H$ for any $H<\frac{1}{2}$. Consequently, the process $\tilde{X}=X + M$ is also H\"older continuous of order $H$. Choose now $\beta\in\left(1-\alpha_1,\min\left(\frac{1}{2},\alpha_2\right)\right)$. Consequently, we have
$
\tilde{X} \in W_1^{1-\beta}
$
and it is sufficient to show 
\begin{equation*}
\lVert f(Y_s)\rVert_{2, \beta} < \infty   \quad a.s.
\end{equation*}

It is trivial that
\begin{equation*}
\int_0^T \frac{|f(Y_s)|}{s^\beta}\ud s \leq \sup_{0 \leq s \leq T}|f(Y_s)|\int_0^T \frac{1}{s^\beta} < \infty.
\end{equation*}

Then use representation (\ref{rep_convex_aux}) to have
\begin{equation*}
\begin{split}
\int_0^T \int_0^t \frac{|f(Y_s)-f(Y_t)|}{|t-s|^{1+\beta}}\ud s\ud t&= \int_0^T \int_0^t \frac{|\int_{\mathcal{K}} \text{sgn}(Y_s-a)\mu(\ud a)-\int_{\mathcal{K}} \text{sgn}(Y_t-a)\mu(\ud a)|}{|t-s|^{1+\beta}}\ud s \ud t\\
&= \int_0^T \int_0^t \frac{\int_{\mathcal{K}}(\mathbf{1}_{Y_s < a<Y_t}+\mathbf{1}_{Y_t < a<Y_s)}\mu(\ud a)}{|t-s|^{1+\beta}}\ud s\ud t
\end{split}
\end{equation*}

As before, we only consider $\mathbf{1}_{Y_s < a<Y_t}$ since the other one can be treated similarly. Note first that since $\tilde{X}$ satisfies Assumption \ref{assumption_density} and $J$ is independent of $\tilde{X}$, it follows that the process $Y=X+M+J$ has continuous distribution. Now
\begin{equation*}
\begin{split}
\mathbf{1}_{Y_s< a<Y_t}& = \mathbf{1}_{\tilde{X}_s+J_s <a < \tilde{X}_t+J_t} \\
&=  \mathbf{1}_{\tilde{X}_s< a-J_t<\tilde{X}_t}\mathbf{1}_{J_s=J_t}+\mathbf{1}_{\tilde{X}_s+J_s <a < \tilde{X}_t+J_t}\mathbf{1}_{J_s \neq J_t}\\
&\leq \mathbf{1}_{\tilde{X}_s< a-J_t<\tilde{X}_t}+\mathbf{1}_{J_t \neq J_s}.
\end{split}
\end{equation*}
Hence we get
\begin{equation*}\label{eq2}
\begin{split}
& \int_0^T \int_0^t \frac{\int_{\mathcal{K}}(\mathbf{1}_{Y_s< a<Y_t})\mu(\ud a)}{|t-s|^{1+\beta}}\ud s\ud t\\
&\leq \int_0^T \int_0^t \frac{\int_{\mathcal{K}}\mathbf{1}_{\tilde{X}_s< a-J_t<\tilde{X}_t}\mu(\ud a)}{|t-s|^{1+\beta}}\ud s\ud t+\int_0^T \int_0^t \frac{\int_{\mathcal{K}}\mathbf{1}_{J_s \neq J_t}\mu(\ud a)}{|t-s|^{1+\beta}}\ud s\ud t
\end{split}
\end{equation*}

By conditioning, independence of $J$ and $\tilde{X}$, and the proof of Theorem \ref{thm:main} we obtain that the first integral is finite. For the second integral we take expectation and by Tonelli's Theorem we have
\begin{equation*}
\begin{split}
&\mathbb{E}\int_0^T \int_0^t \frac{\int_{\mathcal{K}}\mathbf{1}_{J_s \neq J_t}\mu(\ud a)}{|t-s|^{1+\beta}}\ud s\ud t\\
&=\int_0^T \int_0^t \frac{\int_{\mathcal{K}}\mathbb{P}(J_s \neq J_t)\mu(\ud a)}{|t-s|^{1+\beta}}\ud s \ud t\\
& \leq C\int_{\mathcal{K}}\int_0^T \int_0^t (t-s)^{\alpha_2-1-\beta}\ud s \ud t \, \mu(\ud a)\\
& < \infty
\end{split}
\end{equation*}
according to our assumption $\mathbb{P}(J_s \neq J_t) \leq C|t-s|^{\alpha_2}$ for some $\alpha_2 > 1-\alpha_1$. 
\end{proof}
\begin{rmk}
A natural choice for the process $J$ is a compound Poisson process given by
$$
J_t = \sum_{k=1}^{N_t} Z_k,
$$
where $N_t$ is a Poisson process and $Z_k$ is any sequence of identically distributed and independent random variables which are also independent of $N_t$. Note however, that $Z_k$:s are not needed to be independent or identically distributed. Similarly, $N_t$ can be replaced with some other jump process. One possible example is fractional Poisson process studied, e.g. by Laskin \cite{laskin}. 
\end{rmk}
\begin{rmk}
Note also that $J$ does not need to be independent of $\tilde{X}$ as long as the conditional variable $\tilde{X}|J$ satisfies Assumption \ref{assumption_density}.
\end{rmk}
%%%%%%%%%%%%%%%%%%%%%%%%%%%%%%%%%%%%%%%%%%%%%%%%%%%%%%%%%
\subsection{It\^o formula}
In this section we consider the special case $Y=X$, where $X$ is some $\alpha$-H\"older process with $\alpha>\frac{1}{2}$ satisfying Assumption \ref{assumption_density}, and prove the It\^o formula for such case. We begin with the following smooth version which follows by standard arguments.

\begin{thm}\label{thmito}
Let $X \in C^{\alpha}([0,T])$ with $ \alpha>\frac{1}{2}$, and $f \in C^2(\mathbf{R})$. Then 
$$
f(X_t)=f(X_0)+\int_0^t f'(X_s) \ud X_s.
$$
\end{thm}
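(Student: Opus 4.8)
The plan is to reduce the statement to an approximation argument combined with the generalized Lebesgue--Stieltjes estimate of Theorem \ref{t:n-r}. Since $X \in C^{\alpha}([0,T])$ with $\alpha > \frac{1}{2}$ and $f \in C^2(\mathbf{R})$, the composition $f'(X)$ is itself $\alpha$-H\"older continuous on $[0,T]$ (as $f'$ is locally Lipschitz and $X$ has continuous, hence bounded, paths). Consequently, for any $\beta \in (1-\alpha, \alpha)$ we have $f'(X) \in W_2^{\beta}$ and $X \in W_1^{1-\beta}$ by Remark \ref{r:rmk1}, so the integral $\int_0^t f'(X_s)\,\ud X_s$ is well-defined as a generalized Lebesgue--Stieltjes integral by Proposition \ref{pr:n-r}; moreover, by Remark \ref{rmk:coincide} it coincides with the Riemann--Stieltjes integral, i.e.\ it is the limit of Riemann--Stieltjes sums.

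First I would fix a partition $\pi_n = \{0 = t_0 < \cdots < t_{k(n)} = t\}$ of $[0,t]$ with $|\pi_n| \to 0$ and write the telescoping identity
\begin{equation*}
f(X_t) - f(X_0) = \sum_{j=1}^{k(n)} \bigl( f(X_{t_j}) - f(X_{t_{j-1}}) \bigr).
\end{equation*}
By Taylor's theorem applied to $f \in C^2$, for each $j$ there is $\xi_j$ between $X_{t_{j-1}}$ and $X_{t_j}$ with
\begin{equation*}
f(X_{t_j}) - f(X_{t_{j-1}}) = f'(X_{t_{j-1}})\,(X_{t_j} - X_{t_{j-1}}) + \tfrac{1}{2} f''(\xi_j)\,(X_{t_j} - X_{t_{j-1}})^2.
\end{equation*}
The first sum $\sum_j f'(X_{t_{j-1}})(X_{t_j} - X_{t_{j-1}})$ is exactly a Riemann--Stieltjes sum, which converges to $\int_0^t f'(X_s)\,\ud X_s$ by the above. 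For the remainder, one bounds $|f''(\xi_j)| \le \|f''\|_{\infty, [-R, R]}$ where $R = \sup_{s \le T}|X_s| < \infty$, and uses the $\alpha$-H\"older bound $|X_{t_j} - X_{t_{j-1}}| \le C_X |\pi_n|^{\alpha}$ to get
\begin{equation*}
\Bigl| \tfrac{1}{2}\sum_{j=1}^{k(n)} f''(\xi_j)(X_{t_j} - X_{t_{j-1}})^2 \Bigr| \le \tfrac{1}{2}\|f''\|_{\infty}\, C_X^2\, |\pi_n|^{2\alpha - 1}\, t \xrightarrow[n\to\infty]{} 0,
\end{equation*}
since $2\alpha - 1 > 0$. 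Letting $n \to \infty$ in the telescoping identity then yields $f(X_t) - f(X_0) = \int_0^t f'(X_s)\,\ud X_s$.

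The only genuinely delicate point is ensuring that the Riemann--Stieltjes sums really do converge to the generalized Lebesgue--Stieltjes integral along \emph{any} sequence of partitions with vanishing mesh; this is precisely the content of Remark \ref{rmk:coincide} (equivalently, Young's theory, since $f'(X)$ and $X$ are H\"older of orders summing to more than $1$), so it can be quoted directly. Everything else is a routine consequence of the $C^2$ assumption and the quadratic vanishing of the Taylor remainder under $2\alpha > 1$; I do not expect any obstacle there. One could alternatively bypass Taylor's theorem and invoke F\"ollmer's It\^o formula (Lemma \ref{ito-follmer}) after noting that an $\alpha$-H\"older process with $\alpha > \frac{1}{2}$ has zero quadratic variation along any partition sequence, which kills the second-order term directly; but the elementary Taylor argument above is cleaner and self-contained.
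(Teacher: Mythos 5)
Your proposal is correct and follows essentially the same route as the paper: the paper's proof is exactly the Taylor-expansion argument in which the second-order term is killed because an $\alpha$-H\"older path with $\alpha>\frac{1}{2}$ has vanishing quadratic variation, the paper merely stating this in one line via the F\"ollmer-type formula while you spell out the telescoping sum, the identification of the first-order Riemann--Stieltjes sums with the generalized Lebesgue--Stieltjes integral (Remark \ref{rmk:coincide}), and the explicit $|\pi_n|^{2\alpha-1}$ bound on the remainder. Your version is simply a more detailed write-up of the same idea, with no gap.
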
 

\begin{proof}
By Taylor expansion we have
\begin{equation*}
\begin{split}
 f(X_T)=&f(X_0)+  \int_0^T f'(X_t) \ud X_t+\frac{1}{2} \int_0^T  f''(X_t) \ud [X_t, X_t].
\end{split}
\end{equation*}
Then because $X_t$ has zero quadratic variation, we have
\begin{equation*}
 f(X_T)=f(X_0)+  \int_0^T f'(X_t) \ud X_t.
\end{equation*}

\end{proof}

%%%%%%%%%%%%%%%%%%%%%%%%%%%%%%%%%%%%%%
Next theorem is our main result in this section, and in the proof we fill in the gaps existing in the literature.
\begin{thm}\label{thmito_convex}
Let $X \in C^{\alpha}([0,T])$ satisfy Assumption \ref{assumption_density} with $ \alpha>\frac{1}{2}$, and $f'_- \in BV^{loc}$. Then 
$$
f(X_t)=f(X_0)+\int_0^t f'_-(X_s) \ud X_s,
$$
where $f(x) = \int_0^x f'_-(y)\ud y$.
\end{thm}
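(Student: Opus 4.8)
The plan is to approximate $f'_-$ by the smooth convex-derivative approximations $f_n'$ from Subsection \ref{subsec:bv} (so that $f_n \in C^2$), apply the smooth It\^o formula of Theorem \ref{thmito} to each $f_n$, and then pass to the limit in both sides of $f_n(X_t) = f_n(X_0) + \int_0^t f_n'(X_s)\,\ud X_s$. As usual we may first reduce to the case where the Radon measure $\mu = f''$ (in the distributional sense) has compact support $\mathcal{K}$, by the localisation argument with the sets $\mathcal{K}_n = \{\sup_{[0,T]}|X_t| \le n\}$ used in the proof of Theorem \ref{thm:main}; on $\mathcal{K}_n$ the truncated function agrees with $f$ and the integrals coincide, and $\Omega = \cup_n \mathcal{K}_n$. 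The convergence of the left-hand side is immediate: $f_n \to f$ pointwise, so $f_n(X_t) \to f(X_t)$ and $f_n(X_0) \to f(X_0)$ almost surely.

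The heart of the matter is the convergence of the integrals $\int_0^t f_n'(X_s)\,\ud X_s \to \int_0^t f'_-(X_s)\,\ud X_s$ in the generalized Lebesgue--Stieltjes sense. First I would fix $\beta \in (1-\alpha, \alpha)$ (possible since $\alpha > \frac12$), so that $X \in W_1^{1-\beta}$ by Remark \ref{r:rmk1} and, by Theorem \ref{thm:main}, $f'_-(X_\cdot) \in W_2^{\beta}$ almost surely; the same holds for each $f_n'(X_\cdot)$, and indeed Theorem \ref{t:n-r} gives the bound
$$
\left| \int_0^t \big(f_n'(X_s) - f'_-(X_s)\big)\,\ud X_s \right| \le \sup_{0 \le s < t \le T} \big| D^{1-\beta}_{t-} X_{t-}(s) \big| \cdot \lVert f_n'(X_\cdot) - f'_-(X_\cdot) \rVert_{2,\beta}.
$$
The first factor is an almost surely finite random constant independent of $n$, so it suffices to show $\lVert f_n'(X_\cdot) - f'_-(X_\cdot) \rVert_{2,\beta} \to 0$ almost surely. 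The sup-norm part of this quantity tends to $0$ by pointwise (indeed monotone) convergence of $f_n'$ to $f'_-$ together with the compactness of $\mathcal{K}$ and dominated convergence in the $\ud s/s^\beta$ integral. For the double-integral part I would use the key device from the proof of Theorem \ref{thm:main}: writing $f_n'$ and $f'_-$ via representation (\ref{rep_convex_aux}) with measures $\mu_n := f_n''\,\ud a$ and $\mu$, one bounds the increment $|f_n'(X_t) - f_n'(X_s) - (f'_-(X_t) - f'_-(X_s))|$ by an expression involving $\int_{\mathcal{K}} \mathbf{1}_{X_s < a < X_t}\,|\mu_n - \mu|(\ud a)$, and one needs a dominating integrable (in the $(\P, \ud s\,\ud t)$ sense, or pathwise) majorant so that the almost sure convergence $\mu_n \Rightarrow \mu$ — which is the content of equation (\ref{convex_appro_smooth}) and Lemma \ref{Lemmamain} — can be fed into a dominated-convergence argument for the double integral $\int_0^t \int_0^s \cdots$.

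The main obstacle is precisely the one the paper has flagged as the gap in \cite{AMV}: one cannot dominate $f_n''(a)$ uniformly in $n$, so the naive dominated convergence argument fails. The way around it, which I expect the authors to take, is to avoid ever touching $f_n''$ pointwise and instead keep the measures $\mu_n$ under the $a$-integral, using that $\int_{\mathcal{K}} \big(\int_0^T \int_0^t \frac{\mathbf{1}_{X_s < a < X_t}}{|t-s|^{1+\beta}}\,\ud s\,\ud t\big)\,\mu_n(\ud a)$ is, after taking expectations and applying the Garsia--Rodemich--Rumsey estimate and Lemma \ref{lma:assumption} exactly as in Theorem \ref{thm:main}, bounded by $C \int_{\mathcal{K}} \mu_n(\ud a) = C\,\mu_n(\mathcal{K})$, and $\mu_n(\mathcal{K})$ stays bounded (it converges to $\mu(\mathcal{K})$ on a slightly enlarged compact). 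This gives a uniform-in-$n$ integrable bound on the inner double integrals, legitimising the limit via Lemma \ref{Lemmamain} and a Fatou/dominated-convergence argument along a subsequence, and hence $\lVert f_n'(X_\cdot) - f'_-(X_\cdot) \rVert_{2,\beta} \to 0$ in probability, upgraded to almost sure along a subsequence — which is all that is needed since the left-hand side already converges almost surely. Combining the two limits yields $f(X_t) = f(X_0) + \int_0^t f'_-(X_s)\,\ud X_s$.
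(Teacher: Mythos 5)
Your skeleton (localisation to a compactly supported $\mu$, the smooth It\^o formula for $f_n$, and reduction via Theorem \ref{t:n-r} to showing $\lVert f'_n(X_\cdot)-f'_-(X_\cdot)\rVert_{2,\beta}\to 0$ almost surely) matches the paper, and you correctly identify where the difficulty lies. But your resolution of that difficulty has a genuine gap. You bound the increment of $f'_n-f'_-$ through $|\mu_n-\mu|$ and then argue that, since $\E\int_{\mathcal K}\bigl(\int_0^T\!\int_0^t \mathbf{1}_{X_s<a<X_t}|t-s|^{-1-\beta}\ud s\,\ud t\bigr)\mu_n(\ud a)\le C\,\mu_n(\mathcal K)$ is bounded uniformly in $n$, a ``Fatou/dominated-convergence argument along a subsequence'' yields the convergence of the Besov norm. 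This does not follow: a uniform bound on the integrals $\int g_n\,\ud\eta$, where $g_n(s,t)=|f'_n(X_t)-f'_n(X_s)|\,|t-s|^{-1-\beta}$ and $\eta=\ud s\,\ud t$ on $[0,T]^2$, gives neither uniform $\eta$-integrability of $(g_n)$ nor $L^1(\eta)$-convergence, and Fatou only produces one-sided inequalities; this is exactly the insufficiency the paper flags in \cite{AMV} and \cite{Heikki} (``an upper bound which converges to an integrable limit'' is not enough for dominated convergence). Moreover, the intermediate bound through $|\mu_n-\mu|$ is itself a dead end: $\mu_n\to\mu$ only weakly, and the total variation $|\mu_n-\mu|(\mathcal K)$ need not tend to zero (e.g.\ when $\mu$ has an atom), so no smallness can be extracted from it.

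The paper's actual resolution is different and is the real content of the proof. With $g_n$ as above and $g(s,t)=|f'_-(X_t)-f'_-(X_s)|\,|t-s|^{-1-\beta}$, it shows by Tonelli and the weak convergence (\ref{convex_appro_smooth}), applied to the continuous compactly supported function $a\mapsto\int_0^T\!\int_0^t(\mathbf{1}_{X_s<a<X_t}+\mathbf{1}_{X_t<a<X_s})|t-s|^{-1-\beta}\ud s\,\ud t$, that the integrals converge exactly, $\int g_n\,\ud\eta\to\int g\,\ud\eta$ pathwise, the limit being finite by Theorem \ref{thm:main}. Then the Scheff\'e-type statement, Theorem \ref{thm:simple2} (nonnegativity, pointwise convergence and convergence of the integrals imply $L^1(\eta)$-convergence), gives $\int|g_n-g|\,\ud\eta\to 0$ with no dominating function at all. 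Finally, the identity $\bigl||f'_n(X_t)-f'_n(X_s)|-|f'_-(X_t)-f'_-(X_s)|\bigr|=|f'_n(X_t)-f'_n(X_s)-f'_-(X_t)+f'_-(X_s)|$, valid because $f'_n$ and $f'_-$ are increasing, converts this $L^1$ statement into the convergence of the second part of $\lVert f'_n(X_\cdot)-f'_-(X_\cdot)\rVert_{2,\beta}$; your write-up is also missing this monotonicity step, without which controlling $|g_n-g|$ does not control the increment of the difference. Replacing your uniform-bound/Fatou step by this exact-convergence-plus-Scheff\'e argument, together with the monotonicity identity, is what closes the proof.
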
 
Before the proof we recall the following fundamental result.
\begin{thm}
\label{thm:simple}
Let $X_n,n=1,2,\ldots$ be a positive sequence of integrable random variables and let $X\in L^1(\Omega)$ be a random variable such that $X_n \rightarrow X$ in probability. Then the following are equivalent:
\begin{enumerate}
\item
the sequence $X_n$ is uniformly integrable,
\item
$X_n \rightarrow X$ in $L^1(\Omega)$,
\item
$\E X_n \rightarrow \E X$.
\end{enumerate}
\end{thm}
\begin{rmk}
For the proof the only key feature is the fact that a probability measure $\P$ is finite. Hence it is clear that similar result holds for any finite measure.
\end{rmk}
Consider now a space $S=[0,T]^2$ equipped with a measure $\eta$ given by
$$
\eta(A) = \int_0^T \int_0^T \textbf{1}_A \ud s \ud t.
$$
Clearly, the measure $\eta$ is finite measure. Consequently, it is straightforward to obtain the following version of Theorem \ref{thm:simple}.
\begin{thm}
\label{thm:simple2}
Let $f_n$ be a positive and integrable sequence of functions on $(S,\eta)$ and let $f$ be a positive and integrable function on $(S,\eta)$ such that $f_n \rightarrow f$ for almost every $(s,t)\in S$. Then the following are equivalent;
\begin{enumerate}
\item
the sequence $f_n$ is $\eta$-uniformly integrable,
\item
$f_n \rightarrow f$ in $L^1(\eta)$,
\item
$\int f_n \ud \eta \rightarrow \int f\ud \eta$.
\end{enumerate}
\end{thm}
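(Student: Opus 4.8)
The plan is to deduce this directly from Theorem \ref{thm:simple} by rescaling the finite measure $\eta$ to a probability measure, exactly as anticipated in the remark following that theorem. Since $\eta(S) = \int_0^T\int_0^T \mathbf{1}\,\ud s\,\ud t = T^2 \in (0,\infty)$, I would set $\mathbb{Q} = \eta/T^2$, a genuine probability measure on $S$ having exactly the same null sets as $\eta$. Regarding $f_n$ and $f$ as positive random variables on the probability space $(S,\mathbb{Q})$, they remain integrable there, since $\int |f_n|\,\ud\mathbb{Q} = T^{-2}\int |f_n|\,\ud\eta < \infty$ and likewise for $f$.

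First I would check that the hypothesis of Theorem \ref{thm:simple}, namely convergence in probability, is actually met. By assumption $f_n \to f$ $\eta$-almost everywhere, hence $\mathbb{Q}$-almost everywhere because the two measures share their null sets; and on a finite (here, probability) measure space almost-everywhere convergence implies convergence in measure, i.e. $f_n \to f$ in $\mathbb{Q}$-probability. Thus all the hypotheses of Theorem \ref{thm:simple} hold for the pair $(X_n,X) = (f_n,f)$ on $(S,\mathbb{Q})$.

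Next I would verify that each of the three statements in Theorem \ref{thm:simple2} is equivalent to the corresponding statement in Theorem \ref{thm:simple}, the only difference being the harmless positive constant $T^2$. For uniform integrability, $\sup_n \int_{\{f_n > K\}} f_n\,\ud\eta = T^2 \sup_n \int_{\{f_n > K\}} f_n\,\ud\mathbb{Q}$, so the two notions of uniform integrability coincide. For the $L^1$ statement, $\lVert f_n - f\rVert_{L^1(\eta)} = T^2 \lVert f_n - f\rVert_{L^1(\mathbb{Q})}$, so convergence in one norm is convergence in the other. Finally $\int f_n\,\ud\eta = T^2\,\E_{\mathbb{Q}} f_n$ and $\int f\,\ud\eta = T^2\,\E_{\mathbb{Q}} f$, so convergence of the $\eta$-integrals is equivalent to convergence of the $\mathbb{Q}$-expectations. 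Invoking the equivalence of Theorem \ref{thm:simple} on $(S,\mathbb{Q})$ then yields the equivalence of the three statements of Theorem \ref{thm:simple2}.

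There is essentially no hard step here: the entire content lives in Theorem \ref{thm:simple}, and the present result is a formal transfer along the scaling $\eta = T^2\,\mathbb{Q}$. The only point that genuinely requires a word of justification, and the closest thing to an obstacle, is the passage from the stated almost-everywhere convergence to the in-probability hypothesis of Theorem \ref{thm:simple}; this is routine on a finite measure space but worth recording explicitly, since Theorem \ref{thm:simple2} is phrased with almost-everywhere convergence whereas Theorem \ref{thm:simple} is phrased with convergence in probability.
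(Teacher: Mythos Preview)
Your proposal is correct and follows exactly the route the paper intends: the paper does not give a separate proof but simply remarks, just before stating the theorem, that the only key feature in Theorem \ref{thm:simple} is the finiteness of the measure, so the result carries over to $(S,\eta)$. Your rescaling $\mathbb{Q}=\eta/T^2$ makes this passage explicit, and your observation that the stated almost-everywhere convergence implies convergence in measure on the finite space $(S,\eta)$ is precisely the small bridging step needed to match the hypotheses.
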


\begin{proof}[Proof of Theorem \ref{thmito_convex}]
If $f \in C^2$, then we get 
\begin{equation*}
f(X_t)=f(X_0)+\int_0^t f'(X_s) \ud X_s
\end{equation*}
obviously from It\^o formula.
For the general case, let $f_n$ be the smooth approximation of $f$ introduced in chapter \ref{subsec:bv}. Now $f_n$ converges to $f$ pointwise, and $f'_n \in BV^{loc}$. Since $f_n$ is smooth, for $t \in [0,T]$ we get
\begin{equation*}
f_n(X_t)=f_n(X_0)+\int_0^t f'_n(X_s) \ud X_s.
\end{equation*}
Moreover, we can assume that the support of $f''_n$ equals to the support of $\mu$. Indeed, as in the proof of Theorem \ref{thm:main} we can define auxiliary function $f_{n,k}$ for which $f''_{n,k}$ has compact support and $f_{n,k}$ equals to $f_n$ on the set $\{\omega : \sup_{0\leq t\leq T}|X_t| \in [0,n]\}$.

Now it is trivial that $f_n(X_t) \rightarrow f(X_t)$ and $f_n(X_0) \rightarrow f(X_0)$, and hence we only have to prove the convergence of integrals
\begin{equation*}
\int_0^t f'_n(X_s) \ud X_s \rightarrow \int_0^t f'_-(X_s) \ud X_s.
\end{equation*}
By Theorem \ref{t:n-r} it is sufficient to show
\begin{equation*}
\lVert f'_n(X_t)-f'_-(X_t) \rVert_{2, \beta} \rightarrow 0 \quad n \rightarrow \infty \quad a.s.
\end{equation*}
Note first that $f'_n(X_t)-f'_-(X_t)\rightarrow 0$ pointwise. Consequently for the first term in the norm $\lVert \cdot \rVert_{2, \beta}$ we have
\begin{equation*}
\frac{|f'_n(X_t)-f'_-(X_t)|}{t^\beta} \leq \frac{2\sup_{t \in [0,T]}|f'_n(X_t)|}{t^\beta} \in L^1([0,T], dt),
\end{equation*}
which is integrable upper bound, and by Lebesgue dominated convergence theorem we obtain
\begin{equation*} 
\int_0^T \frac{|f'_n(X_t)-f'_-(X_t)|}{t^\beta}  \ud t \rightarrow 0 \quad a.s.
\end{equation*}
Then consider the second term 
\begin{equation*}
\frac{|f'_n(X_t)-f'_-(X_t)-f'_n(X_s)+f'_-(X_s)|}{|t-s|^{1+\beta}} \leq \frac{|f'_n(X_t)-f'_n(X_s)|}{|t-s|^{1+\beta}}+\frac{|f'_-(X_t)-f'_-(X_s)|}{|t-s|^{1+\beta}}.
\end{equation*}
Denote now
$$
g_n(s,t) = \frac{|f'_n(X_t)-f'_n(X_s)|}{|t-s|^{1+\beta}}
$$
and
$$
g(s,t) = \frac{|f'_-(X_t)-f'_-(X_s)|}{|t-s|^{1+\beta}}.
$$
Now for every fixed $\omega$ (outside a set of measure zero) we have $g_n \rightarrow g$. Moreover, sequence $g_n$ is positive and integrable with respect to $\eta$ by H\"older continuity of $X$. Similarly, function $g$ is positive and integrable according to the proof of Theorem \ref{thm:main}. 

Furthermore, since $f_n$ is also convex we can use representation \ref{rep_convex_aux} to get
\begin{equation*}
 \frac{|f'_n(X_t)-f'_n(X_s)|}{|t-s|^{1+\beta}}=\frac{|\int_{\mathcal{K}}(\mathbf{1}_{X_s < a < X_t}+\mathbf{1}_{X_t<a<X_s})f_n''(a)\ud a |}{|t-s|^{1+\beta}}.
\end{equation*}
Hence, by applying Tonelli's theorem, we have 
\begin{equation*}
\begin{split}
&\int_0^T\int_0^t  \frac{|f'_n(X_t)-f'_n(X_s)|}{|t-s|^{1+\beta}}\ud s\ud t \\
=&\int_0^T\int_0^t  \frac{|\int_{\mathcal{K}}(\mathbf{1}_{X_s < a < X_t}+\mathbf{1}_{X_t<a<X_s})f_n''(a)\ud a|}{|t-s|^{1+\beta}}\ud s\ud t\\
=&\int_{\mathcal{K}}\int_0^T\int_0^t \frac{(\mathbf{1}_{X_s < a < X_t}+\mathbf{1}_{X_t<a<X_s})}{|t-s|^{1+\beta}}\ud s\ud t \, f_n''(a)\ud a\\
&\rightarrow \int_{\mathcal{K}}\int_0^T\int_0^t \frac{(\mathbf{1}_{X_s < a < X_t}+\mathbf{1}_{X_t<a<X_s})}{|t-s|^{1+\beta}}\ud s\ud t \, \mu(\ud a)\\
&=\int g \ud \eta,
\end{split}
\end{equation*}
where the convergence takes place according to (\ref{convex_appro_smooth}). Indeed, a function 
$$
a\rightarrow \int_0^T \int_0^t \frac{(\mathbf{1}_{X_s < a < X_t}+\mathbf{1}_{X_t<a<X_s})}{|t-s|^{1+\beta}}\ud s\ud t
$$
is continuous and positive function with compact support. Hence by approximating with smooth functions we obtain the convergence from (\ref{convex_appro_smooth}).
Consequently, we obtained that
$$
\int g_n(s,t)\ud \eta \rightarrow \int g(s,t)\ud \eta
$$ 
which together with Theorem \ref{thm:simple2} implies
$$
\int |g_n - g|\ud \eta \rightarrow 0.
$$
In other words, we have that
$$
\int_0^T \int_0^t \frac{||f_n'(X_t)-f'_n(X_s)|-|f'_-(X_t)-f'_-(X_s)||}{(t-s)^{\beta+1}}\ud s\ud t \rightarrow 0.
$$
To conclude it remains to note that
$$
||f_n'(X_t)-f'_n(X_s)|-|f'_-(X_t)-f'_-(X_s)|| = |f_n'(X_t)-f'_n(X_s)-f'_-(X_t)+f'_-(X_s)|,
$$
since $f'_n$ and $f'_-$ are increasing functions. This gives 
\begin{equation*}
\lVert f'_n(X_t)-f'_-(X_t) \rVert_{2, \beta} \rightarrow 0 \quad n \rightarrow \infty \quad a.s.
\end{equation*}
which concludes the proof.
\end{proof}
\begin{rmk}
We remark that the above It\^o formula is valid even for bounded random times $\tau\leq T$.
\end{rmk}
\begin{defn}
Let $Y$ be a continuous stochastic process with finite quadratic variation $<Y>$. The local time $L^T_a(Y)$ of $Y$ is the process satisfying 
$$
\int_0^T g(Y_u)\ud <Y>_u = \int_\R g(a)L^T_a(Y)\ud a
$$
for every bounded Borel function $g$.
\end{defn}
For continuous mixed processes of form $Y=X+M$ the above convergence of integrals also implies the existence of local time process $L_a^T(Y)$. This is the topic of the next theorem and the proof follows the same lines as the proof of similar theorem in \cite{V-S}. The details are left to the reader.
\begin{thm}
Let $X\in C^\alpha([0,T])$ for some $\alpha>\frac{1}{2}$ and let $M$ be a martingale with absolutely continuous quadratic variation such that $Y=X+M$ satisfies Assumption \ref{assumption_density}. Moreover, let $f$ be a difference of two convex functions. Then the local time process $L_a^T(Y)$ exists and the It\^o-Tanaka formula
$$
f(Y_T) = f(Y_0) + \int_0^T f'_-(Y_u)\ud Y_u + \int_\R L^T_a(Y) \mu(\ud a)
$$
holds almost surely.
\end{thm}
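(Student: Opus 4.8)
\emph{Overall strategy.} The plan is to run the smooth-approximation scheme of Theorem \ref{thmito_convex}, now with the H\"older process replaced by $Y=X+M$ and with an extra quadratic-variation term that must be identified with a local-time integral. By the Jordan-type decomposition of Section \ref{subsec:bv} it suffices to take $f$ convex with $\mu=f''$; localising on the events $\Omega_n=\{\sup_{t\in[0,T]}|Y_t|\le n\}$ and truncating $f$ as in the proof of Theorem \ref{thm:main}, together with a stopping-time localisation of $M$, we may assume that $\mu$ has compact support $\mathcal K$, that $f$ is bounded, and that $\ud\langle M\rangle_u=\sigma_u^2\,\ud u$ with $\sigma$ bounded. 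Since $\alpha>\tfrac12$ the process $X$ has zero quadratic variation along dyadic partitions, and the cross term obeys $\sum_j|\Delta X_{t_j}\,\Delta M_{t_j}|\le\big(\sum_j|\Delta X_{t_j}|^2\big)^{1/2}\big(\sum_j|\Delta M_{t_j}|^2\big)^{1/2}\to0$; hence $Y$ is a continuous quadratic variation process with $\langle Y\rangle=\langle M\rangle$ absolutely continuous. Finally $Y\in C^{H}$ for every $H<\tfrac12$, so we may fix $H\in(1-\alpha,\tfrac12)$ and $\beta\in(1-\alpha,H)$; then $X\in W_1^{1-\beta}$ by Remark \ref{r:rmk1} and $f'_-(Y)\in BV_iC^{H}\subset W_2^{\beta}$ by Theorem \ref{thm:main}.

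\emph{Smooth approximation and the convergent terms.} Let $f_n\in C^\infty$ be the mollifications of Section \ref{subsec:bv}, so $f_n\to f$ and $f'_n\uparrow f'_-$ pointwise, $f_n''=\mu\ast\phi_n$ with $\phi_n(z)=n\phi(nz)$, and (\ref{convex_appro_smooth}) holds. F\"ollmer's formula (Lemma \ref{ito-follmer}) applied to $f_n$, together with additivity of the F\"ollmer integral and Remark \ref{rmk:holder_folmer}, gives
$$f_n(Y_T)=f_n(Y_0)+\int_0^T f_n'(Y_u)\,\ud X_u+\int_0^T f_n'(Y_u)\,\ud M_u+\tfrac12\int_0^T f_n''(Y_u)\,\ud\langle Y\rangle_u,$$
where the first integral is a gLS integral (Theorem \ref{mainthm}, using $f_n'(Y)\in BV_iC^{H}\subset W_2^{\beta}$) and the third an It\^o integral. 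Now $f_n(Y_0),f_n(Y_T)\to f(Y_0),f(Y_T)$ pointwise; for the gLS integral, Theorem \ref{t:n-r} bounds $\big|\int_0^T(f_n'-f'_-)(Y_u)\,\ud X_u\big|$ by a finite random constant times $\|(f_n'-f'_-)(Y_\cdot)\|_{2,\beta}$, and the latter tends to $0$ a.s.\ exactly as in Theorem \ref{thmito_convex} (apply Theorem \ref{thm:simple2} to $g_n(s,t)=|f_n'(Y_t)-f_n'(Y_s)|\,|t-s|^{-1-\beta}$, whose $\eta$-integral converges by (\ref{convex_appro_smooth}) because $a\mapsto\int_0^T\!\int_0^t(\mathbf 1_{Y_s<a<Y_t}+\mathbf 1_{Y_t<a<Y_s})|t-s|^{-1-\beta}\,\ud s\,\ud t$ is continuous with compact support); for the It\^o integral, It\^o's isometry and dominated convergence ($f_n'(Y_u)\to f'_-(Y_u)$ for $\ud\langle M\rangle\otimes\P$-a.e.\ $(u,\omega)$, valid since $Y_u$ has a density and $\langle M\rangle$ is absolutely continuous, with uniform bound $\sup_{\mathcal K}|f'_-|$) give $L^2$-, hence a.s.-subsequential, convergence. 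Therefore $\int_0^T f_n'(Y_u)\,\ud Y_u\to\int_0^T f'_-(Y_u)\,\ud Y_u$ a.s., and consequently $A_n:=\tfrac12\int_0^T f_n''(Y_u)\,\ud\langle Y\rangle_u$ converges a.s.

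\emph{Occupation density and the limit of $A_n$.} Let $\nu^\omega(\ud a)=\int_0^T\mathbf 1_{Y_u\in\ud a}\,\ud\langle M\rangle_u$ be the (finite, random) occupation measure, so $A_n=\tfrac12\int_\R f_n''(a)\,\nu(\ud a)$. Using Assumption \ref{assumption_density} for $Y$ and boundedness of $\sigma$, $\E[\nu(B)]=\E\int_0^T\mathbf 1_{Y_u\in B}\sigma_u^2\,\ud u\le\|\sigma\|_\infty^2\,\|g\|_{L^1([0,T])}\,|B|$, whence $\nu\ll\mathrm{Leb}$ a.s.; let $a\mapsto L_a^T(Y)$ be a jointly measurable version of $\ud\nu/\ud a$, which by construction satisfies the occupation formula of the preceding definition. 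Writing $f_n''=\mu\ast\phi_n$ and applying Fubini gives $A_n=\tfrac12\int_\R\psi_n(b)\,\mu(\ud b)$ with $\psi_n(b)=\int\phi(z)\,L^T_{b+z/n}(Y)\,\ud z$; since $\phi\ge0$ has total mass $1$ and compact support inside $(-\infty,0]$, $\psi_n(b)$ is a left-sided average of $L^T_\cdot(Y)$ contracting to $b$. Provided $a\mapsto L^T_a(Y)$ admits a bounded version with left limits, $\psi_n(b)\to L^T_{b-}(Y)$ for every $b$, and dominated convergence ($\mu$ finite, $\psi_n\le\|L^T_\cdot(Y)\|_\infty$) gives convergence of $A_n$ to the local-time integral; combined with the previous step, and undoing the localisations via $\Omega=\bigcup_n\Omega_n$, this yields the asserted It\^o--Tanaka formula.

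\emph{Main obstacle.} Everything through the a.s.\ convergence of $A_n$ is routine and parallels Theorem \ref{thmito_convex} almost verbatim. The real difficulty is the last step: because $\mu$ may carry a singular part, the weak convergence $f_n''\,\ud a\to\mu$ does not by itself license evaluating the limit against the merely $L^1$ density $a\mapsto L^T_a(Y)$, so one genuinely needs a sufficiently regular -- bounded, one-sided continuous -- version of the occupation density of $Y$ with respect to $\ud\langle M\rangle$. Beyond the absolute-continuity bound above, this requires Garsia--Rademich--Rumsey-type estimates (in the spirit of Lemma \ref{lma:GRR}) for $(a,\omega)\mapsto L^T_a(Y)$, namely moment bounds of the form $\E\big|L^T_b(Y)-L^T_a(Y)\big|^p\le C|b-a|^{1+\epsilon}$ obtained from Assumption \ref{assumption_density} and the H\"older regularity of $Y$ (when $\mu$ has no singular continuous part a direct Lebesgue-point argument suffices); this is carried out exactly as for Gaussian $Y$ in \cite{V-S}, and is where essentially all the remaining work lies. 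As in the excerpt, this also requires care in exchanging the a.s.\ limit with these measure-theoretic manipulations, which is precisely the kind of step the paper warns about.
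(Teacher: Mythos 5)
Your overall strategy---mollify $f$, apply F\"ollmer's formula (Lemma \ref{ito-follmer}) to $f_n$, use the machinery of Theorems \ref{thmito_convex} and \ref{mixed} to get a.s.\ convergence of the gLS part $\int_0^T f_n'(Y_u)\,\ud X_u$, It\^o isometry for $\int_0^T f_n'(Y_u)\,\ud M_u$, and thereby force convergence of the quadratic-variation term $\tfrac12\int_0^T f_n''(Y_u)\,\ud\langle Y\rangle_u$---is exactly the route the paper intends; the paper itself gives no details here, stating only that the proof follows the lines of the analogous theorem in \cite{V-S}, so up to that point your write-up is more explicit than the paper's. The localisations, the observation $\langle Y\rangle=\langle M\rangle$, and the treatment of the two stochastic integrals are fine.

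The gap is in the final step, which is where the actual content of the theorem (existence of $L^T_a(Y)$ and identification of the limit) lives. First, the deduction ``$\E[\nu(B)]\le C|B|$ for all Borel $B$, whence $\nu\ll\mathrm{Leb}$ a.s.'' is not valid: a bound on the mean measure says nothing about pathwise absolute continuity (take $\nu=\delta_U$ with $U$ uniform on $[0,1]$: then $\E[\nu(B)]\le|B|$, yet $\nu$ is purely atomic for every $\omega$). So you have not constructed the pathwise occupation density, i.e.\ precisely the assertion ``the local time process exists''. Second, even granting a density, passing from $\tfrac12\int f_n''(a)\,\nu(\ud a)$ to $\int_\R L^T_a(Y)\,\mu(\ud a)$ for a possibly singular $\mu$ needs, as you yourself note, a bounded one-sided-continuous version of $a\mapsto L^T_a(Y)$; you defer this to GRR-type moment estimates ``as in \cite{V-S}'', but those are proved there for Gaussian processes and nothing in your proposal adapts them to the mixed setting---Assumption \ref{assumption_density} only bounds one-dimensional densities and does not by itself give an increment bound of the form $\E|L^T_b(Y)-L^T_a(Y)|^p\le C|b-a|^{1+\epsilon}$. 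A route more in the spirit of what you have already established (and of \cite{V-S}) is to \emph{define} $L^T_a(Y)$ for each $a$ as the a.s.\ limit of $\tfrac12\int_0^T f_n''(Y_u)\,\ud\langle Y\rangle_u$ for the Tanaka function $f(x)=(x-a)^+$---this limit exists because all other terms of F\"ollmer's formula converge---and then verify the occupation-density property by a monotone-class argument, which avoids producing a regular pathwise density beforehand. (A minor further slip: since $\phi$ is supported in $(-\infty,0]$, the mollification $f_n''=\mu\ast\phi_n$ averages $L^T_\cdot(Y)$ over $a\ge z$, a right-sided average, not the left-sided one you wrote, so the one-sided limit conventions must be matched with the use of $f'_-$.)
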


We also have It\^o formula for multidimensional processes. 
\begin{thm}
Let $X\in C^{\bar{\alpha}}([0, T])$ such that for every $k=1,\ldots,n$ the process $X_k$ satisfies Assumption \ref{assumption_density} and $\min_k \alpha_k >\frac{1}{2}$. Furthermore, assume that $\partial_k^- f(x_1,\ldots,x_n) \in BV^{loc}$ with respect to every $x_k$. Then
\begin{equation*}
 f(X_T^1, \ldots, X_T^n)=f(X_0^1, \ldots, X_0^n)+ \sum_{k=1}^n \int_0^T \partial_k^- f(X_t^1, \ldots, X_t^n) \ud X_t^k.
\end{equation*}
 \end{thm}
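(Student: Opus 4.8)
The plan is to follow the strategy of the proof of Theorem~\ref{thmito_convex}: establish the formula first for smooth $f$, then pass to the limit along a mollified approximation, the only delicate point being the convergence of each integral term, which I will extract from the almost sure estimate already produced inside the proof of Theorem~\ref{multithm}. First I would note that the right-hand side makes sense: since $\min_k\alpha_k>\tfrac12$ we have $\alpha_j+\alpha_k>1$ for every pair $j,k$, so each $\int_0^T\partial_k^- f(X_t^1,\ldots,X_t^n)\,\ud X_t^k$ exists almost surely in the gLS sense by Theorem~\ref{multithm} (with $Z=X^k$). By the reductions of Section~\ref{subsec:bv} together with the truncation device used in the proof of Theorem~\ref{multithm} — freezing $f$ outside $\{\sup_t\max_k|X_t^k|\le n\}$ and covering $\Omega$ by these events — I may assume that $f$ is bounded and that, in each variable $x_k$ separately, $\partial_k^- f$ is the left derivative of a convex function whose associated (parameter-dependent) Radon measure has compact support.

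Next, let $f_m\in C^\infty(\R^n)$ be the tensor-product mollification of $f$ built from the one-sided kernels $\phi$ of Section~\ref{subsec:bv}, so that $f_m\to f$ and $\partial_k f_m\to\partial_k^- f$ pointwise, and, with the other coordinates frozen, $\partial_k f_m$ admits the representation \eqref{rep_convex_aux} in $x_k$ with second-derivative densities converging as in \eqref{convex_appro_smooth}. For the smooth function $f_m$ the multidimensional It\^o formula
\[
f_m(X_T^1,\ldots,X_T^n)=f_m(X_0^1,\ldots,X_0^n)+\sum_{k=1}^n\int_0^T\partial_k f_m(X_t^1,\ldots,X_t^n)\,\ud X_t^k
\]
holds: a Taylor expansion along a refining sequence of partitions produces first-order sums converging to the gLS integrals (which coincide with Young integrals here by Remark~\ref{rmk:coincide}) and second-order terms controlled by the (co)variations $\ud[X^j,X^k]$, all of which vanish, since each $X^k\in C^{\alpha_k}$ with $\alpha_k>\tfrac12$ has zero quadratic variation and $\sum_i|\Delta X^j_{t_i}\,\Delta X^k_{t_i}|\le\bigl(\sum_i(\Delta X^j_{t_i})^2\bigr)^{1/2}\bigl(\sum_i(\Delta X^k_{t_i})^2\bigr)^{1/2}\to0$. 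This is the $n$-dimensional analogue of Theorem~\ref{thmito}.

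It then remains, for a fixed $\beta\in(1-\min_k\alpha_k,\tfrac12)$ and each $k$, to prove
\[
\bigl\lVert\partial_k f_m(X_\cdot^1,\ldots,X_\cdot^n)-\partial_k^- f(X_\cdot^1,\ldots,X_\cdot^n)\bigr\rVert_{2,\beta}\longrightarrow0\quad\text{a.s.,}
\]
since Theorem~\ref{t:n-r} then forces $\int_0^T\partial_k f_m\,\ud X_t^k\to\int_0^T\partial_k^- f\,\ud X_t^k$, while the boundary terms converge trivially. The $s^{-\beta}$-part of the norm converges by dominated convergence on $([0,T],\ud t)$, exactly as in the proof of Theorem~\ref{thmito_convex}. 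For the double-integral part I would fix $\omega$, split $\partial_k f_m(X_t)-\partial_k f_m(X_s)$ coordinate by coordinate precisely as in Steps~1--3 of the proof of Theorem~\ref{multithm}, so that each summand has the form $\int\mathbf{1}_{\{X_s^j<a<X_t^j\}}\nu(\ud a)/|t-s|^{1+\beta}$ with $\nu$ a random measure depending on the remaining coordinates; then the uniform-in-$m$ bound \eqref{eq:induction_step}, Tonelli's theorem, and the convergence $\int g_m\,\ud\eta\to\int g\,\ud\eta$ on the finite measure space $(S,\eta)$ — the latter coming from \eqref{convex_appro_smooth} applied, after conditioning on the remaining coordinates, to the continuous compactly supported function $a\mapsto\int_0^T\!\int_0^t\frac{\mathbf{1}_{X_s^k<a<X_t^k}+\mathbf{1}_{X_t^k<a<X_s^k}}{|t-s|^{1+\beta}}\,\ud s\,\ud t$ — place us in the setting of Theorem~\ref{thm:simple2}, which yields $\int|g_m-g|\,\ud\eta\to0$. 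Since $\partial_k f_m$ and $\partial_k^- f$ are monotone in $x_k$, the absolute values collapse, giving the desired convergence of the $(2,\beta)$-norm and hence the theorem.

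The step I expect to be the main obstacle is this last one: arranging the coordinatewise splitting so that the dominating estimate \eqref{eq:induction_step} applies uniformly in $m$ with $\partial_k f_m$ in place of $\partial_k^- f$, and verifying that the corresponding $\eta$-integrals converge, i.e.\ reproducing, with the mollified functions, the continuity and compact-support argument that legitimises the use of \eqref{convex_appro_smooth} followed by the uniform-integrability criterion of Theorem~\ref{thm:simple2}. Everything else is bookkeeping assembled from results already in place.
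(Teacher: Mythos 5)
Your proposal is correct and follows essentially the same route as the paper, whose own proof is only the remark that the result "follows similar arguments" to Theorem~\ref{thmito_convex} combined with Theorem~\ref{multithm} — i.e.\ smooth approximation, the $n$-dimensional zero-quadratic-variation It\^o formula, and convergence of the integrals in the $\lVert\cdot\rVert_{2,\beta}$-norm via the coordinatewise estimates of Theorem~\ref{multithm} and the uniform-integrability criterion of Theorem~\ref{thm:simple2}. Your write-up is in fact more detailed than the paper's one-line proof.
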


\begin{proof}
The proof follows similar arguments with the proof of Theorem \ref{thmito_convex} and Theorem \ref{multithm}.

\end{proof}
%%%%%%%%%%%%%%%%%%%%%%%%%%%%%%%%%%%%%%%%%%%%%%%%%%%%%%%%%
\section{Existence of Riemann-Stieltjes integrals}
\label{sec:rs}

In this section we prove our second main theorem which states that stochastic integral can also be understood as a limit of Riemann-Stieltjes sums. In \cite{AMV} it was proved that in the particular case of fractional Brownian motion one can approximate such integrals with forward sums along uniform partitions. However, in this case also the use of dominated convergence theorem is a bit floppy. Here we give precise details. Moreover, we prove that the integral exists along any partition and any Riemann-Stieltjes sums.

\begin{thm}\label{R-Sthm}
Let $Z\in BV_iC^{\alpha}$ and $Y\in C^{\gamma}$ with $\alpha+\gamma>1$. Then for any partition $\pi_n=\{0=t_0^n < \ldots < t_{k(n)}^n=T\}$ with $|\pi_n|\rightarrow 0$, we have
$$
\sum_{i=1}^{k(n)}Z_{t_{i}}(Y_{t_{i}^n}-Y_{t_{i-1}^n}) \rightarrow \int_0^T Z_s\ud Y_s \quad a.s.
$$
where $t_i \in [t_{i-1}^n, t_i^n]$.
\end{thm}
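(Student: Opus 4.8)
The plan is to realise each Riemann--Stieltjes sum as a generalized Lebesgue--Stieltjes integral of a step function and to bound the error via Theorem \ref{t:n-r}. As in the proof of Theorem \ref{thm:main}, the exhaustion $\Omega=\bigcup_n\{\sup_{[0,T]}|X|\le n\}$ lets us assume without loss of generality that the measure $\mu$ associated with $f$ has compact support $\mathcal K$ and that $f$ is bounded. Fix $\beta\in(1-\gamma,\alpha)$, so that $Y\in W_1^{1-\beta}$ by Remark \ref{r:rmk1} and $Z=f(X)\in W_2^{\beta}$ almost surely by Theorem \ref{thm:main}. Let $Z^n$ be the step process with $Z^n_r=Z_{t_i}=f(X_{t_i})$ for $r\in(t_{i-1}^n,t_i^n]$. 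A bounded step function belongs to $W_2^{\beta}$, and by a direct computation from the gLS formula (or by additivity over the partition intervals together with the elementary identity $\int_a^b c\,\ud Y=c(Y_b-Y_a)$ of Remark \ref{rmk:coincide}) one has $\int_0^T Z^n_s\,\ud Y_s=\sum_{i=1}^{k(n)}Z_{t_i}(Y_{t_i^n}-Y_{t_{i-1}^n})$. Hence, by linearity and Theorem \ref{t:n-r},
\begin{equation*}
\Big|\sum_{i=1}^{k(n)}Z_{t_i}(Y_{t_i^n}-Y_{t_{i-1}^n})-\int_0^T Z_s\,\ud Y_s\Big|=\Big|\int_0^T (Z^n_s-Z_s)\,\ud Y_s\Big|\le\Lambda_Y(\omega)\,\lVert Z^n-Z\rVert_{2,\beta},
\end{equation*}
where $\Lambda_Y(\omega)=\sup_{0\le s<t\le T}\big|D^{1-\beta}_{t-}Y_{t-}(s)\big|<\infty$ a.s.\ since $Y$ is $\gamma$-H\"older with $\gamma>1-\beta$. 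So it suffices to prove $\lVert Z^n-Z\rVert_{2,\beta}\to0$ almost surely.

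The first term of the norm is easy: $\int_0^T|Z^n_s-Z_s|\,s^{-\beta}\,\ud s\to0$ a.s.\ by dominated convergence. The set of $s$ for which $X_s$ lies in the countable discontinuity set $\mathrm{Disc}(f)$ is Lebesgue-null, since $\E\int_0^T\mathbf{1}_{\{X_s\in\mathrm{Disc}(f)\}}\,\ud s=\int_0^T\P(X_s\in\mathrm{Disc}(f))\,\ud s=0$ by Assumption \ref{assumption_density}; for every other $s$, continuity of $X$ forces $Z^n_s=f(X_{t_i})\to f(X_s)=Z_s$ as $|\pi_n|\to0$, and the common dominant is $2\lVert Z\rVert_\infty s^{-\beta}\in L^1([0,T])$.

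The substance is the second term, for which we must show
\begin{equation*}
A_n:=\int_0^T\!\!\int_0^s\frac{\big|(Z^n_s-Z_s)-(Z^n_u-Z_u)\big|}{(s-u)^{1+\beta}}\,\ud u\,\ud s\longrightarrow0\quad\text{a.s.}
\end{equation*}
The integrand tends to $0$ for a.e.\ $(s,u)$ by the pointwise convergence above and is dominated by $g_n+g$, where $g_n(s,u)=\tfrac{|Z^n_s-Z^n_u|}{(s-u)^{1+\beta}}$ and $g(s,u)=\tfrac{|Z_s-Z_u|}{(s-u)^{1+\beta}}$. By a generalized dominated convergence argument — equivalently, by applying Theorem \ref{thm:simple2} to $g_n+g$ on the finite measure space $([0,T]^2,\eta)$ — it is enough to verify $g_n\to g$ a.e.\ (immediate) and the convergence of the $W_2^{\beta}$-seminorms $\int_0^T\!\int_0^s g_n\to\int_0^T\!\int_0^s g$. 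For this I would use Theorem \ref{measurethm} to write $|Z_s-Z_u|=\int_{\mathcal K}\big(\mathbf{1}_{X_u<a<X_s}+\mathbf{1}_{X_s<a<X_u}\big)\mu(\ud a)$ and its discretized analogue for $Z^n$ (with $X_{t_i},X_{t_j}$ in place of $X_s,X_u$, where $s\in(t_{i-1}^n,t_i^n]$, $u\in(t_{j-1}^n,t_j^n]$), then exchange integrals by Tonelli to reduce the seminorms to $\int_{\mathcal K}\Psi_n(a)\,\mu(\ud a)$ and $\int_{\mathcal K}\Psi(a)\,\mu(\ud a)$ for the associated iterated indicator integrals. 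One then shows $\Psi_n(a)\to\Psi(a)$ for $\mu$-a.e.\ $a$, again by generalized dominated convergence on $[0,T]^2$: the discretized indicators converge pointwise (continuity of $X$; for $\mu$-a.e.\ $a$ the set $\{(s,u):X_s=a\ \text{or}\ X_u=a\}$ is Lebesgue-null, by a Fubini argument using Assumption \ref{assumption_density}), and they admit a uniform-in-$n$ integrable bound $\Psi_n(a)\le C(\omega)\big(\int_0^T|X_t-a|^{-\theta}\,\ud t+\int_0^T|a-X_t|^{-\theta}\,\ud t\big)$ for a suitable exponent $\theta\in(0,1)$, obtained by adapting the hitting-time/Garsia--Rademich--Rumsey argument of Theorem \ref{thm:main} (Lemma \ref{lma:GRR}). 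Passing to the limit $n\to\infty$ under the $\mu$-integral — with the same bound as dominant, which is $\mu$-integrable in expectation by Lemma \ref{lma:assumption} — yields the convergence of the seminorms, hence $A_n\to0$ and $\lVert Z^n-Z\rVert_{2,\beta}\to0$ a.s.; undoing the reduction of the first paragraph through $\Omega=\bigcup_n\mathcal K_n$ then finishes the proof.

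I expect the genuine difficulty to be the uniform-in-$n$ estimate in the last step. The clean bound of Theorem \ref{thm:main} rests on the hitting time $T_t(a)=\sup\{u\le t:X_u=a\}$, whereas the sampled values $(X_{t_i^n})_i$ need not attain the level $a$; one must instead use the last partition point preceding the relevant sign change of $X-a$, check that both the random constant $C(\omega)$ and the exceptional set of levels $a$ can be chosen independently of $n$, and verify that the near-diagonal contributions — those with $s,u$ in adjacent partition intervals, where $(s-u)^{-1-\beta}$ is unbounded — remain controlled, which again relies on the H\"older regularity of $X$ through Lemma \ref{lma:GRR}. Apart from this, the argument is a careful but routine chain of (generalized) dominated convergences — precisely the step handled loosely in the literature and which the paper sets out to make rigorous.
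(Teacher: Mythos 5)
Your reduction and overall frame match the paper's: write the Riemann--Stieltjes sum as the gLS integral of the step process $Z^n$, bound the error by Theorem \ref{t:n-r}, and reduce everything to $\lVert Z^n-Z\rVert_{2,\beta}\to 0$; your treatment of the first norm term (including the care about the discontinuity set of $f$, which the paper glosses over) is fine. The problem is the second norm term, where you leave a genuine gap. You route the argument through Theorem \ref{thm:simple2}, which forces you to prove convergence of the discretized seminorms $\int g_n\,\ud\eta\to\int g\,\ud\eta$, and you reduce this via Tonelli to ``$\Psi_n(a)\to\Psi(a)$ for $\mu$-a.e.\ $a$ plus a uniform-in-$n$, $\mu$-integrable bound on $\Psi_n(a)$''. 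Neither piece is established. First, the inference ``discretized indicators converge pointwise and $\Psi_n(a)\le C(\omega)\int_0^T|X_t-a|^{-\theta}\ud t$ uniformly in $n$, hence $\Psi_n(a)\to\Psi(a)$'' is not valid: a bound on the integrals is not a pointwise dominant on $[0,T]^2$, and Fatou alone only gives $\liminf_n\Psi_n(a)\ge\Psi(a)$; you would need uniform integrability (in $(s,u)$, uniformly in $n$) of $\mathbf{1}_{X_{t_j}<a<X_{t_i}}(s-u)^{-1-\beta}$, which is again an estimate, not a triviality. Second, the claimed uniform bound on $\Psi_n(a)$ is itself the crux and you explicitly defer it: the argument of Theorem \ref{thm:main} uses the hitting time $T_t(a)$ of the \emph{continuous} path, whereas the sampled sequence $(X_{t_i^n})$ need not attain the level $a$, and the adjacent-interval contributions, where $\mathbf{1}_{X_{t_j}<a<X_{t_i}}$ can equal $1$ while $(s-u)^{-1-\beta}$ is unbounded, amount to controlling the number of level crossings of the sampled path uniformly in $n$. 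Since in your route this unproved estimate carries essentially all the technical content of the theorem, the proposal as written does not prove the statement.

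It is worth seeing how the paper's proof avoids needing any such new uniform estimate. It first treats $f(x)=(x-a)^+$, so $f'_-=\mathbf{1}_{\cdot>a}$, splits $|h_n(t)-h_n(s)|$ into the same-interval part $A_1$ (dominated by $|f'_-(X_t)-f'_-(X_s)|$, i.e.\ by the true indicators, already integrable against $(t-s)^{-1-\beta}$ by the proof of Theorem \ref{thm:main}) and the cross-interval part $A_2$, and expands $A_2$ into twelve indicator terms. Eight of these have $a$ lying between $X_s$ and $X_t$ and are therefore dominated by $\mathbf{1}_{X_s<a<X_t}+\mathbf{1}_{X_t<a<X_s}$ --- an $n$-free integrable dominant coming for free from Theorem \ref{thm:main}; the remaining four place $a$ between a sampled value and the value of $X$ in the same partition interval, so they vanish for large $n$ at every fixed $(s,t,a)$ with $X_t\neq a$, $X_s\neq a$, because $|X_{t_i}-X_t|\le C(\omega)|\pi_n|^{\alpha}$. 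Dominated convergence then applies, and the general $f$ is recovered via $|h_n(t)|\le\int|h_n^a(t)|\,\mu(\ud a)$ and one more dominated convergence in $a$. So to complete your argument you must either actually prove the partition-adapted crossing/GRR estimate for $\Psi_n(a)$ (with constant and exceptional null set of levels independent of $n$, using Lemma \ref{lma:GRR} and Lemma \ref{lma:assumption}), or switch to the paper's decomposition, which replaces that estimate by domination with quantities already controlled in Theorem \ref{thm:main}.
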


\begin{proof}
Without loss of generality we can assume that $Z=f'_-(X)$, where $X\in C^{\alpha}$ and $f'_-$ is the left-derivative of some convex function. Moreover, as in the proof of Theorem \ref{thm:main} we can assume without loss of generality that the measure $\mu$ has compact support. 

First we obtain
\begin{equation*}
\sum_{i=1}^{k(n)}f'_-(X_{t_{i}})(Y_{t_{i}^n}-Y_{t_{i-1}^n})-\int_0^T f'_-(X_s) \ud Y_s=\int_0^T h_n(t) \ud Y_t,
\end{equation*}
where
\begin{equation}
\label{h_n:def}
h_n(t)=\sum_{i=1}^{k(n)}f'_-(X_{t_{i}})\mathbf{1}_{(t_{i-1}^n,t_{i}^n]}(t)- f'_-(X_t).
\end{equation}
Observe that $h_n(t) \rightarrow 0$ pointwise. Hence, by Theorem \ref{t:n-r}, we need to prove that
\begin{equation*}
\lVert h_n(t)\rVert_{2, \beta} \rightarrow 0, \quad a.s.
\end{equation*}
For first term in the norm we have
\begin{equation*}
\frac{|h_n(t)|}{t^\beta} \leq  \frac{2\sup_{t \in [0, T]}|f'_-(X_t)|}{t^\beta},
\end{equation*}
which is integrable, and hence by Lebesgue dominated convergence theorem, we obtain
\begin{equation*} 
\int_0^T \frac{|h_n(t)|}{t^\beta} \ud t \rightarrow 0 \quad a.s.
\end{equation*}
For the second term, we have
\begin{equation*}
\begin{split}
&|h_n(t)-h_n(s)|\\
=&|\sum_{i=1}^{k(n)}f'_-(X_{t_{i}})\mathbf{1}_{(t_{i-1}^n, t_i^n]}(t)-f'_-(X_t)-\sum_{j=1}^{k(n)} f'_-(X_{t_{j}})\mathbf{1}_{(t_{j-1}^n, t_j^n]}(s)+f'_-(X_s)|\\
=&\sum_{i=1}^{k(n)}|f'_-(X_t)-f'_-(X_s)|\mathbf{1}_{(t_{i-1}^n, t_i^n]\times (t_{i-1}^n, t_i^n]}(s,t) \\
&+ \sum_{1 \leq i \leq k(n), j<i }|f'_-(X_{t_{i}})-f'_-(X_{t})-f'_-(X_{t_{j}})+f'_-(X_{s})|\mathbf{1}_{(t_{j-1}^n, t_j^n]\times (t_{i-1}^n, t_i^n]}(s,t)\\
&=: A_1 + A_2.
\end{split}
\end{equation*}
For the term $A_1$, we have
$$
A_1 \leq |f'_-(X_t)-f'_-(X_s)|
$$
which brings an integrable upper bound. Consider now the term $A_2$ where $s$ and $t$ are on different intervals, i.e. $j<i$. Consider first a convex function $f(x) = (x-a)^+$. In this case $f'_-(x) = \textbf{1}_{x>a}$ and we have, almost surely, that
\begin{equation*}
\begin{split}
&|\textbf{1}_{X_{t_i}>a} - \textbf{1}_{X_{t}>a}-\textbf{1}_{X_{t_j}>a}+\textbf{1}_{X_{t_j}>a}|\\
&= \textbf{1}_{X_{t_i}>a>X_t,X_s,X_{t_j}}+\textbf{1}_{X_{t_j}>a>X_t,X_s,X_{t_i}}+\textbf{1}_{X_{t_i}<a<X_t,X_s,X_{t_j}}+\textbf{1}_{X_{t_j}<a<X_t,X_s,X_{t_i}}\\
&+\textbf{1}_{X_{t}>a>X_{t_i},X_s,X_{t_j}}+\textbf{1}_{X_{s}>a>X_t,X_{t_i},X_{t_j}}+\textbf{1}_{X_{t}<a<X_{t_i},X_s,X_{t_j}}+\textbf{1}_{X_{s}<a<X_t,X_{t_i},X_{t_j}}\\
&+2\textbf{1}_{X_{t_i},X_s>a>X_t,X_{t_j}}+2\textbf{1}_{X_{t_i},X_s<a<X_t,X_{t_j}}
\end{split}
\end{equation*}
Now all of the above 12 terms converges to zero almost surely. Let now $\omega$ be fixed. Now on each subinterval we have $\max(|t-t_i|,|s-t_j|)\rightarrow 0$ as $n\rightarrow \infty$. Hence, by H\"older continuity of $X$, there exists $N(\omega)$ such that the first four terms equal to zero for every $n\geq N(\omega)$. Moreover, for last eight terms we have integrable upper bound $\textbf{1}_{X_t<a<X_s} + \textbf{1}_{X_s<a<X_t}$. Hence we obtain that for every $n\geq N(\omega)$ there exists an integrable upper bound, and consequently we get
\begin{equation*}
\int_0^T\int_0^t \frac{|h_n(t)-h_n(s)|}{(t-s)^{\beta +1}}\ud s\ud t \rightarrow 0 \quad a.s. ~\text{as} ~ n \rightarrow \infty
\end{equation*}
by dominated convergence theorem. To conclude the proof, denote by $h_n^a(t)$ the sum given by (\ref{h_n:def}) in the particular case of $f(x)=(x-a)^+$, and denote by $h_n(t)$ the sum given by (\ref{h_n:def}) for general $f$ for which the measure $\mu$ has compact support. By applying representation (\ref{rep_convex_aux}) we obtain
$$
|h_n(t)| \leq \int |h_n^a(t)|\mu(\ud a)
$$
and the result follows by applying dominated convergence theorem (see also \cite{A-V} for details).
\end{proof}

\begin{rmk}
Let $X\in C^{\alpha}$ and $Y\in C^{\gamma}$ with $\alpha+\gamma >1$. In this case, according to celebrated Young results, the integral
 $\int_0^T X_u \ud Y_u$ exists as a limit of Riemann-Stieltjes sums. Under our additional Assumption \ref{assumption_density}, this result is merely a special case corresponding to the function $g(x)=x$ in representation (\ref{rep_BViC}). 
\end{rmk}

Not surprisingly, similar result also holds for multidimensional case. The proof is straightforward and the details are left to the reader.
\begin{thm}
Let $X\in C^{\bar{\alpha}}([0, T])$ such that every $X_k$ satisfies Assumption \ref{assumption_density} and $Y\in C^{\gamma}([0, T])$ with $\alpha_k+\gamma >1$ for $k=1, \ldots, n$. Moreover, assume that $f(x_1, \ldots, x_n) \in BV^{loc}$ with respect to every $x_k$. Then for any partition $\pi_n=\{0=t_0^n < \ldots < t_{k(n)}^n=T\}$ with $|\pi_n|\rightarrow 0$, we have
\begin{equation*}
\sum_{i=1}^{k(n)} f (X^1_{t_i}, \ldots, X^n_{t_i})(Y_{t_i^n}-Y_{t_{i-1}^n}) \xrightarrow{a.s.} \int_0^T f(X^1_t, \ldots, X^n_t)\ud Y_t,
\end{equation*}
where $t_i \in [t_{i-1}^n, t_i^n]$.
\end{thm}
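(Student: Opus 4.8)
The plan is to mimic the proof of Theorem~\ref{R-Sthm}, substituting its one-dimensional ingredients by their multidimensional analogues from the proof of Theorem~\ref{multithm}. Write $Z_t = f(X^1_t,\dots,X^n_t)$. As in both of those proofs I would first localise: set $\Omega_n = \{\omega:\ \max_k\sup_{t\in[0,T]}|X^k_t|\le n\}$, replace $f$ by a truncation that agrees with $f$ on the cube $[-n,n]^n$ and is constant outside it, and use $\Omega=\cup_n\Omega_n$ together with the fact that, on $\Omega_n$, neither the Riemann--Stieltjes sums nor the integral change once the truncation level exceeds $n$. Hence without loss of generality $f$ is bounded and, when $f$ is regarded as a function of the single variable $x_k$ with the other coordinates frozen at their current $\omega$-values, each associated measure in representation~(\ref{rep_convex_aux}) is supported in a fixed compact set. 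Choosing $\beta\in\left(1-\gamma,\min_k\alpha_k\right)$ we have $Y\in W_1^{1-\beta}$, the difference between the Riemann--Stieltjes sum and $\int_0^T Z_s\,\ud Y_s$ equals $\int_0^T h_n(t)\,\ud Y_t$ with $h_n(t)=\sum_i Z_{t_i}\mathbf{1}_{(t^n_{i-1},t^n_i]}(t)-Z_t$, and by Theorem~\ref{t:n-r} it suffices to show $\lVert h_n\rVert_{2,\beta}\to 0$ almost surely.

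The first term of the norm is treated exactly as in Theorem~\ref{R-Sthm}: $|h_n(t)|\le 2\sup_t|Z_t|<\infty$, so $t^{-\beta}|h_n(t)|$ is dominated by the integrable function $2t^{-\beta}\sup_t|Z_t|$, and since $h_n(t)\to 0$ pointwise, dominated convergence gives $\int_0^T t^{-\beta}|h_n(t)|\,\ud t\to 0$ a.s. For the Besov double integral I would split $|h_n(t)-h_n(s)|$, as in Theorem~\ref{R-Sthm}, into a diagonal part $A_1$ supported on pairs $(s,t)$ lying in the same subinterval of $\pi_n$, and an off-diagonal part $A_2$ with $s\in(t^n_{j-1},t^n_j]$ and $t\in(t^n_{i-1},t^n_i]$ for $j<i$. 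The diagonal part satisfies $A_1\le|Z_t-Z_s|$, so $(t-s)^{-1-\beta}A_1$ is dominated by $(t-s)^{-1-\beta}|Z_t-Z_s|$, which is $\eta$-integrable by the proof of Theorem~\ref{multithm} (this is precisely $Z\in W_2^\beta$); together with $A_1\to 0$ pointwise, dominated convergence handles this part.

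The off-diagonal part $A_2$ is the heart of the argument, and here I would combine the coordinate-telescoping of Theorem~\ref{multithm} with the indicator computation of Theorem~\ref{R-Sthm}. For each of the four vector-valued points $X_{t_i},X_t,X_{t_j},X_s$ I would expand the alternating increment of $f$ into a sum over $k=1,\dots,n$ of increments in the single coordinate $x_k$, the others frozen; applying representation~(\ref{rep_convex_aux}) in that coordinate, every one-coordinate increment becomes $\int(\mathbf{1}_{\cdot<a<\cdot}+\mathbf{1}_{\cdot<a<\cdot})\,\mu(\ud a)$ with $\mu$ supported in the fixed compact set and depending (through $\omega$) only on the frozen coordinates. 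This reduces $A_2$, for each fixed $a$, to finitely many indicator terms of exactly the type appearing in the proof of Theorem~\ref{R-Sthm} for the scalar convex function $x\mapsto(x-a)^+$: for almost every $\omega$ the distances between points in the same subinterval are at most $C(\omega)|\pi_n|^{\min_k\alpha_k}\to 0$, so the spurious patterns (those separating two same-subinterval points by the level $a$) vanish for all $n\ge N(\omega,a)$, while the surviving patterns are dominated, uniformly in $n$, by $\mathbf{1}_{X^k_s<a<X^k_t}+\mathbf{1}_{X^k_t<a<X^k_s}$; by the proof of Theorem~\ref{multithm} the function $(s,t)\mapsto(t-s)^{-1-\beta}\int(\mathbf{1}_{X^k_s<a<X^k_t}+\mathbf{1}_{X^k_t<a<X^k_s})\,\mu(\ud a)$ is $\eta$-integrable a.s. Dominated convergence then yields the scalar case, and finally a second application of dominated convergence in the variable $a$ against the compactly supported measures (as in Theorem~\ref{R-Sthm} and \cite{A-V}) lifts this to general $f$, giving $\int_0^T\int_0^t A_2/(t-s)^{1+\beta}\,\ud s\,\ud t\to 0$. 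Combining with $A_1$ gives $\lVert h_n\rVert_{2,\beta}\to 0$ a.s.

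I expect the bookkeeping in the $A_2$ step to be the main obstacle: one must expand the fourfold alternating sum of coordinate-wise indicators into its many sign patterns, single out those that are eventually zero by Hölder continuity on the shrinking subintervals, and check that each surviving pattern admits the stated dominating function uniformly in $n$, with that dominating function remaining $\eta$-integrable after integration against the frozen-coordinate measures --- the latter being exactly the estimate produced in the proof of Theorem~\ref{multithm}. The remaining ingredients (the choice of $\beta$, the localisation, and the two dominated-convergence passages) are routine repetitions of arguments already carried out in the paper.
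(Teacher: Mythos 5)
The paper offers no written proof of this theorem---it merely states that the result follows by combining Theorem \ref{R-Sthm} with the arguments of Theorem \ref{multithm} and leaves the details to the reader---and your proposal is precisely that combination (localisation, reduction via Theorem \ref{t:n-r} to $\lVert h_n\rVert_{2,\beta}\to 0$, coordinate telescoping with frozen-coordinate measures from representation (\ref{rep_convex_aux}), and the indicator-pattern domination of Theorem \ref{R-Sthm}), so it is the intended argument. The one point deserving extra care, which you correctly single out as the main bookkeeping obstacle, is that in the off-diagonal term the frozen coordinates (hence the measures $\mu$) are evaluated at partition points $t_i,t_j$ rather than at $t,s$, so the dominating function must be checked to be $\eta$-integrable uniformly in $n$; this is supplied by the same estimate established in the proof of Theorem \ref{multithm}.
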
 

%%%%%%%%%%%%%%%%%%%%%%%%%%%%%%%%%%%%%%%%%%%%%
\subsection{Change of variable formula}
As a direct corollary we obtain the following change of variable formula.
\begin{thm}
\label{thm:change_of_variable}
Let $X\in BV_iC^{\alpha}$ and $Y\in C^{\gamma}$ with $\alpha+\gamma>1$. Then the integral 
$$
\int_0^T Y_u \ud X_u
$$
exists as a limit of Riemann-Stieltjes sums. Moreover, we have the following change of variable formula:
$$
\int_0^T X_u \ud Y_u = Y_T X_T - Y_0X_0 - \int_0^T Y_u \ud X_u.
$$
\end{thm}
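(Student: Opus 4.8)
The plan is to deduce the result from Theorem \ref{R-Sthm} together with the elementary summation-by-parts identity. First I would note that $\int_0^T X_u\,\ud Y_u$ is already available: since $X\in BV_iC^{\alpha}$, $Y\in C^{\gamma}$ and $\alpha+\gamma>1$, Theorem \ref{R-Sthm} applied with integrand $Z=X$ and integrator $Y$ shows that this integral exists and equals the almost sure limit of Riemann--Stieltjes sums along any partition sequence with vanishing mesh (and, by Theorem \ref{mainthm}, it agrees with the gLS integral). Hence the only genuinely new object is $\int_0^T Y_u\,\ud X_u$, and the plan is to \emph{obtain} it via the particular Riemann--Stieltjes sums forced by the parts formula and then verify consistency.

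Concretely, for an arbitrary partition $\pi_n=\{0=t_0^n<\ldots<t_{k(n)}^n=T\}$ I would use the purely algebraic (Abel) identity
\begin{equation*}
X_T Y_T - X_0 Y_0 = \sum_{i=1}^{k(n)} X_{t_{i-1}^n}\bigl(Y_{t_i^n}-Y_{t_{i-1}^n}\bigr) + \sum_{i=1}^{k(n)} Y_{t_i^n}\bigl(X_{t_i^n}-X_{t_{i-1}^n}\bigr),
\end{equation*}
whose left-hand side does not depend on $\pi_n$. The first sum on the right is a Riemann--Stieltjes sum for $\int_0^T X_u\,\ud Y_u$ with the left-endpoint tagging $t_i:=t_{i-1}^n\in[t_{i-1}^n,t_i^n]$, so by Theorem \ref{R-Sthm} it converges almost surely to $\int_0^T X_u\,\ud Y_u$ as $|\pi_n|\to 0$. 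Consequently the second sum converges almost surely to $X_T Y_T - X_0 Y_0 - \int_0^T X_u\,\ud Y_u$; since the second sum is exactly a Riemann--Stieltjes sum for $\int_0^T Y_u\,\ud X_u$ (with $Y$ evaluated at the right endpoint of each subinterval), this simultaneously establishes that $\int_0^T Y_u\,\ud X_u$ exists as a limit of Riemann--Stieltjes sums and that the stated change of variable formula holds. That the limit is independent of the partition sequence is inherited from the corresponding statement in Theorem \ref{R-Sthm}.

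The point requiring attention — and the obstacle to any statement allowing \emph{arbitrary} evaluation points — is that $X\in BV_iC^{\alpha}$ need not be of bounded $p$-variation for any $p$, so one cannot control $\sum_i|X_{t_i^n}-X_{t_{i-1}^n}|$ uniformly in $n$, which is precisely why the right-endpoint tagging dictated by the parts identity is the natural one here. If one wished to permit tags $\xi_i\in[t_{i-1}^n,t_i^n]$ for $Y$ as well, one would have to estimate $\sum_i(Y_{\xi_i}-Y_{t_i^n})(X_{t_i^n}-X_{t_{i-1}^n})$, which by $\gamma$-H\"older continuity of $Y$ is at most $C_Y(\omega)\,|\pi_n|^{\gamma}$ times the $\pi_n$-variation of $X$; controlling this product (using $\alpha+\gamma>1$ and occupation-type bounds in the spirit of the proof of Theorem \ref{thm:main} and of the concluding dominated-convergence argument in Theorem \ref{R-Sthm}, see also \cite{A-V}) is the only nontrivial ingredient, while the remaining steps are routine bookkeeping.
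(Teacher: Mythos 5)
Your argument for the formula itself is fine and is essentially the same route the paper takes: the paper's proof is just ``classical integration by parts for Riemann--Stieltjes integrals (Hille--Phillips) plus Theorem \ref{R-Sthm}'', and the classical proof of that parts theorem is exactly your Abel summation. However, as you state the argument you only obtain convergence of the sums $\sum_i Y_{t_i^n}\bigl(X_{t_i^n}-X_{t_{i-1}^n}\bigr)$ with the \emph{right-endpoint} tagging, whereas the assertion ``exists as a limit of Riemann--Stieltjes sums'' (and the classical theorem being invoked) means convergence for arbitrary tags $\xi_i\in[t_{i-1}^n,t_i^n]$, as in Theorem \ref{R-Sthm}. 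You acknowledge this, but the repair you sketch is a dead end: bounding $\sum_i(Y_{\xi_i}-Y_{t_i^n})(X_{t_i^n}-X_{t_{i-1}^n})$ by $C_Y(\omega)\,|\pi_n|^{\gamma}$ times the $1$-variation of $X$ along $\pi_n$ cannot work, because for $X\in BV_iC^{\alpha}$ the absolute increments $\sum_i|X_{t_i^n}-X_{t_{i-1}^n}|$ typically diverge -- indeed $X$ is generally of unbounded $p$-variation for every $p$, which is precisely the point emphasized in the remark following this theorem. So the ``only nontrivial ingredient'' you defer is genuinely nontrivial along that route, and no occupation-time bound of the type used in Theorem \ref{thm:main} rescues a product of the form $|\pi_n|^{\gamma}\cdot v_1(X;\pi_n)$.

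The correct completion is the standard refinement trick, which needs no new estimate: given arbitrary tags $\xi_i$, Abel summation gives
\begin{equation*}
\sum_{i=1}^{k(n)} Y_{\xi_i}\bigl(X_{t_i^n}-X_{t_{i-1}^n}\bigr)
= X_T Y_T - X_0 Y_0 - \sum_{j} X_{\tau_j}\bigl(Y_{s_j}-Y_{s_{j-1}}\bigr),
\end{equation*}
where the new partition $\{s_j\}$ consists of the points $\{0,\xi_1,\ldots,\xi_{k(n)},T\}$ and the tags $\tau_j$ are the original partition points $t_i^n$, which lie in the corresponding subintervals $[\xi_i,\xi_{i+1}]$; the mesh of $\{s_j\}$ is at most $2|\pi_n|$. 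Since Theorem \ref{R-Sthm} holds for \emph{any} partition sequence with vanishing mesh and \emph{arbitrary} tags, the right-hand sum converges almost surely to $\int_0^T X_u\,\ud Y_u$, and hence the left-hand sums converge, with arbitrary tags, to $X_TY_T-X_0Y_0-\int_0^T X_u\,\ud Y_u$. In other words, the troublesome tag-shift term should never be estimated: arbitrary tags for $\int Y\,\ud X$ are absorbed by turning them into partition points for $\int X\,\ud Y$, which is exactly how the classical functional-analytic parts theorem cited by the paper handles them. With that substitution your proof becomes complete and matches the paper's.
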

\begin{proof}
The existence of the integral and the integration by parts formula are direct consequences of the classical results of functional analysis (see, e.g. \cite{h-p}) combined with Theorem \ref{R-Sthm}.
\end{proof}
\begin{rmk}
To the best of our knowledge, Theorem \ref{thm:change_of_variable} above is one of the first results related to integration with respect to processes of unbounded $p$-variation for every $p\geq 1$. The intuition behind the result is that while the process $X\in BV_iC^\alpha$ is usually of unbounded $p$-variation, the differences $X_{t_k}-X_{t_{k-1}}$ can be either positive or negative, and the compensation leads to finite value of the integral. If one considers the absolute values of the differences, then the Riemann-Stieltjes sum usually diverges. This is studied in details in \cite{azmoodeh} in the case of fractional Brownian motion. 
\end{rmk}

\section{Discussion}
\label{sec:dis}
In this paper we have given some new results related to pathwise stochastic integration for cases when the integrand or the integrator is of unbounded $p$-variation for every $p\geq 1$, and consequently standard results of Young or rough path analysis introduced by Lyons \cite{lyons} cannot be applied. While such results have some important consequences in mathematical finance, it is also worthwhile to note that discontinuities are also present in many fields of other practical applications and the theory should be developed more. For example, in the literature the theory of pathwise (stochastic) integration and especially the theory of stochastic differential equations is mainly based on certain regularity assumptions for the coefficients and the driving process. In particular, most of the literature is focused on the theory of H\"older continuous processes or processes with finite $p$-variation. 

It is also worth of mentioning that in this paper we have, to some extend, concluded the research initiated in \cite{AMV} where similar techniques was applied in the particular case of fractional Brownian motion. More precisely, we have extended the results presented in \cite{AMV} and proved that, together with a careful examination of the proofs together with some ''shortcuts'', same techniques can be applied to any suitably regular processes instead of only fractional Brownian motion (or more general Gaussian processes as in \cite{V-S}). However, for our results we inherited an assumption from the Gaussian case that the underlying process has continuous distribution and a density. As a first extension of our result it would be interesting to study the case where the underlying process has suitably regular path and the corresponding measure has atoms. Moreover, there exists no results corresponding to our case for irregular processes such as H\"older processes of order less than $\frac{1}{2}$ and it is clear that our approach is not fitting for such cases.  

As a final conclusion we remark that our results may have some further implications to mathematical finance. It is clear from the It\^o formula \ref{thmito_convex} that it is easy to construct arbitrage in a model with sufficiently regular driving process with a simple buy-and-sell strategy. Moreover, one can construct strong arbitrage in this case provided that the underlying process satisfies some small deviation estimates (see \cite{m-s-v,she-vii,she-vii2}). On the other hand, it is nowadays well-known that even with geometric fractional Brownian motion the arbitrage disappears under transaction costs \cite{guasoni}. Comparing to our results in section \ref{sec:rs}, we now know that integrals with respect to processes of unbounded variation can be defined which may give rise to some further interesting problems in finance.

\subsection*{Acknowledgments}
Z. Chen was financed by the Finnish Doctoral Programme in Stochastics and Statistics. Both authors thank Ehsan Azmoodeh for his valuable comments which improved the paper.
%%%%%%%%%%%%%%%%%%%%%%%%%%%%%%%%%%%%%%%%%%%%%%%%%%%%%%%

%%%%%%%%%%%%%%%%%%%%%%%%%%%%%%%%%%%%%%%%%%%%%
\bibliographystyle{plain}      % mathematics and physical sciences
\bibliography{bibli_p2}   

\end{document}